\documentclass[a4paper]{article}

\usepackage[utf8]{inputenc}
\usepackage[T1]{fontenc}
\usepackage{amsmath}
\usepackage{amsthm}
\usepackage{amsfonts}
\usepackage{mathtools}
\usepackage{xypic}
\usepackage{tikz}
\usepackage{tikz-cd}
\usepackage{subcaption}
\usepackage{float}
\usepackage{natbib}
\usepackage{import}
\usepackage{hyperref}
\usepackage{import} 

\newtheorem{theorem}{Theorem}
\newtheorem{definition}{Definition}
\newtheorem{lemma}{Lemma}

\DeclareMathOperator{\Ob}{Ob}
\DeclareMathOperator{\Mor}{Mor}
\DeclareMathOperator{\dom}{dom}
\DeclareMathOperator{\cod}{cod}

\newcommand{\C}{\mathcal{C}}
\newcommand{\D}{\mathcal{D}}

\usepackage{authblk}

\title{Sheet diagrams for bimonoidal categories}
\author[1]{Cole Comfort}
\author[1]{Antonin Delpeuch}
\author[2]{Jules Hedges}
\affil[1]{Department of Computer Science, University of Oxford}
\affil[2]{Department of Computer and Information Sciences, University of Strathclyde}

\begin{document}
\maketitle

\begin{abstract}
	Bimonoidal categories (also known as rig categories) are categories with two monoidal structures, one of which distributes over the other. We formally define sheet diagrams, a graphical calculus for bimonoidal categories that was informally
introduced by Staton. Sheet diagrams are string diagrams drawn on a branching surface, which is itself an extruded string diagram. Our main result is a soundness and completeness theorem of the usual form for graphical calculi: we show that sheet diagrams
form the free bimonoidal category on a signature.
\end{abstract}

\section{Introduction}

String diagrams for monoidal categories are graphical representations of morphisms in monoidal categories ~\citep{hotz1965algebraisierung-1,joyal1991geometry-1}.    The equational theory for monoidal categories corresponds to certain permissible
topological transformations from one diagram to another,  so that one diagram can be deformed into another if and only if they are equal up to the axioms of monoidal categories.  Thus one can reason about monoidal categories graphically, by deforming the corresponding string diagrams.

Monoidal categories are common in applied settings, usually as a minimal theory of processes, where the categorical composition and monoidal product of morphisms are respectively thought of as sequential and parallel composition of processes (see for example \citep{fong_spivak_seven_sketches,coecke_kissinger_picturing_quantum_processes}). String diagrams have the simultaneous advantages that they are so intuitive that beginners can pick them up without even having knowledge of category theory, while also being entirely formal thanks to the soundness and completeness theorems which relate deformations to axiomatic reasoning with algebraic expressions.\footnote{Such results are sometimes called \emph{coherence theorems}, but we reserve this term to results about commutation of all diagrams of a certain sort.} String diagrams have been defined for several more refined notions of monoidal categories, a survey of which can be found in~\cite{selinger2010survey-1}.

However convenient this syntax might be, it is limited by the expressivity of the monoidal structure. 
It is common to work in settings where two monoidal structures are used; however, monoidal string diagrams are by definition a syntax for one particular monoidal structure.  And a priori, there is no reason why one might expect that more than one monoidal structures interact well with each other.  However, there are certain instances when there is some sort of useful interaction between both monoidal structures.

For the purpose of this paper we focus our attention to a particular type of distributivity between two monoidal structures on a category.
Bimonoidal categories (also known as rig categories) are categories with a monoidal structure $\otimes$ and a symmetric monoidal structure $\oplus$,  with natural  isomorphisms $$\delta_{A,B,C}:A \otimes(B\oplus C) \to (A\otimes B) \oplus (A\otimes C)$$
$$\delta_{A,B,C}^\#: (A\oplus B) \otimes C \to (A \otimes C)\oplus (B\otimes C)$$ called \emph{distributors}, distributing $\otimes$ over $\oplus$ from the left and the right, satisfying certain coherence laws. Many well-known categories have such a structure, for example $\mathbf{Set}$ with disjoint unions and cartesian products, or $\mathbf{Vect}$ with direct sums and tensor products. 
Some informal attempts have been made at drawing string diagrams for such categories.  Consider for instance the following linear map, a morphism in $\mathbf{Vect}$:
$$A \xrightarrow{f} (B \otimes C) \oplus (B \otimes C) \xrightarrow{1_{B \otimes B} \oplus \gamma_{B,C}} (B \otimes C) \oplus (C \otimes B) \xrightarrow{g} D \otimes E$$
where $\gamma_{B,C} : B \otimes C \rightarrow C \otimes B$ is the symmetry for $\otimes$.
Authors have used various informal conventions to represent such a morphism as a diagram:
\begin{figure}[H]
  \centering
  \begin{subfigure}{0.32\textwidth}
    \centering
    \begin{tikzpicture}
  \node[rectangle,draw] at (0,0) (f) {$f$};
  \node[rectangle,draw] at (0,2) (g) {$g$};
  
  \node at (0,-.75) (A) {$A$};
  \node at (-.4,2.75) (D) {$D$};
  \node at (.4,2.75) (E) {$E$};

  \draw (g) -- (D);
  \draw (g) -- (E);
  \draw (f) -- (A);

  \node at (-1,0) (p0) {};
  \node at (0,0) (pmid) {};
  \node at (1,0) (pend) {};

  \node at (-.6,0) (p1) {};
  \node at (-.4,0) (p2) {};
  \node at (.4,0) (p3) {};
  \node at (.6,0) (p4) {};
  \draw[gray,dashed] (p0 |- f.north) rectangle (pmid |- g.south);
  \draw[gray,dashed] (pmid |- g.south) -- (pend |- g.south) -- (pend |- f.north) -- (pmid |- f.north);
  
  \draw (p1 |- f.north) -- (p1 |- g.south);
  \draw (p2 |- f.north) -- (p2 |- g.south);

  \draw (p3 |- f.north) -- (p4 |- g.south);
  \draw (p4 |- f.north) -- (p3 |- g.south);

\end{tikzpicture}
    \caption{\cite{duncan2009generalized}}
  \end{subfigure}
  \begin{subfigure}{0.32\textwidth}
    \centering
    \begin{tikzpicture}
  \node[rectangle,draw,minimum width=1.5cm] at (0,0) (f) {$f$};
  \node[rectangle,draw,minimum width=1.5cm] at (0,2) (g) {$g$};
  
  \node at (0,-.75) (A) {$A$};
  \node at (-.4,2.75) (D) {$D$};
  \node at (.4,2.75) (E) {$E$};

  \draw (g) -- (D);
  \draw (g) -- (E);
  \draw (f) -- (A);

  \node at (0,1) (p) {$+$};
  \node at (-.6,0) (p1) {};
  \node at (-.4,0) (p2) {};
  \node at (.4,0) (p3) {};
  \node at (.6,0) (p4) {};
  \draw (p1 |- f.north) -- (p1 |- g.south);
  \draw (p2 |- f.north) -- (p2 |- g.south);

  \draw (p3 |- f.north) -- (p4 |- g.south);
  \draw (p4 |- f.north) -- (p3 |- g.south);

\end{tikzpicture}
    \caption{\cite{james2012information}}
  \end{subfigure}
  \begin{subfigure}{0.32\textwidth}
    \centering
    \def\svgscale{0.65}
    \small
\begingroup%
  \makeatletter%
  \providecommand\color[2][]{%
    \errmessage{(Inkscape) Color is used for the text in Inkscape, but the package 'color.sty' is not loaded}%
    \renewcommand\color[2][]{}%
  }%
  \providecommand\transparent[1]{%
    \errmessage{(Inkscape) Transparency is used (non-zero) for the text in Inkscape, but the package 'transparent.sty' is not loaded}%
    \renewcommand\transparent[1]{}%
  }%
  \providecommand\rotatebox[2]{#2}%
  \newcommand*\fsize{\dimexpr\f@size pt\relax}%
  \newcommand*\lineheight[1]{\fontsize{\fsize}{#1\fsize}\selectfont}%
  \ifx\svgwidth\undefined%
    \setlength{\unitlength}{73.94453384bp}%
    \ifx\svgscale\undefined%
      \relax%
    \else%
      \setlength{\unitlength}{\unitlength * \real{\svgscale}}%
    \fi%
  \else%
    \setlength{\unitlength}{\svgwidth}%
  \fi%
  \global\let\svgwidth\undefined%
  \global\let\svgscale\undefined%
  \makeatother%
  \begin{picture}(1,2.22659511)%
    \lineheight{1}%
    \setlength\tabcolsep{0pt}%
    \put(0,0){\includegraphics[width=\unitlength,page=1]{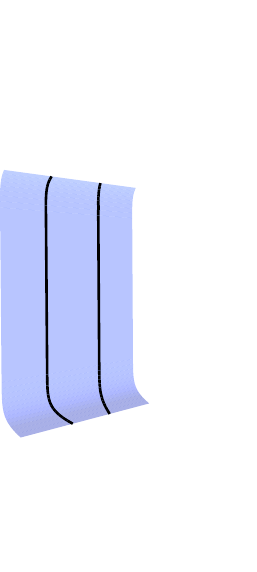}}%
    \put(0.62109997,2.05919014){\color[rgb]{0,0,0}\makebox(0,0)[t]{\lineheight{1.25}\smash{\begin{tabular}[t]{c}$E$\end{tabular}}}}%
    \put(0,0){\includegraphics[width=\unitlength,page=2]{intro.pdf}}%
    \put(0.53988178,0.01030122){\color[rgb]{0,0,0}\makebox(0,0)[t]{\lineheight{1.25}\smash{\begin{tabular}[t]{c}$A$\end{tabular}}}}%
    \put(0,0){\includegraphics[width=\unitlength,page=3]{intro.pdf}}%
    \put(0.53951614,0.58312905){\color[rgb]{0,0,0}\makebox(0,0)[t]{\lineheight{1.25}\smash{\begin{tabular}[t]{c}$f$\end{tabular}}}}%
    \put(0,0){\includegraphics[width=\unitlength,page=4]{intro.pdf}}%
    \put(0.53272832,1.84326998){\color[rgb]{0,0,0}\makebox(0,0)[t]{\lineheight{1.25}\smash{\begin{tabular}[t]{c}$g$\end{tabular}}}}%
    \put(0,0){\includegraphics[width=\unitlength,page=5]{intro.pdf}}%
    \put(0.43140699,2.14279866){\color[rgb]{0,0,0}\makebox(0,0)[t]{\lineheight{1.25}\smash{\begin{tabular}[t]{c}$D$\end{tabular}}}}%
  \end{picture}%
\endgroup%

    \caption{\cite{staton2015algebraic}}
  \end{subfigure}
\end{figure}
These conventions all communicate the structure of a morphism to readers, but do not a priori enjoy a soundness and completeness theorem.
In this paper we develop the formal theory of diagrams for bimonoidal categories (also known as rig categories, semiring categories). We provide a definition of the class of diagrams (that we call \emph{sheet diagrams}), their deformations and a soundness and completeness theorem for them. Our sheet diagrams follow the three-dimensional style used by \cite{staton2015algebraic} and \cite{delpeuch2019complete-1}, retrospectively justifying their use as formal reasoning tools.

Sheet diagrams represent morphisms in a bimonoidal category in a normal form, as a ``sum of products'' with $\otimes$ pushed to the inside and $\oplus$ pushed to the outside by the distributors, similar to the normal form of a polynomial, or disjunctive normal form in logic. This is intended as a compromise, making sheet diagrams both (in the authors' opinions) easier to visualise and also easier to typeset, in return for which the tensor product of sheet diagrams is a rather complicated derived operation.

Bimonoidal categories have found applications in a variety of fields:
probability theory~\citep{fritz2018bimonoidal}, quantum
information~\citep{staton2015algebraic}, dataflow
computations~\citep{delpeuch2019complete-1}, game theory~\citep{hedges_morphisms_open_games}, and reversible
computation~\citep{james2012information}. They are also studied in
K-theory~\citep{guillou2009strictification,gomez2009fibered}. Sheet
diagrams could potentially be used in each of these fields, but one
important obstacle for using these diagrams is the difficulty
of typesetting and manipulating them.  We introduce a web-based tool called
SheetShow,\footnote{Available at \url{https://wetneb.github.io/sheetshow/} (web app) and \url{https://github.com/wetneb/sheetshow} (source code)} which
renders sheet diagrams as vector graphics based on a purely
combinatorial description of their topology.  We give an overview of
the data structures of this tool in Appendix~\ref{app:sheetshow}, which are based on work by~\cite{delpeuch2018normalization-1}.

\section{Acknowledgements}

The authors would like to thank Sam Staton, Jamie Vicary, Lê Thành Dung Nguyên and Spencer Breiner for their insightful comments on this project, Bill Dwyer for his vector 3D graphics renderer \texttt{seenjs} and Jan Vaillant for his Javascript port of the GNU Linear Programming Kit. Antonin Delpeuch is supported by an EPSRC scholarship.

\section{Bimonoidal categories}

\begin{definition} \label{def:bimonoidal}
  A \emph{bimonoidal category}, or \emph{rig category}, is a category $\mathcal{C}$
  with a monoidal structure $(\mathcal{C}, \cdot, I)$ and a symmetric monoidal structure $(\mathcal{C}, \oplus, O)$
  with natural isomorphisms called the \emph{left and right distributors}:
  \begin{align*}
  \delta_{A,B,C} : A (B \oplus C) \rightarrow AB \oplus AC \\
  \delta^{\#}_{A,B,C} : (A \oplus B) C \rightarrow AC \oplus BC
  \end{align*}
  and isomorphisms called the \emph{left and right annihilator}:
  \begin{align*}
    \lambda^*_A : O A \rightarrow O \\
    \rho^*_A : A O \rightarrow O 
  \end{align*}
  satisfying the coherence conditions given in Appendix~\ref{app:coherence-axioms}.
\end{definition}

For instance, $\mathbf{Set}$ is bimonoidal when equipped with the monoidal structures $(\mathbf{Set}, \times, \{ \star \})$ and $(\mathbf{Set}, \sqcup, \emptyset)$, where $\times$ is the cartesian product and $\sqcup$ is the disjoint union.
Similarly, $\mathbf{Vect}_k$ (the category of vector spaces on a field $k$) is bimonoidal for $(\mathbf{Vect}, \otimes, k)$ and $(\mathbf{Vect}, \oplus, O)$.

One important question when defining categorical structures such as
this one is whether they satisfy coherence. By coherence, we mean that
any two parallel morphisms generated by the structural isomorphisms
$\delta, \delta^{\#}, \lambda^*, \rho^*$ and those of the monoidal
structures are equal.  For bimonoidal categories, this unfortunately
does not hold. The symmetry of $(\mathcal{C}, \oplus, O)$ makes
this impossible, as the isomorphisms $\gamma_{A,A}$ and $1_A \oplus 1_A$
are parallel and distinct.

However, a restricted form of coherence for bimonoidal categories was proved by
\citet{laplaza1972coherence}. It applies when the domain (or
equivalently codomain) of the parallel pair of morphisms is
\emph{regular}, which essentially means that no two occurrences of the
same object generator can be swapped by a symmetry. We will make this
precise in Section~\ref{sec:bimonoidal-signatures} as
Theorem~\ref{thm:regular-coherence}.
We finish this section with a semi-strictification theorem for bimonoidal categories.
\begin{definition}
  A bimonoidal category is \emph{left-semistrict} (resp. \emph{right-semistrict}) if both monoidal
  structures are strict and $\delta^\#_{A,B,C}$ (resp. $\delta_{A,B,C}$) is an identity for all $A, B, C$. It is \emph{semistrict} if it is either left- or right-semistrict.
\end{definition}

\begin{theorem}[{\citealp{guillou2009strictification}}] \label{thm:semi-strictification}
  Any bimonoidal category is equivalent to a semistrict one.
\end{theorem}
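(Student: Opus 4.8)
The plan is to build, for a given bimonoidal category $\mathcal{C}$, an explicit equivalent category $\mathcal{C}_{\mathrm{ss}}$ whose objects are \emph{sum-of-products normal forms}, in the spirit of the word construction that proves every monoidal category is equivalent to a strict one. The virtue of normal forms is that a single construction can strictify both monoidal structures \emph{and} force one distributor to be an identity at the same time: by fixing once and for all a convention for how a product of sums is expanded, the right distributor $\delta^\#$ will hold on the nose, so that $\mathcal{C}_{\mathrm{ss}}$ is left-semistrict, and the equivalence $\mathcal{C}_{\mathrm{ss}}\simeq\mathcal{C}$ then proves the theorem.

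Concretely, I would take the objects of $\mathcal{C}_{\mathrm{ss}}$ to be finite lists of \emph{monomials}, each monomial being itself a finite list of objects of $\mathcal{C}$; such an object $(M_1,\dots,M_n)$ is read as the formal sum $\bigoplus_i \bigotimes_j M_{ij}$. I let $\oplus$ be concatenation of the outer lists and let $\otimes$ send $(M_i)$ and $(N_j)$ to the list of all concatenations $M_i N_j$ taken in lexicographic order, inner concatenation being the product of monomials. Since concatenation of lists is strictly associative and unital, both $\oplus$ and $\otimes$ become strict monoidal, while the symmetry of $\oplus$ survives as a genuine permutation of lists. A direct inspection shows that $(U\oplus V)\otimes W$ and $(U\otimes W)\oplus(V\otimes W)$ are the \emph{same} list, so $\delta^\#$ can be declared to be the identity; by contrast $U\otimes(V\oplus W)$ and $(U\otimes V)\oplus(U\otimes W)$ differ by a nontrivial reordering, so $\delta$ remains a real symmetry. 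This asymmetry is not an accident: the symmetry of $\oplus$ makes it impossible to strictify both distributors at once (the two expansions of $(U\oplus V)\otimes(W\oplus X)$ differ by a transposition), which is exactly the failure of full coherence noted before Theorem~\ref{thm:regular-coherence}.

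I would then take a morphism $U \to V$ in $\mathcal{C}_{\mathrm{ss}}$ to be a morphism $\mathrm{ev}(U)\to\mathrm{ev}(V)$ of $\mathcal{C}$, where $\mathrm{ev}$ evaluates a normal form into $\mathcal{C}$ using a fixed bracketing of its sums and products. Composition is composition in $\mathcal{C}$, so $\mathcal{C}_{\mathrm{ss}}$ is equivalent to $\mathcal{C}$ as a bare category via $\mathrm{ev}$ and the functor $A \mapsto ((A))$. To upgrade this to an equivalence of bimonoidal categories I would equip $\mathrm{ev}$ with comparison isomorphisms $\mathrm{ev}(U\oplus V)\cong \mathrm{ev}(U)\oplus\mathrm{ev}(V)$ and $\mathrm{ev}(U\otimes V)\cong \mathrm{ev}(U)\otimes\mathrm{ev}(V)$ assembled from the associators, unitors, distributors and annihilators of $\mathcal{C}$ (Definition~\ref{def:bimonoidal}), and use these to transport the functorial structure of $\oplus$ and $\otimes$ back to $\mathcal{C}_{\mathrm{ss}}$, just as in the classical strictification.

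The main obstacle is the coherence bookkeeping: I must check that these comparison isomorphisms satisfy all the bimonoidal functor axioms, and in particular that declaring $\delta^\#$ to be an identity in $\mathcal{C}_{\mathrm{ss}}$ is consistent, i.e.\ that the composite of structural isomorphisms of $\mathcal{C}$ to which $\mathrm{ev}$ sends it fits coherently with everything else. Rather than verifying the Laplaza axioms one by one, I would discharge each such equality by appealing to regular coherence (Theorem~\ref{thm:regular-coherence}): the diagrams involved are built from formally distinct object variables standing for arbitrary objects of $\mathcal{C}$, so their domains are regular, any two parallel composites of structural morphisms then automatically agree, and the equalities hold for all actual objects by naturality. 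The delicate part is arranging the construction so that every required comparison really is a parallel pair of structural morphisms with regular domain, after which the verification becomes mechanical.
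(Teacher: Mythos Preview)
The paper does not prove this theorem; it is quoted from Guillou's strictification paper and used as a black box, so there is no ``paper's own proof'' to compare against. That said, your outline is essentially the argument Guillou gives: objects of the strictified category are formal polynomials (lists of monomials, each a list of objects of $\mathcal{C}$), both products act by concatenation and are therefore strictly associative and unital, the lexicographic convention for expanding a product of sums makes $(U\oplus V)\otimes W$ and $(U\otimes W)\oplus(V\otimes W)$ literally the same list so that $\delta^\#$ is an identity, and morphisms are transported from $\mathcal{C}$ along an evaluation functor. Your observation that $\delta$ cannot simultaneously be made an identity, and your explanation why, are also correct and match the known obstruction.

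Your plan to discharge the coherence bookkeeping via Laplaza's regular coherence is sound and is exactly how the later parts of the present paper handle analogous verifications (see the proof that $D(\Sigma)$ is bimonoidal). One caveat worth making explicit: Laplaza's theorem as stated here (Theorem~\ref{thm:regular-coherence}) is about the free bimonoidal category $\overline{\Sigma}$, so what you are really using is that a diagram of structural isomorphisms which commutes for formally distinct generators in $\overline{\Sigma}$ commutes in every bimonoidal category $\mathcal{C}$ after substitution, by the universal property of $\overline{\Sigma}$. You allude to this with ``the equalities hold for all actual objects by naturality,'' and it would be worth spelling out that step. With that understood, the proposal is a correct sketch of the standard proof.
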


\section{Symmetric monoidal string diagrams}

\begin{figure}
  \centering

  \begin{subfigure}{0.4\textwidth}
    \centering
    \vspace{0.7cm}
    \def\svgscale{0.7}
    \small
    \import{figures/}{symmetric_monoidal.pdf_tex}
    \vspace{0.8cm}
    \label{fig:example-symmetric-diagram}
  \end{subfigure}
  \begin{subfigure}{0.4\textwidth}
    \centering
    \def\svgscale{0.7}
    \small
\begingroup%
  \makeatletter%
  \providecommand\color[2][]{%
    \errmessage{(Inkscape) Color is used for the text in Inkscape, but the package 'color.sty' is not loaded}%
    \renewcommand\color[2][]{}%
  }%
  \providecommand\transparent[1]{%
    \errmessage{(Inkscape) Transparency is used (non-zero) for the text in Inkscape, but the package 'transparent.sty' is not loaded}%
    \renewcommand\transparent[1]{}%
  }%
  \providecommand\rotatebox[2]{#2}%
  \newcommand*\fsize{\dimexpr\f@size pt\relax}%
  \newcommand*\lineheight[1]{\fontsize{\fsize}{#1\fsize}\selectfont}%
  \ifx\svgwidth\undefined%
    \setlength{\unitlength}{86.38406324bp}%
    \ifx\svgscale\undefined%
      \relax%
    \else%
      \setlength{\unitlength}{\unitlength * \real{\svgscale}}%
    \fi%
  \else%
    \setlength{\unitlength}{\svgwidth}%
  \fi%
  \global\let\svgwidth\undefined%
  \global\let\svgscale\undefined%
  \makeatother%
  \begin{picture}(1,1.98379578)%
    \lineheight{1}%
    \setlength\tabcolsep{0pt}%
    \put(0,0){\includegraphics[width=\unitlength,page=1]{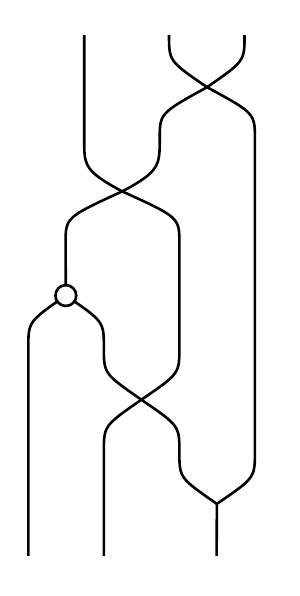}}%
    \put(0.30632618,0.99846356){\makebox(0,0)[lt]{\lineheight{1.25}\smash{\begin{tabular}[t]{l}$f$\end{tabular}}}}%
    \put(0,0){\includegraphics[width=\unitlength,page=2]{isomorphic_symmetric_monoidal.pdf}}%
    \put(0.80989128,0.29088419){\makebox(0,0)[lt]{\lineheight{1.25}\smash{\begin{tabular}[t]{l}$g$\end{tabular}}}}%
    \put(0.72031031,0.01796286){\makebox(0,0)[t]{\lineheight{1.25}\smash{\begin{tabular}[t]{c}$C$\end{tabular}}}}%
    \put(0.34990531,0.01609689){\makebox(0,0)[t]{\lineheight{1.25}\smash{\begin{tabular}[t]{c}$B$\end{tabular}}}}%
    \put(0.09329927,0.01531248){\makebox(0,0)[t]{\lineheight{1.25}\smash{\begin{tabular}[t]{c}$A$\end{tabular}}}}%
    \put(0.28067006,1.89580963){\makebox(0,0)[t]{\lineheight{1.25}\smash{\begin{tabular}[t]{c}$B$\end{tabular}}}}%
    \put(0.56210437,1.90022427){\makebox(0,0)[t]{\lineheight{1.25}\smash{\begin{tabular}[t]{c}$B$\end{tabular}}}}%
    \put(0.81815454,1.90463893){\makebox(0,0)[t]{\lineheight{1.25}\smash{\begin{tabular}[t]{c}$C$\end{tabular}}}}%
  \end{picture}%
\endgroup%

    \label{fig:isomorphic-symmetric-monoidal}
  \end{subfigure}
  \caption{Two isomorphic symmetric monoidal string diagrams}
  \label{fig:example-symmetric-diagrams}
\end{figure}

In this section we briefly recall results on string diagrams for symmetric monoidal categories.  A string diagram in a symmetric monoidal category is given in Figure~\ref{fig:example-symmetric-diagrams}.  The string diagram on the left represents the morphism $(1_B \otimes \gamma_{C,B}) \circ (1_B \otimes f \otimes 1_B) \circ (\gamma_{A,B} \otimes g)$, where $f : A \otimes A \rightarrow C$, $g : C \rightarrow A \otimes B$, where $\gamma_{A,B} : A \otimes B \rightarrow B \otimes A$ is the symmetry.  

\begin{definition}
  A \emph{monoidal signature} $\Sigma$ is given by a set of \emph{object symbols} $\Ob \Sigma$, a set of \emph{morphism symbols} $\Mor \Sigma$ and domain and codomain functions $\dom, \cod : \Mor \Sigma \rightarrow (\Ob \Sigma)^*$, where $(\Ob \Sigma)^*$ is the set of finite words on $\Ob \Sigma $.
\end{definition}

\noindent For instance, we can define a monoidal signature $\Sigma$ as:
\begin{align*}
  \Ob \Sigma &= \{A, B, C\} \\
  \Mor \Sigma  &= \{ f, g \} \\
  \dom(f) &= [A, A ] \\
  \cod(f) &= [ C ] \\
  \dom(g) &= [ C ] \\
  \cod(g) &= [A, B]
\end{align*}

\begin{definition}
  A \emph{symmetric monoidal string diagram} on a symmetric monoidal signature $\Sigma$ is an anchored progressive polarised diagram of $\Sigma$ in the sense of \citet{joyal1991geometry-1}.
\end{definition}

\noindent In the example signature $\Sigma$ above, one can draw the string diagrams of Figure~\ref{fig:example-symmetric-diagrams}.

\begin{definition}
  An isomorphism of symmetric monoidal string diagrams $\phi : D \rightarrow D'$ is an anchored isomorphism of progressive plane diagrams in the sense of \citet{joyal1991geometry-1}.
\end{definition}

The two example diagrams in Figure~\ref{fig:example-symmetric-diagrams} are isomorphic. Given a monoidal signature $\Sigma$, one can form a category $\mathbb{F}_s(\Sigma)$ whose objects are $(\Ob \Sigma)^*$ and morphisms are isomorphism classes of symmetric monoidal string diagrams on $\Sigma$.

\begin{theorem} \citep{joyal1991geometry-1,selinger2010survey-1} \label{thm:joyal-street}
  A well-formed equation between morphisms in the language of symmetric monoidal categories
follows from the axioms of symmetric monoidal categories if and only if it holds, up to
isomorphism of diagrams, in the graphical language. In other words, $\mathbb{F}_s(\Sigma)$ is the free symmetric monoidal category on $\Sigma$.
\end{theorem}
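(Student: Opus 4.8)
The plan is to prove that $\mathbb{F}_s(\Sigma)$ is the free symmetric monoidal category on $\Sigma$, meaning that for any symmetric monoidal category $\D$ and any interpretation of the signature $\Sigma$ in $\D$ (an assignment of objects of $\D$ to object symbols and morphisms of $\D$ to morphism symbols, respecting domains and codomains), there is a unique symmetric monoidal functor $\mathbb{F}_s(\Sigma) \to \D$ extending that interpretation. Since this is a cited result of \citet{joyal1991geometry-1}, I would not reprove it from scratch, but I would organise my own understanding along the following lines.

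First I would fix what ``free'' means categorically: let $\mathbf{Sig}$ be the category of monoidal signatures and let $U : \mathbf{SymMonCat} \to \mathbf{Sig}$ be the forgetful functor sending a symmetric monoidal category to its underlying signature (objects as object symbols, morphisms as morphism symbols). The claim is that $\mathbb{F}_s$ is left adjoint to $U$, witnessed by a unit $\eta_\Sigma : \Sigma \to U\mathbb{F}_s(\Sigma)$ sending each generator to the one-box diagram that names it. Concretely I must produce, for each interpretation $\Sigma \to U\D$, the unique extension to a strict symmetric monoidal functor $\widehat{(-)} : \mathbb{F}_s(\Sigma) \to \D$, and check the universal property.

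The construction of the functor goes by induction on diagram structure. On objects it is forced: a word $[X_1, \dots, X_n]$ must go to $\llbracket X_1 \rrbracket \otimes \cdots \otimes \llbracket X_n \rrbracket$. On morphisms, one decomposes a diagram into elementary layers, each of which is either a box (interpreted by the assigned morphism, padded with identities), or a symmetry (interpreted by $\gamma$), or a wire (identity), and composes and tensors the interpretations accordingly. The two things one must verify are (i) \emph{well-definedness}: the interpretation is invariant under isomorphism of diagrams, so that it descends to isomorphism classes; and (ii) \emph{functoriality plus symmetric monoidal structure}: that $\widehat{(-)}$ respects $\circ$, $\otimes$, identities, and the symmetry, and that it is the unique such functor agreeing with the interpretation on generators.

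The main obstacle is step (i), the topological invariance. The content of the Joyal--Street theorem is precisely that the equational theory of symmetric monoidal categories coincides with the equivalence relation of ``being isotopic/isomorphic as progressive polarised plane diagrams.'' Soundness (isomorphic diagrams denote equal morphisms) is the more delicate half, since one must check that every permissible deformation — sliding boxes past each other along the two independent directions, and the Reidemeister-style moves witnessing naturality and the symmetric-group relations for $\gamma$ — is validated by the coherence axioms of a symmetric monoidal category. Completeness (equal morphisms can be connected by deformations) is obtained by putting diagrams into a layered normal form and showing any two normal forms of equal morphisms are related by the generating moves. Rather than carry out this analysis, I would invoke \citet{joyal1991geometry-1} and \citet{selinger2010survey-1} directly, as the statement does; the purpose of recording the theorem here is to fix $\mathbb{F}_s(\Sigma)$ as the base case on top of which the sheet-diagram calculus for bimonoidal categories will later be built.
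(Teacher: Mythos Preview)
Your proposal is appropriate: the paper does not prove this theorem at all but simply records it as a cited result of \citet{joyal1991geometry-1} and \citet{selinger2010survey-1}, exactly as you conclude in your final paragraph. Your additional sketch of how the freeness argument would run (adjunction with the forgetful functor, layered decomposition, soundness via isotopy invariance, completeness via normal forms) is accurate background but already exceeds what the paper itself supplies.
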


Notice that the objects of the free symmetric monoidal category, $\Ob (\Sigma)^*$, correspond to elements of the free monoid on the object symbols of the signature. (The fact that the set of finite words is the free monoid can be seen as a 1-dimensional analogue of the previous theorem.)

The aim of this article is to provide analogous results for bimonoidal categories: we characterize the free bimonoidal category on a signature as a certain category of diagrams quotiented by certain topological equivalences.

\section{Bimonoidal signatures} \label{sec:bimonoidal-signatures}

Given a set $X$, let $\mathrm{R}(X)$ denote the set of expressions generated by the two binary operators $\cdot$ and $\oplus$, the two nullary symbols $I$ and $O$, and elements of $X$ as nullary symbols.  Call $\mathrm{R}(X)$ the set of bimonoidal object expressions on generators $X$.

\begin{definition} \label{def:bimonoidal-signature}
  A \emph{bimonoidal signature} $\Sigma$ consists of a set $\Ob \Sigma$ of \emph{object symbols}, a set $\Mor \Sigma $ of \emph{morphism symbols}, together with domain and codomain functions $\dom, \cod : \Mor \Sigma \rightarrow R(\Ob \Sigma)$.
  We write $f : \dom(f) \to \cod(f)$.
\end{definition}

One can then give a tautological definition of the free bimonoidal category such a signature generates.

\begin{definition}
  Given a bimonoidal signature $\Sigma$, define the \emph{free bimonoidal category}, $\overline{\Sigma}$, that it generates:  the objects of $\overline{\Sigma}$ are given by $R(\Ob \Sigma)$; the morphisms are equivalence classes of morphism expressions built out of $\Mor \Sigma$, structural isomorphisms from Definition~\ref{def:bimonoidal} and identities quotiented by the axioms of bimonoidal categories.
\end{definition}

This definition is not particularily easy to work with, since morphism expressions can easily get cluttered with structural isomorphisms due to the interplay between the three binary operations involved: $\circ$, $\cdot$ and $\oplus$. By defining sheet diagrams for bimonoidal categories, we will provide a more practical description of $\overline{\Sigma}$, where the bimonoidal axioms are interpreted as topological deformations.

Definition~\ref{def:bimonoidal-signature} is the most general form of a bimonoidal signature, but again not the most convenient to work with. Just like monoidal signatures normalize the domains and codomains of their morphism symbols by forgetting the
bracketing of their products, we will introduce a similar normalization for bimonoidal signatures. First, by Theorem~\ref{thm:semi-strictification} we can assume up to equivalence that both the additive and multiplicative monoidal structures are strict. We
therefore omit the associators and unitors in what follows.

\begin{definition} \label{def:normalized}
  An expression $\phi \in \mathrm{R}(X)$ is \emph{normalized} when it is a sum of products of generators (elements of $X$). Each summand can have no factors (in which case the summand is simply $I$) and the sum can have no summands, in which case $\phi = O$.
  A bimonoidal signature is normalized when all its domains and codomains of morphism symbols are normalized.  
\end{definition}

For any bimonoidal signature, we want to define a normalized version of it. This requires a bit more care than in the monoidal case because there can be multiple ways to normalize an object expression. For instance, the expression $(A \oplus B)(C \oplus D)$ is equivalent to two normalized forms: $AC \oplus AD \oplus BC \oplus BD$ and $AC \oplus BC \oplus AD \oplus BD$.

\begin{definition}
  For any expression $A \in \mathrm{R}(X)$ we define its normal
  form $N(A) = \bigoplus_i A_i$ where $A_i$ are products, by induction:
  \begin{itemize}
  \item $N(O) = O$
  \item $N(I) = I$
  \item $N(A \oplus B) = N(A) \oplus N(B)$
  \item $N(A \cdot B) = \bigoplus_i \bigoplus_j A_i B_j$, where $N(A) = \bigoplus_i A_i$ and $N(B) = \bigoplus_j B_j$ with $A_i$, $B_j$ products of generators.
  \end{itemize}
\end{definition}

\noindent For example, $N ((A \oplus B) (C \oplus D)) = AC \oplus AD \oplus BC \oplus BD$.

\begin{definition}
  Let $A_1, \dots, A_p$ and $B_1, \dots, B_q$ be object expressions. We define an isomorphism $\Delta_{p,q} : (\bigoplus_i A_i) (\bigoplus_j B_j) \rightarrow \bigoplus_i \bigoplus_j A_i B_j$ by repeated application of the distributor $\delta$. Formally, the definition is by induction on $p$ and $q$:
  \begin{itemize}
  \item For $p = 0$, $\Delta_{0,q} = \lambda^*_{\oplus_j B_j} : O (\bigoplus_j B_j) \to O$
  \item For $p = 1$, $\Delta_{p,q}$ is obtained by repeated applications of the distributor $\delta$. More precisely:
  \item For $p = 1$ and $q = 0$, $\Delta_{1,0} = \rho^*_{A_1} : A_1 O \to O$
  \item For $p = 1$ and $q > 0$, \begin{align*}
  	\Delta_{1, q} : A_1 (\textstyle \bigoplus_{j = 1}^q B_j) =\ &A_1 (\textstyle \bigoplus_{j = 1}^{q - 1} B_j \oplus B_q) \\
	\xrightarrow{\delta_{A_1, \bigoplus_{j = 1}^{q - 1} B_j, B_q}}\ &\textstyle A_1 \bigoplus_{j = 1}^{q - 1} B_j \oplus A_1 B_q \\
	\xrightarrow{\Delta_{1, q - 1} \oplus 1_{A_1 B_q}}\ &\textstyle \bigoplus_{j = 1}^{q - 1} A_1 B_j \oplus A_1 B_q \\
	=\ &\textstyle \bigoplus_{j = 1}^q A_1 B_j
  \end{align*}

  \item For $p > 1$, $(\bigoplus_i A_i)(\bigoplus_j B_j) \rightarrow_{\delta} (\bigoplus_{i=1}^{p-1} A  _i)(\bigoplus_j B_j) \oplus A_p (\bigoplus_j B_j)$. We can apply $\Delta_{p-1,q}$ on the left-hand side and an iteration of $\delta$ on the right-hand side, by a similar induction on $q$. 
  \end{itemize}
\end{definition}

\begin{definition}
  For any object expression $A \in \mathrm{R}(X)$ we define its normalization morphism $n_A : A \rightarrow N(A)$ by induction:
  \begin{itemize}
  \item $n_O = 1_O$
  \item $n_I = 1_I$
  \item $n_{A \oplus B} = n_A \oplus n_B$
  \item $n_{A \cdot B} = \Delta_{p,q} \circ (n_A \cdot n_B)$ where $p$ and $q$ are the number of summands in $N(A)$ and $N(B)$ respectively.
  \end{itemize}
\end{definition}

\noindent Note that every normalization morphism is an isomorphism.

\begin{definition}
  Given any bimonoidal signature $\Sigma$, we can build a normalized signature $N(\Sigma)$
  where all domains and codomains are normalized by $N$:
  \begin{align*}
    \Ob N(\Sigma) &= \Ob \Sigma \\
    \Mor N(\Sigma) &= \{ f' : N(\dom f) \rightarrow N(\cod f) \mid f \in \Mor \Sigma \}
  \end{align*}
\end{definition}

\begin{theorem}
  For any bimonoidal signature $\Sigma$, the categories $\overline{\Sigma}$ and $\overline{N(\Sigma)}$
  are bimonoidally isomorphic.
\end{theorem}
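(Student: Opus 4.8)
The plan is to exhibit a pair of mutually inverse bimonoidal functors between $\overline{\Sigma}$ and $\overline{N(\Sigma)}$ that are the identity on objects, using the normalization isomorphisms $n_A : A \to N(A)$ to translate morphism generators. The starting observation is that since $\Ob N(\Sigma) = \Ob \Sigma$, the two categories share the same object set $R(\Ob \Sigma)$, so ``bimonoidally isomorphic'' can be witnessed by a functor that is literally the identity on objects and a bijection on hom-sets, strictly preserving both monoidal structures and the distributors and annihilators.

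First I would invoke the universal property of the free bimonoidal category: a bimonoidal functor out of $\overline{\Sigma}$ into any bimonoidal category $\D$ is determined by, and may be freely specified by, an interpretation of the signature, namely an assignment of an object of $\D$ to each object symbol together with a suitably-typed morphism of $\D$ to each morphism symbol. This is immediate from the tautological definition of $\overline{\Sigma}$ as morphism expressions modulo the bimonoidal axioms, and it also supplies well-definedness on equivalence classes since the target already satisfies those axioms. Using it, I define $F : \overline{\Sigma} \to \overline{N(\Sigma)}$ by sending each object symbol to itself and each morphism symbol $f : \dom f \to \cod f$ to the composite
$$F(f) = n_{\cod f}^{-1} \circ f' \circ n_{\dom f} : \dom f \to \cod f,$$
which is well-typed because $f' : N(\dom f) \to N(\cod f)$ lives in $\overline{N(\Sigma)}$ and the normalization morphisms, being built from distributors and annihilators, exist there as well. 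Symmetrically I define $G : \overline{N(\Sigma)} \to \overline{\Sigma}$ by sending $f'$ to $n_{\cod f} \circ f \circ n_{\dom f}^{-1}$.

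The crux of the argument is the observation that any bimonoidal functor which is the identity on object generators is the identity on all objects and fixes every structural isomorphism; in particular $F(n_A) = n_A$ and $G(n_A) = n_A$ for all $A$. This holds because $n_A$ is assembled solely from the distributors $\delta$, $\delta^\#$ and the annihilators $\lambda^*$, $\rho^*$ (through the derived maps $\Delta_{p,q}$), and a bimonoidal functor sends $\delta_{A,B,C}$ to $\delta_{FA,FB,FC} = \delta_{A,B,C}$, and likewise for the other structural pieces. Granting this, the mutual-inverse check is a one-line cancellation on generators: $G(F(f)) = n_{\cod f}^{-1} \circ (n_{\cod f} \circ f \circ n_{\dom f}^{-1}) \circ n_{\dom f} = f$, and dually $F(G(f')) = f'$.

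Finally I would conclude using the uniqueness half of the universal property: $G \circ F$ and $\mathrm{id}_{\overline{\Sigma}}$ are both bimonoidal endofunctors agreeing on all generators, hence equal, and likewise $F \circ G = \mathrm{id}_{\overline{N(\Sigma)}}$, so $F$ is the desired bimonoidal isomorphism. The main obstacle I anticipate is not the cancellation itself but pinning down the universal property precisely, namely confirming that $\overline{\Sigma}$ genuinely classifies signature interpretations by bimonoidal functors, and verifying carefully that such functors preserve the derived isomorphisms $\Delta_{p,q}$ and hence the normalization maps $n_A$, since the $\Delta_{p,q}$ are themselves defined by a somewhat delicate double induction on the number of summands, with the degenerate $p=0$ and $q=0$ cases handled by the annihilators.
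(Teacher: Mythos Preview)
Your proposal is correct and follows essentially the same approach as the paper: both construct identity-on-objects functors sending a generator $f$ to $n_{\cod f}^{-1}\circ f'\circ n_{\dom f}$ (and symmetrically in the other direction) and check they are mutually inverse. The only cosmetic difference is that the paper spells out the structural induction on morphism expressions and then argues well-definedness directly, whereas you package the same content as the universal property of the free bimonoidal category together with the observation that a bimonoidal functor fixing the object generators fixes all structural isomorphisms, including $n_A$.
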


\begin{proof}
  We define an identity on objects functor $U:\overline \Sigma\to \overline{N(\Sigma)}$, which translates
  a morphism in $\overline{\Sigma}$ to $\overline{N(\Sigma)}$, by structural
  induction on the morphism expression.
  For the base case, for any $\phi$ in $\Mor \Sigma$, define $U(\phi):=n^{-1}_{\cod \phi} \circ \phi' \circ n_{\dom \phi}$.
  Moreover, for any object $A$, define $U(1_A):=1_A$.  And similarly, for all of the structural natural isomorphisms of a
  bimonoidal category, define $U(\delta_{A,B,C}) := \delta_{A,B,C}$, $U(\delta^\#_{A,B,C}) := \delta^\#_{A,B,C}$ and so on.
  For the inductive case, consider some morphisms
  $\phi_1:A\to B$, $\phi_2:B\to C$ in $\overline \Sigma$ for which $U(\phi_1)$ and $U(\phi_2)$ are already defined.  Then define
  $U(\phi_2 \circ \phi_1) := U(\phi_2) \circ U(\phi_1)$.  Similarly, consider  some morphisms
  $\phi_1:A\to B$, $\phi_2:C\to D$ in $\overline \Sigma$ for which $U(\phi_1)$ and $U(\phi_2)$ are already defined. 
  Then define $U(\phi_1 \oplus \phi_2):=U(\phi_1)\oplus U(\phi_2)$ and  $U(\phi_1 \cdot \phi_2):=U(\phi_1)\cdot U(\phi_2)$.

  Similarly, define an identity on objects functor $V: \overline{N(\Sigma)}\to \overline \Sigma$ in a completely analagous way, by structural induction on the signature of $\Mor N(\Sigma)$ and the axioms of a bimonoidal category.  For the base case, this functor takes generators $V(\phi'):=n_{\cod \phi} \circ \phi \circ n_{\dom \phi}^{-1}$.  The rest of the induction is defined as before.

  The only thing left to prove is that the two assignments between $\Sigma$ and  $\overline{N(\Sigma)}$ are well defined; as they preserve the bimonoidal structure and are inverse to each other by construction.  That is to say, we have to show that they preserve the equivalence relations on morphisms induced by the axioms of a bimonoidal category.
Because both assignments only replace generators by expressions, the equivalence relation induced by the bimonoidal structure is automatically preserved. The two functors are bimonoidal (in the sense of Appendix~\ref{app:bimonoidal-functor}) and mutually inverse.
\end{proof}

As a consequence, we will only consider normalized signatures in what
follows, as this will not restrict the generality of our results.

Let us turn to coherence. As mentioned earlier, coherence does not
hold for bimonoidal categories in general, but only when some conditions
on their domain (or equivalently codomain) are met.

\begin{definition}[{\citealp{laplaza1972coherence}}] \label{defi:regular}
  An object $A \in \Ob \Sigma$ is regular when all the summands in $N(A)$
  are distinct (they are all different lists of generators) and for each summand of $N(A)$, its factors are all different (it is a product of distinct generators).
\end{definition}

For instance, $A B \oplus B A$ is regular, but $A B \oplus A B$ and $A
A \oplus A B$ are not.  Note that the second condition (each summand
being a product of distinct generators) was required by Laplaza
because they assumed the multiplicative monoidal structure to be
symmetric. We only assume the additive structure to be symmetric, so
this condition should be superfluous in our case.  We keep it for the
sake of accurate citation as we will be able to accommodate this
artificial restriction later on, when using the following theorem:

\begin{theorem}[{\citealp{laplaza1972coherence}}] \label{thm:regular-coherence}
  Let $A$, $B$ be regular objects of $\overline{\Sigma}$. For all morphisms
  $f, g : A \rightarrow B$ generated by structural isomorphisms of $\overline{\Sigma}$,
  $f = g$.
\end{theorem}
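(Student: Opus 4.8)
The plan is to extract from each structural isomorphism a combinatorial invariant --- its action on occurrences of generators --- and to show that, on regular objects, this invariant both determines the morphism and is itself forced by the source and target. Since we have assumed both monoidal structures strict (Theorem~\ref{thm:semi-strictification}), the structural isomorphisms of $\overline\Sigma$ are generated by the additive symmetry $\gamma$, the distributors $\delta, \delta^\#$, the annihilators $\lambda^*, \rho^*$ and their inverses, modulo the axioms of Appendix~\ref{app:coherence-axioms}. One must be careful, however: a distributor such as $\delta_{A,B,C} : A(B \oplus C) \to AB \oplus AC$ duplicates the occurrence of $A$, so the number of occurrences is not preserved at the level of raw expressions. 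It is preserved once we pass to normal forms. I would therefore first conjugate by the normalization isomorphisms $n_A$ and $n_B$: given $f : A \to B$, set $\tilde f := n_B \circ f \circ n_A^{-1} : N(A) \to N(B)$, which is again a structural isomorphism but now between normalized objects. Since $n_A, n_B$ are common to any $f, g : A \to B$, it suffices to prove $\tilde f = \tilde g$.

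Writing $N(A) = \bigoplus_i A_i$ and $N(B) = \bigoplus_j B_j$ as sums of words over $\Ob\Sigma$, let $\mathrm{occ}(N(A))$ be the set of letter-occurrences, each tagged by its generator label, its summand, and its position within that summand. The first step is to define a label-preserving bijection $\sigma_{\tilde f} : \mathrm{occ}(N(A)) \to \mathrm{occ}(N(B))$ and to check that $\tilde f \mapsto \sigma_{\tilde f}$ is functorial and monoidal for $\oplus$. One specifies $\sigma$ on the generators --- the additive symmetry permutes the summand tags, while $\delta, \delta^\#, \lambda^*, \rho^*$ reindex occurrences exactly as the distributive and annihilation laws dictate --- and then verifies that each Laplaza coherence axiom becomes a trivially valid equality of bijections. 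This verification is long but routine, since both sides of every axiom manifestly rearrange the same tagged occurrences in the same way.

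The second step uses regularity to pin down $\sigma$. A structural isomorphism preserves the underlying word of each occurrence, so it must send a summand with word $w$ to a summand with word $w$; because $A$ and $B$ are regular, the words $A_i$ are pairwise distinct, so there is a unique such target summand for each source summand, and the permutation of summands is forced. Within a matched pair of equal words, the absence of a multiplicative symmetry means the only available structural isomorphism is the identity, so positions are preserved. Hence $\sigma_{\tilde f} = \sigma_{\tilde g}$ for any two structural isomorphisms $\tilde f, \tilde g$. This is precisely the point that fails for a non-regular object such as $A \oplus A$, whose two identical summands may be freely transposed.

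The final and hardest step is faithfulness: that $\sigma_{\tilde f} = \sigma_{\tilde g}$ forces $\tilde f = \tilde g$. After normalization the domain and codomain are sums of words with no remaining multiplicative reassociation to perform, so every structural isomorphism $N(A) \to N(B)$ is, up to the bimonoidal axioms, a permutation of summands --- that is, a morphism of the free symmetric monoidal category on the set of words. Its underlying summand permutation is read off from $\sigma$, so $\sigma_{\tilde f} = \sigma_{\tilde g}$ yields the same permutation, and the symmetric monoidal coherence theorem (Theorem~\ref{thm:joyal-street}), applied to the additive structure with the distinct words $A_i$ as atomic objects, gives $\tilde f = \tilde g$, hence $f = g$. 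The main obstacle is establishing that, after conjugating by $n_A$ and $n_B$, no essential distributor or annihilator content survives --- i.e.\ that every composite of structural isomorphisms between two normalized objects reduces modulo the axioms to a pure summand permutation. Making this reduction precise, and thereby isolating the symmetric-monoidal core to which Theorem~\ref{thm:joyal-street} applies, is the delicate bookkeeping that constitutes the substance of Laplaza's original argument.
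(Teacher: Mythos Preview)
The paper does not give its own proof of this statement: Theorem~\ref{thm:regular-coherence} is stated with attribution to \citet{laplaza1972coherence} and used as a black box throughout. There is therefore nothing in the paper to compare your argument against.

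As to the proposal itself: the overall architecture you describe --- extracting a combinatorial invariant (the induced bijection on occurrences), showing regularity forces that invariant, and then arguing faithfulness --- is indeed the shape of Laplaza's proof. But your write-up is a sketch rather than a proof, and you say as much: the final step, that every structural isomorphism between normalized objects reduces modulo the axioms to a pure permutation of summands, is exactly the substantive content of Laplaza's paper, and you explicitly defer it. Without that reduction established, the appeal to symmetric monoidal coherence (Theorem~\ref{thm:joyal-street}) in the last paragraph is not yet justified. There is also a mild circularity risk in the first step: conjugating by $n_A$ and $n_B$ and then asserting that ``no essential distributor or annihilator content survives'' presupposes that the distributor chains inside $n_A^{-1} \circ f^{-1} \circ n_B$ cancel against those in $f$, which is again coherence in disguise. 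Laplaza avoids this by working directly with a rewriting system on the raw expressions rather than first passing to normal forms. So your outline is sound as a roadmap, but the hard part remains to be done, and the paper simply imports the finished result rather than reproving it.
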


We finish this section with a lemma on the normalization function $N$.

\begin{lemma} \label{lemma:associativity-n}
  For all objects $A, B, C \in \Ob \overline{\Sigma}$,
  $$N(A \cdot N(B \cdot C)) = N(N(A \cdot B) \cdot C)$$  
\end{lemma}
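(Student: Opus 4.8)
The plan is to expand both sides using the definition of $N$ and to check that they produce literally the same sum of products, summand by summand and in the same order. Write $N(A) = \bigoplus_{i=1}^p A_i$, $N(B) = \bigoplus_{j=1}^q B_j$, and $N(C) = \bigoplus_{k=1}^r C_k$, where each $A_i$, $B_j$, $C_k$ is a product of generators (a possibly empty list). Equality in the statement is equality of objects in $\mathrm{R}(\Ob \overline{\Sigma})$, i.e. of normalized expressions, so the goal is to show both sides reduce to the same indexed triple sum.

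First I would record the auxiliary fact that $N$ acts as the identity on normalized expressions, that is, $N(N(X)) = N(X)$ for every $X$. A normalized expression is a sum of products of generators; a short induction on the structure of a single product, using $N(x) = x$ for a generator $x$ and the product clause of $N$, shows that $N$ leaves each product summand unchanged, after which the $N(A \oplus B) = N(A) \oplus N(B)$ clause reassembles the whole sum verbatim. This idempotence is exactly what lets the outermost $N$ on each side treat its already-normalized inner factor correctly.

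Next I would compute the left-hand side. By definition $N(B \cdot C) = \bigoplus_j \bigoplus_k B_j C_k$, listed with $j$ as the outer index and $k$ as the inner one, where $B_j C_k$ is the concatenation of the two generator-lists. Feeding this normalized expression into the product clause of $N$ against $N(A)$, and using idempotence so that $N(N(B\cdot C)) = N(B\cdot C)$, gives $N(A \cdot N(B\cdot C)) = \bigoplus_i \bigoplus_j \bigoplus_k A_i (B_j C_k)$, with the triple of indices ordered lexicographically as $(i,j,k)$. The identical computation on the right yields $N(A\cdot B) = \bigoplus_i \bigoplus_j A_i B_j$ with $(i,j)$ lexicographic, and hence $N(N(A\cdot B)\cdot C) = \bigoplus_i \bigoplus_j \bigoplus_k (A_i B_j) C_k$, again ordered lexicographically in $(i,j,k)$. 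Since concatenation of generator-lists is associative, $A_i(B_j C_k) = (A_i B_j) C_k$ as products, so the two triple sums agree summand by summand.

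The only subtlety, and the step deserving the most care, is the ordering of the summands: because $N$ commits to a specific order of its summands (as the paper already illustrates with $(A\oplus B)(C\oplus D)$), the required equality is a literal equality of expressions, not merely an equality of the underlying multiset of products. The key observation making the argument go through is that the binary product clause of $N$ always nests the left factor's index outside the right factor's, so in both expansions the $\oplus$-indices end up nested in exactly the same way, namely $i$ outermost, then $j$, then $k$, regardless of whether we first combine $B$ with $C$ or $A$ with $B$. I therefore expect no genuine difficulty beyond verifying carefully that these two index nestings coincide, which the explicit expansions above make transparent.
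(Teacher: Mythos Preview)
Your proof is correct and follows essentially the same approach as the paper's: write $N(A)=\bigoplus_i A_i$, $N(B)=\bigoplus_j B_j$, $N(C)=\bigoplus_k C_k$, expand both sides to the triple sum $\bigoplus_i\bigoplus_j\bigoplus_k A_iB_jC_k$, and use associativity of concatenation of generator-lists. The paper's proof is terser, writing $N(A\cdot N(B\cdot C))=N(A\cdot(B\cdot C))$ directly (since the product clause of $N$ only depends on $N$ of its factors), whereas you make the underlying idempotence $N(N(X))=N(X)$ and the summand ordering explicit, but the substance is identical.
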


\begin{proof}
  Let $N(A) = \bigoplus_i A_i$, $N(B) = \bigoplus_j B_j$, $N(C) = \bigoplus_k C_k$. Then  
  \begin{align*}
    N(A \cdot N(B \cdot C)) &= N(A \cdot (B \cdot C)) \\
    &= \bigoplus_i \bigoplus_{j,k} A_i (B_j C_k) \\
    &= \bigoplus_{i,j} \bigoplus_k (A_i B_j) C_k \\
    &= N((A \cdot B) \cdot C) \\
    &= N(N(A \cdot B) \cdot C) \qedhere
  \end{align*}
\end{proof}

\section{Sheet diagrams}

In this section, we assume a fixed normalized bimonoidal signature $\Sigma$.
Because bimonoidal categories are symmetric monoidal categories for
their additive structure, we can already use string diagrams for
symmetric monoidal categories to reason about bimonoidal
categories. Such a diagrammatic language treats the multiplicative
structure as opaque, but we will see in this section how to extend the
language to take the second monoidal structure into account as well.

The first step consists in defining a monoidal signature which we will
use for our symmetric monoidal diagrams.
\begin{definition} \label{def:gamma}
  The monoidal signature $\Gamma$ is given by
  \begin{align*}
    \Ob \Gamma &= (\Ob \Sigma)^* \\
    \Mor \Gamma &= (\Mor \Sigma \cup \{ 1_A | A \in \Ob \Sigma \})^* \\
    \dom [f_1,\ldots, f_n] &= N[\dom (f_1) \cdot \ldots \cdot \dom (f_n)] \\
    \cod [f_1,\ldots, f_n]  &= N[\cod (f_1) \cdot \ldots \cdot \cod (f_n)]
  \end{align*}
  where $\dom (1_A) = \cod (1_A) = A$.
\end{definition}
We are slightly abusing notation in the previous definition: The normalized domains and codomains are sums of products, which we interpret as lists of lists, that is, as lists of object symbols of $\Gamma$.

\noindent The signature $\Gamma$ generates a symmetric monoidal
category $\mathcal{C}$ for the additive structure. Object generators are
multiplicative products of bimonoidal generators, therefore objects in
$\mathcal{C}$ are sums of products of bimonoidal generators.

Similarly, morphism generators in $\Gamma$ are multiplicative products
of morphism generators in $\Sigma$, except that we also allow for
identities to be part of the product. The domain of a morphism generator
is obtained by normalizing the product of the domains of its components.

For instance, consider object generators $A, B, C, D$ and bimonoidal
morphism generators $f : A \oplus AB \rightarrow C$ and $g : BD
\rightarrow A \oplus D$.  We can form $f \cdot 1_C \cdot g \in \Mor
\Gamma$. Its domain is $N((A \oplus AB)\cdot C \cdot BD) = ACBD \oplus
ABCBD$ and its codomain is $N(C \cdot C \cdot (A \oplus D)) = CCA
\oplus CCD$. When drawn as a string diagram generator, it looks as follows:

\begin{center}
\begingroup%
  \makeatletter%
  \providecommand\color[2][]{%
    \errmessage{(Inkscape) Color is used for the text in Inkscape, but the package 'color.sty' is not loaded}%
    \renewcommand\color[2][]{}%
  }%
  \providecommand\transparent[1]{%
    \errmessage{(Inkscape) Transparency is used (non-zero) for the text in Inkscape, but the package 'transparent.sty' is not loaded}%
    \renewcommand\transparent[1]{}%
  }%
  \providecommand\rotatebox[2]{#2}%
  \newcommand*\fsize{\dimexpr\f@size pt\relax}%
  \newcommand*\lineheight[1]{\fontsize{\fsize}{#1\fsize}\selectfont}%
  \ifx\svgwidth\undefined%
    \setlength{\unitlength}{142.30258083bp}%
    \ifx\svgscale\undefined%
      \relax%
    \else%
      \setlength{\unitlength}{\unitlength * \real{\svgscale}}%
    \fi%
  \else%
    \setlength{\unitlength}{\svgwidth}%
  \fi%
  \global\let\svgwidth\undefined%
  \global\let\svgscale\undefined%
  \makeatother%
  \begin{picture}(1,0.37692107)%
    \lineheight{1}%
    \setlength\tabcolsep{0pt}%
    \put(0,0){\includegraphics[width=\unitlength,page=1]{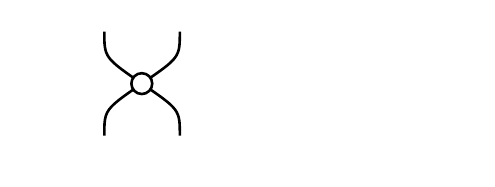}}%
    \put(0.34602038,0.20655148){\makebox(0,0)[lt]{\lineheight{1.25}\smash{\begin{tabular}[t]{l}$f \cdot 1_C \cdot g$\end{tabular}}}}%
    \put(-0.00524987,0.0303772){\makebox(0,0)[lt]{\lineheight{1.25}\smash{\begin{tabular}[t]{l}$ACBD$\end{tabular}}}}%
    \put(0.31099135,0.03125164){\makebox(0,0)[lt]{\lineheight{1.25}\smash{\begin{tabular}[t]{l}$ABCBD$\end{tabular}}}}%
    \put(0.32906348,0.34995114){\makebox(0,0)[lt]{\lineheight{1.25}\smash{\begin{tabular}[t]{l}$CCD$\end{tabular}}}}%
    \put(0.08566938,0.34905335){\makebox(0,0)[lt]{\lineheight{1.25}\smash{\begin{tabular}[t]{l}$CCA$\end{tabular}}}}%
  \end{picture}%
\endgroup%

\end{center}

This is not very informative, since only the additive monoidal
structure is reflected by the diagram. The multiplicative structure is
left unanalyzed because it is internal to the object and morphism
generators.

To fix this issue, we extrude our monoidal diagram into a sheet diagram.
Edges of our monoidal diagram become sheets, and vertices become seams.
On the sheets, we can draw wires whose connectivity reflects which factor
of the morphism generator they are coming from:

\begin{center}
  \scalebox{0.8}{
  \import{figures/}{seam.pdf_tex}}
\end{center}

Informally, each factor of the morphism generator corresponds to a
node on the seam, in the same order. These nodes are represented here
by small black spheres, except for the middle one which corresponds to
an identity, which we leave unmarked. This idea can be
generalized to arbitrary diagrams. This section defines these diagrams
and the associated class of topological transformations.

\begin{definition}
  Let $[f_1,\ldots, f_n] \in \Mor \Gamma$ and let $\dom [f_1,\ldots, f_n]
	= \bigoplus_{j=1}^p \bigotimes_{k=1}^{q_j} A_{jk}$ be its domain. For each $j, k$
  we define the \emph{origin} of $A_{jk}$ in $[f_1,\ldots, f_n]$ as the
  index $1 \leq i \leq n$ such that $A_{jk}$ occurs in $\dom f_i$. If
  $A_{jk}$ occurs in the domain of more than one morphism factor, this
  can be made unambiguous by adding indices to the object symbols
  involved in the domains before normalization.  Similarly, the origin
  of an object symbol in the codomain of a morphism generator is defined.
\end{definition}

\begin{definition}
  Given a symmetric monoidal string diagram $S$ on $\Gamma$, a \emph{sheet diagram}
  $D$ for $S$ is a collection of topological manifolds with boundaries in $\mathbb{R}^3$:
  \begin{itemize}
  \item for each vertex $v \in \mathbb{R}^2$ of $S$, there is a \emph{seam} $v \times [0, 1] \in \mathbb{R}^2$. Let $[f_1,\ldots, f_n]$ be the morphism generators associated with $v$. For each $i$ we pick an $x_{vi} \in (0, 1)$ such that $x_{v1} < \dots < x_{vn}$ and add a \emph{node} $(v, x_{vi})$, which is included in the vertex's seam;
  \item for each edge $e \subset \mathbb{R}^2$ of $S$, there is a \emph{sheet} $e \times [0, 1] \subset \mathbb{R}^3$. Let $p_e : [0, 1] \rightarrow \mathbb{R}^2$ be a parametrization of $e$ from source to target (bottom up).
    Let $[ A_{e1}, \ldots, A_{en}]$ be the type associated to $e$ in $S$. For each $i$ we pick a parametrized segment $\gamma_{ei} : [0,1] \rightarrow [0,1]$ which gives rise to a \emph{wire} $w_{ei} : t \in [0,1] \rightarrow p_e(t) \times \gamma_{ei}(t)$ included in the sheet for edge $e$.
    We require that for all $t \in (0, 1)$ and $i < j$, $\gamma_{ei}(t) < \gamma_{ej}(t)$.
  \end{itemize}
  Furthermore, we require the following conditions:
  \begin{enumerate}
  \item In $S$, if an edge $e$ connects to a vertex $v$ from below (into its domain), then for all wires $w_{ei}$ on the sheet corresponding to $e$ in $D$, $\gamma_{ei}(1) = x_{vj}$ where $j$ is the origin of $A_{ei}$ in the domain of $v$;
  \item In $S$, if an edge $e$ connects to a vertex $v$ from above (out of its codomain), then for all wires $w_{ei}$ on the sheet corresponding to $e$ in $D$, $\gamma_{ei}(0) = x_{vj}$ where $j$ is the origin of $A_{ei}$ in the codomain of $v$.
  \end{enumerate}
  The \emph{skeleton} of $D$ is $S$. The set of sheet diagrams on $\Gamma$ is
  denoted by $D(\Gamma)$.
\end{definition}

\noindent Let us consider an example of a sheet diagram.
\begin{figure}[H]
  \centering
  \begin{subfigure}{0.45\textwidth}
    \centering
\begingroup%
  \makeatletter%
  \providecommand\color[2][]{%
    \errmessage{(Inkscape) Color is used for the text in Inkscape, but the package 'color.sty' is not loaded}%
    \renewcommand\color[2][]{}%
  }%
  \providecommand\transparent[1]{%
    \errmessage{(Inkscape) Transparency is used (non-zero) for the text in Inkscape, but the package 'transparent.sty' is not loaded}%
    \renewcommand\transparent[1]{}%
  }%
  \providecommand\rotatebox[2]{#2}%
  \newcommand*\fsize{\dimexpr\f@size pt\relax}%
  \newcommand*\lineheight[1]{\fontsize{\fsize}{#1\fsize}\selectfont}%
  \ifx\svgwidth\undefined%
    \setlength{\unitlength}{83.55726242bp}%
    \ifx\svgscale\undefined%
      \relax%
    \else%
      \setlength{\unitlength}{\unitlength * \real{\svgscale}}%
    \fi%
  \else%
    \setlength{\unitlength}{\svgwidth}%
  \fi%
  \global\let\svgwidth\undefined%
  \global\let\svgscale\undefined%
  \makeatother%
  \begin{picture}(1,0.9969896)%
    \lineheight{1}%
    \setlength\tabcolsep{0pt}%
    \put(0,0){\includegraphics[width=\unitlength,page=1]{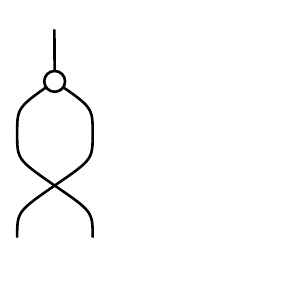}}%
    \put(0.27816039,0.70141465){\makebox(0,0)[lt]{\lineheight{1.25}\smash{\begin{tabular}[t]{l}$g \cdot 1_F$\end{tabular}}}}%
    \put(0,0){\includegraphics[width=\unitlength,page=2]{example-skeleton.pdf}}%
    \put(0.27816039,0.34237945){\makebox(0,0)[lt]{\lineheight{1.25}\smash{\begin{tabular}[t]{l}$f$\end{tabular}}}}%
    \put(0.18831948,0.0517341){\makebox(0,0)[lt]{\lineheight{1.25}\smash{\begin{tabular}[t]{l}$BCD$\end{tabular}}}}%
    \put(-0.00894082,0.05173403){\makebox(0,0)[lt]{\lineheight{1.25}\smash{\begin{tabular}[t]{l}$A$\end{tabular}}}}%
    \put(0.08007511,0.95105834){\makebox(0,0)[lt]{\lineheight{1.25}\smash{\begin{tabular}[t]{l}$HF$\end{tabular}}}}%
  \end{picture}%
\endgroup%

    \vspace{.5cm}  
    \caption{A string diagram $S$ on $\Gamma$}
  \end{subfigure}
  \begin{subfigure}{0.45\textwidth}
    \centering
  \scalebox{0.8}{
    \import{figures/}{example.pdf_tex}}
  \caption{A sheet diagram based on $S$}
  \end{subfigure}
\end{figure}
Obtaining a sheet diagram from a string diagram only requires picking
node positions on each seam, and trajectories of the wires on each
sheet, such that the two boundary conditions are satisfied. The
geometry of the seams and sheets is directly inherited from that of
the string diagram itself. The boundary conditions ensure proper typing
of the diagram. Geometrically, there is a projection from a sheet diagram to its skeleton.

Note that nothing interesting happens on sheets themselves: wires flow
vertically, without being able to cross, from the bottom seam to the
top seam of the sheet (or the diagram boundaries). However, for seams
which have only one input sheet and one output sheet, we will use the
convention of not drawing the seam between the two sheets, informally
treating them as the same sheet. This makes it possible to draw monoidal
string diagrams for the multiplicative product ``on the sheets''.
In the following example we mark the seams with dashed red lines, which will
be omitted in the future:
\begin{figure}[H]
  \centering
  \begin{subfigure}{0.45\textwidth}
    \centering
    \import{figures/}{skeleton-no-seams.pdf_tex}
    \vspace{.3cm}
    \caption{Skeleton}
  \end{subfigure}
  \begin{subfigure}{0.45\textwidth}
    \centering
    \scalebox{0.7}{
      \import{figures/}{no-seams.pdf_tex}
    }
    \caption{Sheet diagram}
  \end{subfigure}
\end{figure}

\begin{definition} \label{def:sheet-diagram-domain}
  Let $D$ be a sheet diagram. Its \emph{domain} $\dom D$ is the domain of its
  skeleton, and similarly for its \emph{codomain} $\cod D$.
\end{definition}

\begin{definition} \label{def:sheet-diagram-composition}
  Let $D_1$ and $D_2$ be sheet diagrams such that $\cod D_1 = \dom D_2$.
  The sheet diagram $D_2 \circ D_1$ is constructed by stacking $D_2$ on top
  of $D_1$, binding sheets and wires at the boundary with linear interpolation.
\end{definition}

\begin{figure}[H]
  \centering
  \begin{tikzpicture}[every node/.style={scale=0.8}]
    \node at (-2,0) {\import{figures/}{composition-1.pdf_tex}};
    \node at (-.25,0) {\Large $\circ$};
    \node at (2,0) {\import{figures/}{composition-2.pdf_tex}};
    \node at (4,0) {\Large $=$};
    \node at (6,0) {\import{figures/}{composition-3.pdf_tex}};
  \end{tikzpicture}

\caption{Example of composing sheet diagrams with $\circ$}
\end{figure}

\begin{definition} \label{def:sheet-diagram-sum}
  Let $D_1$ and $D_2$ be sheet diagrams. The sheet diagram $D_1 \oplus D_2$ is constructed
  by adjoining $D_2$ to the right of $D_1$. 
\end{definition}

Note that we have $S (D_2 \circ D_1) = S (D_2) \circ S (D_1)$ and $S(D_1 \oplus D_2) = S(D_1) \oplus S(D_2)$.

\begin{figure}[H]
  \centering
  \begin{tikzpicture}[every node/.style={scale=0.8}]
    \node at (-2,0) {\import{figures/}{sum-1.pdf_tex}};
    \node at (-.25,0) {\Large $\oplus$};
    \node at (1.5,0) {\import{figures/}{sum-2.pdf_tex}};
    \node at (3.25,0) {\Large $=$};
    \node at (6,0) {\import{figures/}{sum-3.pdf_tex}};
  \end{tikzpicture}

\caption{Example of composing sheet diagrams with $\oplus$}
\end{figure}

To obtain a bimonoidal category of sheet diagrams, the last binary
operation we need to define is the tensor product. This is more
intricate since it is not naturally represented by the structure of
the skeleton. We start by defining the whiskering of a diagram with an
object, in other words a tensor product where one of the factors is an identity.

\begin{definition} \label{def:sheet-diagram-whiskering}
  Let $A \in \Ob \Sigma$ and $D$ be a sheet diagram. The \emph{left whiskering} of $D$ by $A$, denoted by $A \cdot D$, is obtained from $D$ by:
  \begin{itemize}
  \item adding a node $n_v$ on each seam corresponding to vertex $v$ in $S(D)$, before all other nodes on the seam;
  \item adding a wire $w_e$ on each sheet corresponding to edge $e$ in $S(D)$, before all other wires on the seam;
  \item replacing all symmetries $\gamma_{U,V}$ in $S(D)$ by the whiskered symmetry $\gamma_{AU,AV}$;
  \end{itemize}
  Such that if sheet $e$ connects to seam $v$, wire $w_e$ connects to
  node $n_v$.  We have $A \cdot D : N(A \cdot \dom D) \rightarrow N(A
  \cdot \cod D)$.  The \emph{right whiskering} is defined similarly,
  placing the new nodes and wires after the existing ones instead.
\end{definition}

\begin{figure}[H]
  \centering
  \begin{tikzpicture}[every node/.style={scale=0.8}]
    \node at (-1,0) {E};
    \node at (-.25,0) {\Large $\cdot$};
    \node at (1.5,0) {
\begingroup%
  \makeatletter%
  \providecommand\color[2][]{%
    \errmessage{(Inkscape) Color is used for the text in Inkscape, but the package 'color.sty' is not loaded}%
    \renewcommand\color[2][]{}%
  }%
  \providecommand\transparent[1]{%
    \errmessage{(Inkscape) Transparency is used (non-zero) for the text in Inkscape, but the package 'transparent.sty' is not loaded}%
    \renewcommand\transparent[1]{}%
  }%
  \providecommand\rotatebox[2]{#2}%
  \newcommand*\fsize{\dimexpr\f@size pt\relax}%
  \newcommand*\lineheight[1]{\fontsize{\fsize}{#1\fsize}\selectfont}%
  \ifx\svgwidth\undefined%
    \setlength{\unitlength}{87.60428694bp}%
    \ifx\svgscale\undefined%
      \relax%
    \else%
      \setlength{\unitlength}{\unitlength * \real{\svgscale}}%
    \fi%
  \else%
    \setlength{\unitlength}{\svgwidth}%
  \fi%
  \global\let\svgwidth\undefined%
  \global\let\svgscale\undefined%
  \makeatother%
  \begin{picture}(1,0.82162645)%
    \lineheight{1}%
    \setlength\tabcolsep{0pt}%
    \put(0,0){\includegraphics[width=\unitlength,page=1]{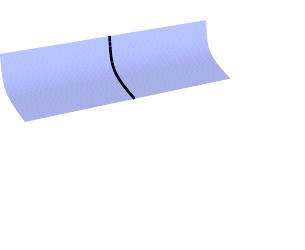}}%
    \put(0.66028543,0.08889778){\color[rgb]{0,0,0}\makebox(0,0)[t]{\lineheight{1.25}\smash{\begin{tabular}[t]{c}$B$\end{tabular}}}}%
    \put(0,0){\includegraphics[width=\unitlength,page=2]{whiskering-1.pdf}}%
    \put(0.35565238,0.75089602){\color[rgb]{0,0,0}\makebox(0,0)[t]{\lineheight{1.25}\smash{\begin{tabular}[t]{c}$C$\end{tabular}}}}%
    \put(0,0){\includegraphics[width=\unitlength,page=3]{whiskering-1.pdf}}%
    \put(0.39712526,0.00869499){\color[rgb]{0,0,0}\makebox(0,0)[t]{\lineheight{1.25}\smash{\begin{tabular}[t]{c}$A$\end{tabular}}}}%
    \put(0,0){\includegraphics[width=\unitlength,page=4]{whiskering-1.pdf}}%
    \put(0.66641465,0.70390129){\color[rgb]{0,0,0}\makebox(0,0)[t]{\lineheight{1.25}\smash{\begin{tabular}[t]{c}$D$\end{tabular}}}}%
  \end{picture}%
\endgroup%
};
    \node at (3.5,0) {\Large $=$};
    \node at (6,0) {\import{figures/}{whiskering-2.pdf_tex}};
  \end{tikzpicture}

\caption{Example of the left whiskering of a sheet diagram}
\end{figure}

We now turn to the general definition of the tensor product. We first
need to define a family of isomorphisms to reorder summands.

\begin{definition} \label{def:reorder}
  Let $p, q \in \mathbb{N}$ and $X_1, \dots, X_p, Y_1, \dots, Y_q \in \Ob \Gamma$.
  We define an isomorphism
  $$E_{XY} : \bigoplus_{i=1}^p \bigoplus_{j=1}^q X_i Y_j \rightarrow \bigoplus_{j=1}^q \bigoplus_{i=1}^p X_i Y_j$$
  which reorders the summands as indicated by the commutation of sums,
  by repeated application of the symmetry for $\oplus$.
\end{definition}

The definition of the tensor product of arbitrary diagrams exploits the exchange law
to express it as a composition of whiskered diagrams.

\begin{definition} \label{def:sheet-diagram-tensor}
  Let $f : \bigoplus_i A_i \rightarrow \bigoplus_j B_j$ and $g : \bigoplus_k C_k \rightarrow \bigoplus_l D_l$ be sheet diagrams, where $A_i, B_j, C_k, D_l \in \Ob \Gamma$.
  We can define the tensor product of $D_1$ and $D_2$ as:

  \begin{align*}
    f \otimes g &\coloneqq E_{BD}^{-1} \circ (\bigoplus_l f D_l) \circ E_{AD} \circ (\bigoplus_i A_i g) \\
    &: \bigoplus_i \bigoplus_k A_i C_k \rightarrow \bigoplus_l \bigoplus_j B_j D_l
  \end{align*}
\end{definition}

\begin{figure}[H]
  \centering
  \begin{tikzpicture}[every node/.style={scale=0.8}]
    \node at (-2,0) {
\begingroup%
  \makeatletter%
  \providecommand\color[2][]{%
    \errmessage{(Inkscape) Color is used for the text in Inkscape, but the package 'color.sty' is not loaded}%
    \renewcommand\color[2][]{}%
  }%
  \providecommand\transparent[1]{%
    \errmessage{(Inkscape) Transparency is used (non-zero) for the text in Inkscape, but the package 'transparent.sty' is not loaded}%
    \renewcommand\transparent[1]{}%
  }%
  \providecommand\rotatebox[2]{#2}%
  \newcommand*\fsize{\dimexpr\f@size pt\relax}%
  \newcommand*\lineheight[1]{\fontsize{\fsize}{#1\fsize}\selectfont}%
  \ifx\svgwidth\undefined%
    \setlength{\unitlength}{61.38458282bp}%
    \ifx\svgscale\undefined%
      \relax%
    \else%
      \setlength{\unitlength}{\unitlength * \real{\svgscale}}%
    \fi%
  \else%
    \setlength{\unitlength}{\svgwidth}%
  \fi%
  \global\let\svgwidth\undefined%
  \global\let\svgscale\undefined%
  \makeatother%
  \begin{picture}(1,1.1554)%
    \lineheight{1}%
    \setlength\tabcolsep{0pt}%
    \put(0,0){\includegraphics[width=\unitlength,page=1]{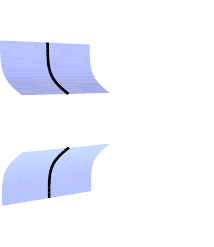}}%
    \put(0.23200689,0.06123963){\color[rgb]{0,0,0}\makebox(0,0)[t]{\lineheight{1.25}\smash{\begin{tabular}[t]{c}$A_1$\end{tabular}}}}%
    \put(0,0){\includegraphics[width=\unitlength,page=2]{tensor-1.pdf}}%
    \put(0.22173968,1.02452954){\color[rgb]{0,0,0}\makebox(0,0)[t]{\lineheight{1.25}\smash{\begin{tabular}[t]{c}$B_1$\end{tabular}}}}%
    \put(0,0){\includegraphics[width=\unitlength,page=3]{tensor-1.pdf}}%
    \put(0.5096063,0.73199216){\color[rgb]{0,0,0}\makebox(0,0)[t]{\lineheight{1.25}\smash{\begin{tabular}[t]{c}$f$\end{tabular}}}}%
    \put(0,0){\includegraphics[width=\unitlength,page=4]{tensor-1.pdf}}%
    \put(0.8262609,0.01240896){\color[rgb]{0,0,0}\makebox(0,0)[t]{\lineheight{1.25}\smash{\begin{tabular}[t]{c}$A_2$\end{tabular}}}}%
    \put(0.82003472,1.05445789){\color[rgb]{0,0,0}\makebox(0,0)[t]{\lineheight{1.25}\smash{\begin{tabular}[t]{c}$B_2$\end{tabular}}}}%
  \end{picture}%
\endgroup%
};
    \node at (-.5,0) {\Large $\otimes$};
    \node at (1,0) {
\begingroup%
  \makeatletter%
  \providecommand\color[2][]{%
    \errmessage{(Inkscape) Color is used for the text in Inkscape, but the package 'color.sty' is not loaded}%
    \renewcommand\color[2][]{}%
  }%
  \providecommand\transparent[1]{%
    \errmessage{(Inkscape) Transparency is used (non-zero) for the text in Inkscape, but the package 'transparent.sty' is not loaded}%
    \renewcommand\transparent[1]{}%
  }%
  \providecommand\rotatebox[2]{#2}%
  \newcommand*\fsize{\dimexpr\f@size pt\relax}%
  \newcommand*\lineheight[1]{\fontsize{\fsize}{#1\fsize}\selectfont}%
  \ifx\svgwidth\undefined%
    \setlength{\unitlength}{61.38458282bp}%
    \ifx\svgscale\undefined%
      \relax%
    \else%
      \setlength{\unitlength}{\unitlength * \real{\svgscale}}%
    \fi%
  \else%
    \setlength{\unitlength}{\svgwidth}%
  \fi%
  \global\let\svgwidth\undefined%
  \global\let\svgscale\undefined%
  \makeatother%
  \begin{picture}(1,1.1554)%
    \lineheight{1}%
    \setlength\tabcolsep{0pt}%
    \put(0,0){\includegraphics[width=\unitlength,page=1]{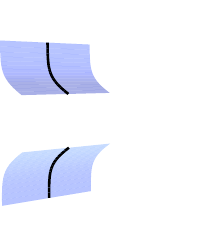}}%
    \put(0.23200689,0.06123963){\color[rgb]{0,0,0}\makebox(0,0)[t]{\lineheight{1.25}\smash{\begin{tabular}[t]{c}$C_1$\end{tabular}}}}%
    \put(0,0){\includegraphics[width=\unitlength,page=2]{tensor-2.pdf}}%
    \put(0.22173968,1.02452954){\color[rgb]{0,0,0}\makebox(0,0)[t]{\lineheight{1.25}\smash{\begin{tabular}[t]{c}$D_1$\end{tabular}}}}%
    \put(0,0){\includegraphics[width=\unitlength,page=3]{tensor-2.pdf}}%
    \put(0.5096063,0.73199216){\color[rgb]{0,0,0}\makebox(0,0)[t]{\lineheight{1.25}\smash{\begin{tabular}[t]{c}$g$\end{tabular}}}}%
    \put(0,0){\includegraphics[width=\unitlength,page=4]{tensor-2.pdf}}%
    \put(0.8262609,0.01240896){\color[rgb]{0,0,0}\makebox(0,0)[t]{\lineheight{1.25}\smash{\begin{tabular}[t]{c}$C_2$\end{tabular}}}}%
    \put(0.82003472,1.05445789){\color[rgb]{0,0,0}\makebox(0,0)[t]{\lineheight{1.25}\smash{\begin{tabular}[t]{c}$D_2$\end{tabular}}}}%
  \end{picture}%
\endgroup%
};
    \node at (2.5,0) {\Large $=$};
    \node at (5.5,0) {\import{figures/}{tensor-3.pdf_tex}};
  \end{tikzpicture}

\caption{Example of the tensoring sheet diagrams}
\end{figure}

Note that we made a choice here: $f \otimes g$ could have equivalently
been defined as $(\bigoplus_j B_j g) \circ E^{-1}_{BC} \circ
(\bigoplus_k f C_k) \circ E^{-1}_{AC}$.  In the next section, we will
define a class of isotopies which will let us relate the two
expressions (Lemma~\ref{lemma:tensor-recursion}).  Before that, we
introduce one last product, that of morphism generators, for which we
do not need to resort to whiskering.

\begin{definition}
	Let $f : \bigoplus_k A_k \rightarrow \bigoplus_l B_l$ and $g : \bigoplus_p C_p \rightarrow \bigoplus_q D_q \in \Mor \Gamma$. They are both lists of morphism generators in $\Sigma$ or identities: $[ f_1, \ldots,  f_n ]$ and $[g_1, \ldots, g_m]$. We define their tensor as
	$$[f_1, \ldots, f_n, g_1, \ldots, g_m]$$
  In other words we concatenate the lists of generators and identities that compose them.
\end{definition}

\begin{lemma}
  For all $f, g \in \Mor \Gamma$, $\dom fg = N((\dom f)(\dom g))$ and $\cod fg = N((\cod f)(\cod g))$.
\end{lemma}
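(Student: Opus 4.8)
The plan is to isolate a single congruence property of the normalization function $N$ and then deduce the lemma from it in essentially one line. The property is
\[
  N(X \cdot Y) = N(N(X) \cdot N(Y)) \qquad \text{for all } X, Y \in R(\Ob \Sigma).
\]

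First I would unfold the definitions. Since $fg$ is the concatenation $[f_1, \dots, f_n, g_1, \dots, g_m]$, Definition~\ref{def:gamma} gives
\[
  \dom(fg) = N(\dom(f_1) \cdots \dom(f_n) \cdot \dom(g_1) \cdots \dom(g_m)) = N(P \cdot Q),
\]
where I write $P = \dom(f_1) \cdots \dom(f_n)$ and $Q = \dom(g_1) \cdots \dom(g_m)$. As $\dom f = N(P)$ and $\dom g = N(Q)$, the right-hand side of the claim is $N((\dom f)(\dom g)) = N(N(P) \cdot N(Q))$, so the two sides coincide by the congruence property applied to $X = P$, $Y = Q$. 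The codomain case is verbatim the same with $\cod$ for $\dom$. Because the multiplicative structure is strict I treat the $n$-fold products $P$ and $Q$ as single objects; should the internal bracketing of the concatenated product need justifying, it is supplied by Lemma~\ref{lemma:associativity-n}.

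Next I would prove the congruence. The product clause $N(A \cdot B) = \bigoplus_i \bigoplus_j A_i B_j$, where $N(A) = \bigoplus_i A_i$ and $N(B) = \bigoplus_j B_j$, manifestly depends on $A$ and $B$ only through $N(A)$ and $N(B)$. Hence $N(N(X) \cdot N(Y))$ and $N(X \cdot Y)$ agree as soon as $N(N(X)) = N(X)$ and $N(N(Y)) = N(Y)$, so the congruence reduces precisely to \emph{idempotency} of $N$. I would establish idempotency by structural induction through two sub-claims: (i) $N(X)$ is always normalized in the sense of Definition~\ref{def:normalized}, an easy induction over the four clauses defining $N$; and (ii) $N$ fixes every normalized expression, proved by peeling off summands with $N(A \oplus B) = N(A) \oplus N(B)$ and showing each product of generators is fixed using the product clause together with the base cases $N(I) = I$ and $N(a) = a$ for a generator $a$. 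Chaining (i) and (ii) yields $N(N(X)) = N(X)$.

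I expect the one genuinely delicate point, and the main obstacle, to be the base case $N(a) = a$ for a single generator $a \in \Ob \Sigma$, which the paper's definition of $N$ leaves implicit: it lists $N(O)$, $N(I)$, and the two binary clauses, but generators are atomic nullary symbols of $R(\Ob \Sigma)$. I would fix the convention that $N$ acts as the identity on generators, as is plainly intended, so that any product of generators normalizes to itself as a single summand. With that settled, the two inductions of the previous paragraph are routine and the lemma follows from the short computation above.
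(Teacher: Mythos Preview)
Your proof is correct and aligns with the paper's: the paper's one-line proof simply cites Lemma~\ref{lemma:associativity-n} and Definition~\ref{def:gamma}, and the congruence $N(X\cdot Y)=N(N(X)\cdot N(Y))$ you isolate is exactly what the proof of Lemma~\ref{lemma:associativity-n} establishes along the way (its first and last equalities are the one-sided versions $N(A\cdot N(BC))=N(A\cdot BC)$ and $N(N(AB)\cdot C)=N(AB\cdot C)$). Your explicit reduction to idempotency of $N$ just unpacks that same observation---that the product clause of $N$ depends only on $N(A)$ and $N(B)$---in slightly more detail.
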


\begin{proof}
  This is a direct consequence of the associativity of N (Lemma~\ref{lemma:associativity-n}) and the definition of domains and codomaing in $\Gamma$ (Definition~\ref{def:gamma}).
\end{proof}

\section{Isomorphisms of sheet diagrams}

We first begin with a lemma on isomorphisms of monoidal string diagrams.
\begin{lemma} \label{lemma:open-graph-iso}
  Given an anchored isomorphism of progressive plane diagrams between
  diagrams $S_1$ and $S_2$, there is an open graph isomorphism $\phi$
  between the open graphs defined by $S_1$ and $S_2$.  In other words
  there are bijections between their sets of nodes, their sets of
  edges, and those respect the adjacency relations.
\end{lemma}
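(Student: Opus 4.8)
The plan is to unpack the definitions of ``anchored isomorphism of progressive plane diagrams'' and ``open graph'' from Joyal–Street, and then observe that the required open graph isomorphism is essentially already contained in the data of the anchored isomorphism. Recall that a progressive plane diagram comes with a finite set of inner nodes (the vertices), together with an embedding into a rectangle $\mathbb{R}\times[0,1]$, and the edges are the connected components of the complement of the nodes inside the graph. An anchored isomorphism $\phi : S_1 \to S_2$ is, by definition, a homeomorphism of the underlying topological spaces (compatible with the anchoring on the boundary) that restricts to a homeomorphism of the embedded graphs and preserves the progressive (height) structure. First I would extract from $\phi$ its restriction to the set of nodes: since $\phi$ is a homeomorphism carrying the graph of $S_1$ onto the graph of $S_2$, and nodes are exactly the non-manifold points (points of valence $\neq 2$) together with the distinguished inner vertices, $\phi$ must send nodes to nodes bijectively. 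This gives the bijection on node sets.

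Next I would build the bijection on edges. Because $\phi$ is a homeomorphism that fixes the node sets setwise, it restricts to a homeomorphism of the complement of the nodes; the connected components of this complement in $S_1$ are exactly the edges of $S_1$, and likewise for $S_2$. A homeomorphism induces a bijection on connected components, so $\phi$ determines a bijection between the edge set of $S_1$ and that of $S_2$. The remaining point is that these two bijections are compatible with incidence: if an edge $e$ of $S_1$ has an endpoint at a node $v$ (including the cases where the endpoint is a boundary point in the domain or codomain), then $\phi(e)$ has the corresponding endpoint at $\phi(v)$. This is immediate from the fact that $\phi$ is a continuous map respecting the topological closure relation between edges and their endpoints, and that the anchoring guarantees the boundary incidences (the domain and codomain wires) are matched up as well, preserving their left-to-right order.

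The main obstacle, such as it is, lies not in producing the bijections but in reconciling the topological language of Joyal–Street with the purely combinatorial language of open graphs: one must check that ``open graph isomorphism'' is precisely captured by ``bijection on nodes and edges respecting adjacency'', and that the progressive/polarised structure of the plane diagram does indeed encode orientations and a well-defined source/target partition of the edge endpoints. I would therefore devote the bulk of the proof to carefully matching the two frameworks, citing the relevant definitions from \citet{joyal1991geometry-1}, and verifying that the incidence data (which node is the source and which is the target of each edge) is transported by $\phi$. Once the frameworks are aligned, the existence of $\phi$ as an open graph isomorphism follows formally from the fact that a homeomorphism preserves node sets, connected components of the complement of the nodes, and closure relations between them.
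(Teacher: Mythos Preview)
The paper states this lemma without proof, treating it as an immediate consequence of the definitions in \citet{joyal1991geometry-1}. Your proposal is correct and supplies exactly the unpacking the paper omits: extracting the combinatorial open-graph data from the topological homeomorphism underlying an anchored isomorphism.

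One small refinement worth making: your characterisation of nodes as ``the non-manifold points (points of valence $\neq 2$) together with the distinguished inner vertices'' is more work than necessary. In Joyal--Street a progressive graph carries its set of inner vertices $G_0$ as part of the \emph{data} (with $G \setminus G_0$ required to be a $1$-manifold), and an isomorphism of such graphs is by definition required to carry $G_0$ onto $G_0'$. So there is no need to recover the node set topologically; the bijection on nodes is handed to you. This in fact streamlines your argument: once the node bijection is given, the edge bijection and incidence preservation follow precisely as you describe, from the fact that a homeomorphism induces a bijection on connected components of the complement and respects closures. The anchoring then takes care of the boundary (outer) vertices and their ordering, as you note.
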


\begin{definition}
	Given sheet diagrams $D_1, D_2$ with $\dom D_1 = \dom D_2$ and $\cod D_1 = \cod D_2$, a \emph{regular isomorphism of sheet diagrams} from $D_1$ to $D_2$ is given by an anchored isomorphism of progressive plane diagrams $\alpha : S(D_1) \rightarrow S(D_2)$, which gives an open graph isomorphism $\phi$ by Lemma~\ref{lemma:open-graph-iso}, as well as:
  \begin{itemize}
	  \item for each node $n_{vi}$ on a seam $v$ in $D_1$, a continuous map $x_{vi}^* : [0,1] \rightarrow [0,1]$ such that $x_{vi}^*(0) = x_{vi}$ and $x_{vi}^*(1) = x_{\phi(v)i}$, where $n_{\phi(v)i} \in D_2$;
  \item for each wire $w_{ei}$ on a sheet $e$ in $D_1$, a continuous isotopy $\gamma_{ei}^* : [0,1]\times [0,1] \rightarrow [0,1]$ such that $\gamma_{ei}^*(0,t) = \gamma_{ei}(t)$ and $\gamma_{ei}^*(1,t) = \gamma_{\phi(e)i}(t)$ for all $t \in [0,1]$, where $w_{\phi(e)i} \in D_2$
  \end{itemize}
  Finally, at each time $t \in [0, 1]$, the sheet diagram made of the skeleton $\alpha(t)$, the node positions $x_{vi}^*(t)$ and the wire paths $\gamma_{ei}^*(t)$ is required to be a valid sheet diagram.
\end{definition}

\begin{lemma} \label{lemma:regular-isomorphism-preserves-interpretation}
  Regular isomorphisms of sheet diagrams preserve their interpretations as bimonoidal morphisms.
\end{lemma}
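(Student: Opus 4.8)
The plan is to reduce the statement to the two coherence results already available: Joyal--Street's theorem for the additive (symmetric monoidal) skeleton (Theorem~\ref{thm:joyal-street}) and Laplaza's regular coherence for the structural isomorphisms of $\overline{\Sigma}$ (Theorem~\ref{thm:regular-coherence}). A regular isomorphism packages two layers of data: an anchored isomorphism $\alpha$ of progressive plane diagrams at the level of skeletons, and a continuous deformation of the node positions $x_{vi}^*$ and wire paths $\gamma_{ei}^*$ that is required to pass through valid sheet diagrams at every time. I would treat these two layers separately and then show that they are compatible.

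First I would handle the skeleton. By Theorem~\ref{thm:joyal-street}, the anchored isomorphism $\alpha : S(D_1) \to S(D_2)$ witnesses that $S(D_1)$ and $S(D_2)$ denote the same morphism of the free symmetric monoidal category $\mathbb{F}_s(\Gamma)$. Since the interpretation of a sheet diagram is built from its skeleton by sending each morphism generator of $\Gamma$ to the corresponding multiplicative product in $\overline{\Sigma}$ (pre- and post-composed with the normalization isomorphisms $n$) and each additive symmetry of the skeleton to the additive symmetry of $\overline{\Sigma}$, this assignment is functorial and symmetric-monoidal for $\oplus$. Consequently every equation between skeletons that is provable in symmetric monoidal categories, which is exactly what $\alpha$ encodes, maps to an equation of bimonoidal morphisms. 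This accounts for all the moves that $\alpha$ performs on the underlying graph: interchange of vertices at distinct horizontal positions (bifunctoriality of $\oplus$), and the naturality, symmetry and hexagon relations of the additive symmetry $\gamma$.

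Second I would transport the sheet data along the open graph isomorphism $\phi$ produced by Lemma~\ref{lemma:open-graph-iso}. The node maps and wire isotopies identify each node $n_{vi}$ of $D_1$ with $n_{\phi(v)i}$ of $D_2$ and each wire $w_{ei}$ with $w_{\phi(e)i}$, so corresponding nodes carry the same $\Sigma$-generator and corresponding wires have the same origin. The crucial point is that validity at every time $t$ forces the wire-order condition ($\gamma_{ei}(t) < \gamma_{ej}(t)$ for $i < j$) to persist throughout the deformation: wires may slide on a sheet but never cross, so the isotopy never introduces a multiplicative symmetry and never alters which factor a wire represents. Hence $D_1$ and $D_2$ are assembled from the very same generators, combined by the same pattern of $\circ$, $\oplus$ and $\cdot$, and their interpretations can differ only by the structural isomorphisms ($\Delta_{p,q}$, the reorderings $E_{XY}$, and the normalizations $n$) used to match up the domains and codomains of successive slices.

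Finally, these residual structural isomorphisms are pinned down by regularity: since all the objects occurring as intermediate types are regular, Theorem~\ref{thm:regular-coherence} guarantees that any two parallel structural morphisms between them coincide, so the bookkeeping isomorphisms on the two sides are forced to be equal and the two interpretations are literally the same morphism of $\overline{\Sigma}$. To make this rigorous I expect to argue by continuity: the interpretation takes values in the discrete set of morphisms of $\overline{\Sigma}$, and compactness of $[0,1]$ lets me cut the deformation into finitely many intervals of constant combinatorial type, on each of which the interpretation is literally constant, joined at finitely many critical times across which the change is one of the elementary moves handled above. I expect the main obstacle to be precisely the interface between the two layers: verifying that sliding an additive symmetry of the skeleton past the multiplicative wire data remains coherent, and in particular that the validity condition keeps every intermediate type regular, so that Theorem~\ref{thm:regular-coherence} can be applied uniformly along the whole deformation.
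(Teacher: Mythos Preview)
Your first paragraph already contains the complete argument, and it is exactly the paper's proof: the interpretation of a sheet diagram is, by construction, the interpretation of its skeleton $S(D)$ under the symmetric monoidal functor $\mathbb{F}_s(\Gamma)\to\overline{\Sigma}$ that sends a $\Gamma$-generator $[f_1,\ldots,f_n]$ to the appropriate product in $\overline{\Sigma}$. Since a regular isomorphism includes an anchored isomorphism $\alpha:S(D_1)\to S(D_2)$, Theorem~\ref{thm:joyal-street} finishes the job immediately. The node positions $x_{vi}$ and wire paths $\gamma_{ei}$ are purely decorative and do not enter the interpretation at all, so there is nothing further to check.

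Your subsequent paragraphs introduce machinery that is not needed and, worse, a condition that need not hold. You propose to invoke Theorem~\ref{thm:regular-coherence} on the intermediate types, but those types are arbitrary objects of $\mathbb{F}_s(\Gamma)$ and there is no reason they should be regular in Laplaza's sense: a sheet labelled $AA$, or two parallel sheets both labelled $AB$, are perfectly legal and destroy regularity. You correctly flag this as ``the main obstacle'', but it is not an obstacle that can be overcome---it is a sign that this route is the wrong one. The point is that no structural bimonoidal isomorphisms beyond the additive symmetric monoidal ones are moved by a regular isomorphism (that is precisely why tensor merges had to be added separately in Definition~\ref{def:bimonoidal-isotopy}), so Laplaza coherence never enters. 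Drop everything after your first paragraph and you have the paper's one-line proof.
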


\begin{proof}
  As the interpretation of a sheet diagram $D$ is the interpretation of its skeleton $S(D)$,
  this is a simple consequence of Theorem~\ref{thm:joyal-street}.
\end{proof}

However, regular isomorphism of sheet diagrams does not capture the
entire equational theory of bimonoidal categories: the exchange law
for the multiplicative monoidal structure is missing.
For instance, the following diagrams have the same interpretation, but
they are not regularly isomorphic:
\begin{figure}[H]
  \centering
  \begin{subfigure}{0.45\textwidth}
    \centering
    \scalebox{0.8}{
\begingroup%
  \makeatletter%
  \providecommand\color[2][]{%
    \errmessage{(Inkscape) Color is used for the text in Inkscape, but the package 'color.sty' is not loaded}%
    \renewcommand\color[2][]{}%
  }%
  \providecommand\transparent[1]{%
    \errmessage{(Inkscape) Transparency is used (non-zero) for the text in Inkscape, but the package 'transparent.sty' is not loaded}%
    \renewcommand\transparent[1]{}%
  }%
  \providecommand\rotatebox[2]{#2}%
  \newcommand*\fsize{\dimexpr\f@size pt\relax}%
  \newcommand*\lineheight[1]{\fontsize{\fsize}{#1\fsize}\selectfont}%
  \ifx\svgwidth\undefined%
    \setlength{\unitlength}{79.17790003bp}%
    \ifx\svgscale\undefined%
      \relax%
    \else%
      \setlength{\unitlength}{\unitlength * \real{\svgscale}}%
    \fi%
  \else%
    \setlength{\unitlength}{\svgwidth}%
  \fi%
  \global\let\svgwidth\undefined%
  \global\let\svgscale\undefined%
  \makeatother%
  \begin{picture}(1,1.33575491)%
    \lineheight{1}%
    \setlength\tabcolsep{0pt}%
    \put(0,0){\includegraphics[width=\unitlength,page=1]{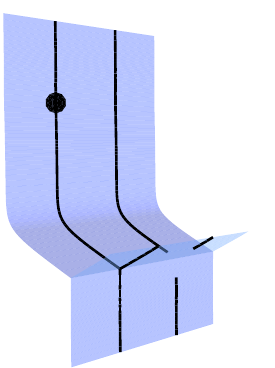}}%
    \put(0.07483519,0.92874355){\color[rgb]{0,0,0}\makebox(0,0)[t]{\lineheight{0}\smash{\begin{tabular}[t]{c}$g$\end{tabular}}}}%
    \put(0,0){\includegraphics[width=\unitlength,page=2]{nonregular-isotopy-1.pdf}}%
    \put(0.54075043,0.948664){\color[rgb]{0,0,0}\makebox(0,0)[t]{\lineheight{0}\smash{\begin{tabular}[t]{c}$g$\end{tabular}}}}%
    \put(0.55986942,0.28508246){\color[rgb]{0,0,0}\makebox(0,0)[t]{\lineheight{0}\smash{\begin{tabular}[t]{c}$f$\end{tabular}}}}%
    \put(0,0){\includegraphics[width=\unitlength,page=3]{nonregular-isotopy-1.pdf}}%
  \end{picture}%
\endgroup%

      }
  \end{subfigure}
  \begin{subfigure}{0.45\textwidth}
    \centering
    \scalebox{0.8}{
\begingroup%
  \makeatletter%
  \providecommand\color[2][]{%
    \errmessage{(Inkscape) Color is used for the text in Inkscape, but the package 'color.sty' is not loaded}%
    \renewcommand\color[2][]{}%
  }%
  \providecommand\transparent[1]{%
    \errmessage{(Inkscape) Transparency is used (non-zero) for the text in Inkscape, but the package 'transparent.sty' is not loaded}%
    \renewcommand\transparent[1]{}%
  }%
  \providecommand\rotatebox[2]{#2}%
  \newcommand*\fsize{\dimexpr\f@size pt\relax}%
  \newcommand*\lineheight[1]{\fontsize{\fsize}{#1\fsize}\selectfont}%
  \ifx\svgwidth\undefined%
    \setlength{\unitlength}{78.07809961bp}%
    \ifx\svgscale\undefined%
      \relax%
    \else%
      \setlength{\unitlength}{\unitlength * \real{\svgscale}}%
    \fi%
  \else%
    \setlength{\unitlength}{\svgwidth}%
  \fi%
  \global\let\svgwidth\undefined%
  \global\let\svgscale\undefined%
  \makeatother%
  \begin{picture}(1,1.35410315)%
    \lineheight{1}%
    \setlength\tabcolsep{0pt}%
    \put(0,0){\includegraphics[width=\unitlength,page=1]{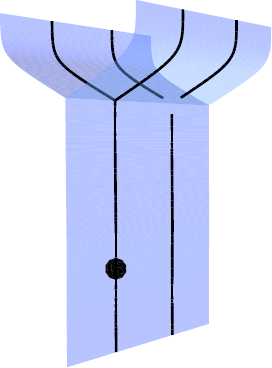}}%
    \put(0.29765171,0.32233696){\color[rgb]{0,0,0}\makebox(0,0)[t]{\lineheight{0}\smash{\begin{tabular}[t]{c}$g$\end{tabular}}}}%
    \put(0.5382199,0.88210005){\color[rgb]{0,0,0}\makebox(0,0)[t]{\lineheight{0}\smash{\begin{tabular}[t]{c}$f$\end{tabular}}}}%
    \put(0,0){\includegraphics[width=\unitlength,page=2]{nonregular-isotopy-2.pdf}}%
  \end{picture}%
\endgroup%

    }
  \end{subfigure}
\end{figure}

\noindent Their interpretations are equal by the exchange rule for the multiplicative
product:
\begin{align*}
  ((g \cdot 1_B) \oplus (g \cdot 1_C)) \circ (1_A \cdot f) &= (g \cdot (1_{B \oplus C})) \circ (1_A \cdot f) \\
  &= (1_C \cdot f) \circ (g \cdot 1_D)
\end{align*}
Therefore we need to broaden our class of isomorphisms to capture multiplicative exchange
too.

\begin{definition} \label{def:tensor-merge}
  Let $f : \bigoplus_i A_i \rightarrow \bigoplus_j B_j$ and $g : \bigoplus_k C_k \rightarrow \bigoplus_l D_l$ be morphism generators in $\Gamma$ (diagrams with a single seam), where $A_i, B_j, C_k, D_l \in \Ob \Gamma$.
  A \emph{tensor merge} from $\alpha = E_{BD}^{-1} \circ (\bigoplus_l f D_l) \circ E_{AD} \circ (\bigoplus_i A_i g)$ to $\beta = f g$
  is a function $\gamma : [0, 1] \rightarrow D(\Gamma)$ (where $D(\Sigma)$ is the set of sheet diagrams on $\Sigma$) such that:
  \begin{itemize}
  \item $\gamma(0) = \alpha$ and $\gamma(1) = \beta$;
  \item the restriction of $\gamma$ on $[0, t)$ for all $0 < t < 1$ is a regular isomorphism of sheet diagrams
  \item for each seam $s \in \alpha$, $\lim_{t \rightarrow 1} s(t)$ is the unique seam of $\beta$;
  \item for each sheet $s \in \alpha$ with one end on the boundary of the diagram, $\lim_{t \rightarrow 1} s(t)$ is the unique sheet in $\beta$ connected to the same boundary at the same ordinal position;
  \item for each sheet $s \in \alpha$ not connected to the boundary, $\lim_{t \rightarrow 1} s(t)$ is the unique seam of $\beta$;
  \item for each node $n \in \alpha$ on a seam $s$, $\lim_{t \rightarrow 1} n(t)$ is a node in the unique seam of $\beta$, with the same ordinal position;
  \item for each wire $w$ on a sheet $s \in \alpha$ that connects to the boundary, $\lim_{t \rightarrow 1} w(t)$ is a wire on $\lim_{t \rightarrow 1} s(t)$ with the same ordinal position.
  \end{itemize}
  Similarly one can define a tensor merge from $(\bigoplus_j B_j g) \circ E_{BC}^{-1} \circ (\bigoplus_k f C_k) \circ E_{AC}^{-1}$ to $fg$. Finally, a \emph{tensor explosion} $\gamma : \alpha \rightarrow \beta$ is a tensor merge in reverse, i.e. when $t \mapsto \gamma(1-t)$ is a tensor merge.
\end{definition}

For example, the following are steps of a tensor merge:
\begin{figure}[H]
  \centering
  \begin{subfigure}{0.3\textwidth}
    \centering
    \scalebox{0.8}{
      
    }
    \caption{$\gamma(0)$}
  \end{subfigure}
  \begin{subfigure}{0.3\textwidth}
    \centering
    \scalebox{0.8}{
\begingroup%
  \makeatletter%
  \providecommand\color[2][]{%
    \errmessage{(Inkscape) Color is used for the text in Inkscape, but the package 'color.sty' is not loaded}%
    \renewcommand\color[2][]{}%
  }%
  \providecommand\transparent[1]{%
    \errmessage{(Inkscape) Transparency is used (non-zero) for the text in Inkscape, but the package 'transparent.sty' is not loaded}%
    \renewcommand\transparent[1]{}%
  }%
  \providecommand\rotatebox[2]{#2}%
  \newcommand*\fsize{\dimexpr\f@size pt\relax}%
  \newcommand*\lineheight[1]{\fontsize{\fsize}{#1\fsize}\selectfont}%
  \ifx\svgwidth\undefined%
    \setlength{\unitlength}{78.60258361bp}%
    \ifx\svgscale\undefined%
      \relax%
    \else%
      \setlength{\unitlength}{\unitlength * \real{\svgscale}}%
    \fi%
  \else%
    \setlength{\unitlength}{\svgwidth}%
  \fi%
  \global\let\svgwidth\undefined%
  \global\let\svgscale\undefined%
  \makeatother%
  \begin{picture}(1,1.34519624)%
    \lineheight{1}%
    \setlength\tabcolsep{0pt}%
    \put(0,0){\includegraphics[width=\unitlength,page=1]{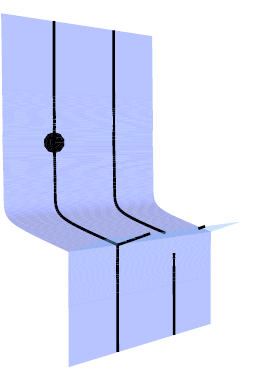}}%
    \put(0.07538293,0.78503223){\color[rgb]{0,0,0}\makebox(0,0)[t]{\lineheight{0}\smash{\begin{tabular}[t]{c}$g$\end{tabular}}}}%
    \put(0,0){\includegraphics[width=\unitlength,page=2]{tensor-merge-intermediate.pdf}}%
    \put(0.54971723,0.79768292){\color[rgb]{0,0,0}\makebox(0,0)[t]{\lineheight{0}\smash{\begin{tabular}[t]{c}$g$\end{tabular}}}}%
    \put(0.55157181,0.36536409){\color[rgb]{0,0,0}\makebox(0,0)[t]{\lineheight{0}\smash{\begin{tabular}[t]{c}$f$\end{tabular}}}}%
    \put(0,0){\includegraphics[width=\unitlength,page=3]{tensor-merge-intermediate.pdf}}%
  \end{picture}%
\endgroup%

    }
    \caption{$\gamma(\frac{1}{2})$}
  \end{subfigure}
  \begin{subfigure}{0.3\textwidth}
    \centering
    \scalebox{0.8}{
\begingroup%
  \makeatletter%
  \providecommand\color[2][]{%
    \errmessage{(Inkscape) Color is used for the text in Inkscape, but the package 'color.sty' is not loaded}%
    \renewcommand\color[2][]{}%
  }%
  \providecommand\transparent[1]{%
    \errmessage{(Inkscape) Transparency is used (non-zero) for the text in Inkscape, but the package 'transparent.sty' is not loaded}%
    \renewcommand\transparent[1]{}%
  }%
  \providecommand\rotatebox[2]{#2}%
  \newcommand*\fsize{\dimexpr\f@size pt\relax}%
  \newcommand*\lineheight[1]{\fontsize{\fsize}{#1\fsize}\selectfont}%
  \ifx\svgwidth\undefined%
    \setlength{\unitlength}{78.13968bp}%
    \ifx\svgscale\undefined%
      \relax%
    \else%
      \setlength{\unitlength}{\unitlength * \real{\svgscale}}%
    \fi%
  \else%
    \setlength{\unitlength}{\svgwidth}%
  \fi%
  \global\let\svgwidth\undefined%
  \global\let\svgscale\undefined%
  \makeatother%
  \begin{picture}(1,1.35342133)%
    \lineheight{1}%
    \setlength\tabcolsep{0pt}%
    \put(0,0){\includegraphics[width=\unitlength,page=1]{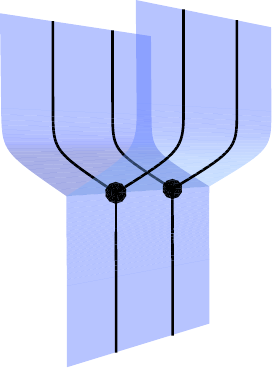}}%
    \put(0.33546903,0.5356267){\color[rgb]{0,0,0}\makebox(0,0)[t]{\lineheight{0}\smash{\begin{tabular}[t]{c}$g$\end{tabular}}}}%
    \put(0.5545723,0.55534887){\color[rgb]{0,0,0}\makebox(0,0)[t]{\lineheight{0}\smash{\begin{tabular}[t]{c}$f$\end{tabular}}}}%
  \end{picture}%
\endgroup%

    }
    \caption{$\gamma(1)$}
  \end{subfigure}
\end{figure}

We can extend the notions of tensor merges and tensor explosions to wider contexts,
where the seams to merge or explode are parts of a larger diagram.

\begin{definition}
  Let $\gamma : \alpha \rightarrow \beta$ be a tensor merge and $C(x)$ be a sheet diagram
  with a hole, such that $C(\alpha)$ is a valid sheet diagram. Since $\alpha$ and $\beta$ have
  the same domain and codomain, $C(\beta)$ is also a valid sheet diagram.
  The function $C(\gamma) : [0,1] \rightarrow C(\Sigma)$ defined by $C(\gamma) : t \mapsto C(\gamma(t))$ is called a \emph{tensor merge in context}. Similarly, we define tensor explosions in context.
\end{definition}

\begin{lemma} \label{lemma:tensor-merge-explosion-in-context}
  For all tensor merges or explosions in context $C(\gamma) : C(\alpha) \rightarrow C(\beta)$,
  $C(\alpha)$ and $C(\beta)$ are equal as bimonoidal morphisms.
\end{lemma}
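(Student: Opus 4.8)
The plan is to reduce the in-context statement to the corresponding out-of-context statement, and then to verify the latter by a direct computation in $\overline{\Sigma}$ using the exchange law for $\cdot$ together with the naturality of the distributors.

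First I would record that the interpretation of a sheet diagram as a bimonoidal morphism is compositional. It is determined by the skeleton, as observed in the proof of Lemma~\ref{lemma:regular-isomorphism-preserves-interpretation}, the skeleton construction is functorial for $\circ$ and $\oplus$, and whiskering and sums of sheet diagrams are interpreted as the corresponding products and sums in $\overline{\Sigma}$. A context $C(x)$ is assembled from the diagram in its hole using exactly these operations, so the morphism interpreting $C(D)$ is a fixed function of the morphism interpreting $D$. Consequently, once we know that a bare tensor merge $\gamma : \alpha \to \beta$ satisfies $\alpha = \beta$ as bimonoidal morphisms, the context argument gives $C(\alpha) = C(\beta)$ at once; tensor explosions are handled by symmetry, since they are merges run backwards.

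It then remains to treat the bare case: for morphism generators $f : \bigoplus_i A_i \to \bigoplus_j B_j$ and $g : \bigoplus_k C_k \to \bigoplus_l D_l$ in $\Gamma$, I must show that $\alpha = E_{BD}^{-1} \circ (\bigoplus_l f D_l) \circ E_{AD} \circ (\bigoplus_i A_i g)$ equals the concatenation $\beta = fg$ as a bimonoidal morphism. Interpreted in $\overline{\Sigma}$, $\beta$ is the multiplicative product of $f$ and $g$ conjugated by the normalization isomorphisms of its domain and codomain. Starting from the exchange law, which gives $f \cdot g = (f \cdot 1_{\bigoplus_l D_l}) \circ (1_{\bigoplus_i A_i} \cdot g)$, I would insert the distributors mediating between the unnormalized products $(\bigoplus_i A_i)(\bigoplus_l D_l)$ and their normal forms. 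Naturality of $\delta$, $\delta^{\#}$, $\lambda^*$ and $\rho^*$ rewrites $1 \cdot g$ as the summand-wise whiskering $\bigoplus_i A_i g$ and $f \cdot 1$ as $\bigoplus_l f D_l$, while the passage from $\bigoplus_i \bigoplus_l A_i D_l$ to $\bigoplus_l \bigoplus_i A_i D_l$, required in order to apply $f$ one summand of $g$'s codomain at a time, is precisely the reordering $E_{AD}$, with $E_{BD}^{-1}$ appearing dually on the codomain side. Collecting these identities reproduces the defining formula for $\alpha$.

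The main obstacle is the coherent bookkeeping of the structural isomorphisms, and in particular the reorderings $E$. These act on sums such as $\bigoplus_i \bigoplus_l A_i D_l$ whose summands need not be distinct, so the objects involved are in general not regular and Theorem~\ref{thm:regular-coherence} does not apply directly. The delicate point is to check that each occurrence of $E$ realizes one fixed permutation of summands, and that on the two sides of the desired equation the composites of structural maps induce the same underlying permutation of the multiset of summands; symmetric monoidal coherence for the additive structure then forces the two composites to agree. Every remaining equality is an instance of naturality of a distributor or of the exchange law, so once the permutation bookkeeping is pinned down the computation is routine.
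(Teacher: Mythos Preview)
Your proposal is correct and follows the same two-step structure as the paper: establish equality of $\alpha$ and $\beta$ in the bare case via the multiplicative exchange law, then extend to contexts by compositionality of the interpretation. The paper's own proof is a terse two-sentence version of this (``equal by multiplicative exchange'' for the bare case, ``by composition'' for the extension), so your elaboration of the distributor-naturality bookkeeping and the handling of the reordering maps $E$ supplies exactly the details the paper leaves implicit.
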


\begin{proof}
  By Definition~\ref{def:tensor-merge}, the start and end diagrams of tensor merges or explosions
  are equal by multiplicative exchange. By composition, this extends to contexts.
\end{proof}

\noindent We can now define our most general notion of isotopy for sheet diagrams.

\begin{definition} \label{def:bimonoidal-isotopy}
  A \emph{bimonoidal isotopy} between sheet diagrams $D_1, D_2$ is a function $\gamma : [0,1] \rightarrow D(\Sigma)$ such that $\gamma(0) = D_1$, $\gamma(1) = D_2$ and for all $t \in [0,1]$, there exists $\varepsilon > 0$ such that on $[t-\varepsilon, t]$, $\gamma$ is either a regular isomorphism or a tensor merge, and on $[t, t+\varepsilon]$, $\gamma$ is either a regular isomorphism or a tensor explosion.
\end{definition}

\begin{lemma} \label{lemma:isotopy-preserves-interpretation}
  Bimonoidal isotopy preserves the interpretation of diagrams.
\end{lemma}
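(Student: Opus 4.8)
The plan is to show that the map sending each time $t \in [0,1]$ to the interpretation of the sheet diagram $\gamma(t)$ as a morphism of $\overline{\Sigma}$ is \emph{locally constant}, and then to invoke connectedness of $[0,1]$. Write $\iota(t) \in \overline{\Sigma}(\dom D_1, \cod D_1)$ for the interpretation of $\gamma(t)$; since every $\gamma(t)$ shares the same domain and codomain (these being preserved by regular isomorphisms, tensor merges and tensor explosions), this is a well-defined function into a fixed hom-set, which carries no topology. Hence it suffices to prove that $\iota$ is constant on a neighbourhood of each point.

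First I would record that each of the three local building blocks keeps $\iota$ constant across the \emph{whole} interval on which it occurs, not merely at its endpoints. For a regular isomorphism this is immediate: a regular isomorphism is itself a continuous family, so restricting and reparametrising it yields a regular isomorphism between the diagrams at any two of its times, and Lemma~\ref{lemma:regular-isomorphism-preserves-interpretation} then equates their interpretations. For a tensor merge $\gamma'$ (and symmetrically a tensor explosion), Definition~\ref{def:tensor-merge} tells us that the restriction of $\gamma'$ to $[0,s)$ is a regular isomorphism for every $s < 1$, so all of the interior diagrams $\gamma'(s)$ with $s < 1$ are regularly isomorphic to $\gamma'(0) = \alpha$ and therefore share its interpretation; the remaining endpoint value $\gamma'(1) = \beta$ agrees with $\alpha$ by Lemma~\ref{lemma:tensor-merge-explosion-in-context} applied with the trivial context. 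Thus $\iota$ is constant along any regular isomorphism, tensor merge, or tensor explosion.

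Next I would assemble these facts at a fixed $t$. By Definition~\ref{def:bimonoidal-isotopy} there is $\varepsilon > 0$ such that the restriction of $\gamma$ to $[t-\varepsilon, t]$ is a regular isomorphism or a tensor merge, and its restriction to $[t, t+\varepsilon]$ is a regular isomorphism or a tensor explosion. By the previous paragraph $\iota$ is constant on each of these two subintervals, and the two constants coincide because both intervals contain $t$. Consequently $\iota$ is constant on $[t-\varepsilon, t+\varepsilon]$, so $\iota$ is locally constant. Since $[0,1]$ is connected, $\iota$ is constant, and in particular $\iota(0) = \iota(1)$, i.e.\ $D_1$ and $D_2$ have the same interpretation.

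Finally, the one point that needs genuine care --- the place I expect to do the real work --- is the claim that a tensor merge keeps the interpretation constant all the way to its degenerate endpoint, where seams collide and the combinatorial type of the diagram changes. The interior constancy follows from regular isomorphism, but the jump to $\gamma'(1) = \beta$ genuinely relies on the multiplicative exchange identity packaged in Lemma~\ref{lemma:tensor-merge-explosion-in-context}; one must check that the limit conditions listed in Definition~\ref{def:tensor-merge} really do identify $\gamma'(1)$ with the diagram $\beta = fg$ whose interpretation that lemma controls, so that no interpretation-changing rearrangement is hidden in the collision. Everything else is a routine gluing argument via local constancy.
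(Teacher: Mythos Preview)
Your proposal is correct and follows essentially the same approach as the paper: reduce to local constancy of the interpretation and invoke connectedness of $[0,1]$, using Lemma~\ref{lemma:regular-isomorphism-preserves-interpretation} for regular isomorphisms and Lemma~\ref{lemma:tensor-merge-explosion-in-context} for tensor merges and explosions. Your version is in fact more explicit than the paper's, which is terse; in particular you spell out why the interpretation stays constant throughout a tensor merge and not merely at its endpoints.

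One small slip: when you invoke Lemma~\ref{lemma:tensor-merge-explosion-in-context} you say ``applied with the trivial context''. In Definition~\ref{def:bimonoidal-isotopy} the local pieces are tensor merges or explosions \emph{in context} (otherwise the definition would only cover single-seam diagrams), so you should invoke the lemma in its full generality, with the ambient context $C$ rather than the trivial one. This does not affect the structure of your argument at all---the lemma is stated precisely to handle that case---but the phrasing should be corrected.
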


\begin{proof}
  Since $[0,1]$ is connected, it is enough to show that the interpretation of $\gamma(t)$ is locally constant for all $t \in [0,1]$.
  By Lemma~\ref{lemma:regular-isomorphism-preserves-interpretation}, the interpretation is constant during regular isomorphisms
  and by Lemma~\ref{lemma:tensor-merge-explosion-in-context}, tensor merges and explosions in context also preserve interpretation.
\end{proof}

\begin{lemma} \label{lemma:compositions-respect-isotopy}
  Composition, sum and tensor of sheet diagrams all respect bimonoidal isotopy.
\end{lemma}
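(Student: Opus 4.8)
The plan is to establish each of the three operations separately, and in each case to reduce the problem to the generating pieces out of which a bimonoidal isotopy is built. First I would observe that, since bimonoidal isotopies can be concatenated, it suffices to treat one argument at a time: to isotope $D_2 \circ D_1$ to $D_2' \circ D_1'$ I would first vary the first argument while holding $D_2$ fixed and then vary the second, and likewise for $\oplus$ and $\otimes$. By Definition~\ref{def:bimonoidal-isotopy} and compactness of $[0,1]$, any bimonoidal isotopy decomposes into finitely many segments, each of which is a regular isomorphism, a tensor merge, or a tensor explosion; so it is enough to show that each operation, applied in a single argument to one such generating segment with the other argument held fixed, produces a bimonoidal isotopy.

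For composition and sum this is straightforward because the skeleton construction is compatible with them: $S(D_2 \circ D_1) = S(D_2) \circ S(D_1)$ and $S(D_1 \oplus D_2) = S(D_1) \oplus S(D_2)$. A regular isomorphism deforming $D_1$ while fixing its domain and codomain therefore extends to a regular isomorphism of $D_2 \circ D_1$, obtained by running the given deformation of the skeleton, node positions, and wire paths on the $D_1$-part and leaving the $D_2$-part constant; the boundary conditions are preserved throughout because the shared boundary is held fixed. A tensor merge or explosion of $D_1$ becomes, after composing or summing with the fixed $D_2$, literally a tensor merge or explosion in context in the sense of the definition preceding Lemma~\ref{lemma:tensor-merge-explosion-in-context}, with $C(x) = D_2 \circ x$ or $C(x) = D_2 \oplus x$; hence the result is again a bimonoidal isotopy. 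The same arguments apply symmetrically in the remaining argument.

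For the tensor product I would not argue directly but instead exploit its definition as a derived operation. By Definition~\ref{def:sheet-diagram-tensor}, $f \otimes g$ is a composite, along the fixed reordering isomorphisms $E_{AD}$ and $E_{BD}^{-1}$, of sums of whiskerings $f D_l$ and $A_i g$; under an isotopy of $f$ or of $g$ the domain and codomain are unchanged, so the indexing objects $A_i, B_j, C_k, D_l$ and the reordering isomorphisms stay fixed and the shape of this formula is preserved. It therefore suffices to show that left and right whiskering by a fixed object respect bimonoidal isotopy, and then to assemble the result from the already-established cases of $\oplus$ and $\circ$. Whiskering is handled exactly as composition and sum were: by Definition~\ref{def:sheet-diagram-whiskering} it uniformly inserts one extra node on each seam and one extra wire on each sheet and renames the symmetries, an operation that commutes with the deformations defining regular isomorphisms and that turns a tensor merge or explosion into one in context, so it sends each generating segment to a bimonoidal isotopy.

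The main obstacle I expect is the tensor case, and specifically the bookkeeping required to see that whiskering and the assembly of the $\otimes$-formula genuinely send tensor merges and explosions to tensor merges and explosions in context: one must check that surrounding a collapsing seam with the extra whiskered node and wire, and with the fixed diagrams $\bigoplus_l (-) D_l$, the $E$'s, and $\bigoplus_i A_i (-)$, yields a valid context $C(x)$ of the kind required by Lemma~\ref{lemma:tensor-merge-explosion-in-context}, and that the ordinal-position and limit conditions in Definition~\ref{def:tensor-merge} remain satisfied after this embedding. Once this is verified, the three statements follow by concatenating the per-segment isotopies.
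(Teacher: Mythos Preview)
Your proposal is correct and follows essentially the same approach as the paper: handle one argument at a time by sequential concatenation, treat composition and sum by extending the given isotopy to the larger context (as a merge or explosion in context when needed), and handle the tensor by unfolding its derived definition and running the isotopy on each whiskered copy in turn. Your treatment is more detailed than the paper's terse proof---in particular your explicit use of compactness to obtain a finite segment decomposition and your separate verification that whiskering respects isotopy---but the underlying strategy is the same.
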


\begin{proof}
  Given two sheet diagrams $\alpha, \beta$ and bimonoidal isotopies $\gamma : \alpha \rightarrow \alpha'$, $\gamma' : \beta \rightarrow \beta'$, we obtain a bimonoidal isotopy from $\alpha \oplus \beta$ to $\alpha' \oplus \beta'$ by first running $\gamma$ while $\beta$ stays still, then running $\gamma'$ while $\alpha'$ stays still. Note that we do not run both transformations in parallel because our definition of bimonoidal isotopy only allows for one tensor merge or explosion at a time. The case for the composition of diagrams is similar. By Definition~\ref{def:sheet-diagram-tensor}, the diagram $\alpha \otimes \beta$ contains in general multiple copies of $\alpha$ and $\beta$: we obtain an isotopy by running $\gamma$ on each copy of $\alpha$ in sequence, and then $\gamma'$ on copies of $\beta$.
\end{proof}

\begin{definition}
  The \emph{category of sheet diagrams} $D(\Sigma)$ has sums of products of object symbols from $\Sigma$ as objects, and equivalence classes of sheet diagrams under sheet diagram isotopy as morphisms. Domains and codomains are given by Definition~\ref{def:sheet-diagram-domain}, composition by Definition~\ref{def:sheet-diagram-composition}.
  It has a symmetric monoidal structure $\oplus$ given by Definition~\ref{def:sheet-diagram-sum}.
\end{definition}

To equip $D(\Sigma)$ with a multiplicative monoidal structure, we need to show that our tensor product
(Definition~\ref{def:sheet-diagram-tensor}) satisfies the exchange law.
Tensor merges and explosions are only defined for morphism generators in $\Gamma$ (single seams), not arbitrary diagrams, so we cannot just use one tensor merge followed by one tensor explosion in general.

\begin{lemma} \label{lemma:slice-decomposition}
  Any diagram $f \in D(\Sigma)$ can be written in general position,
  such that no two seams or additive symmetries are at the same
  height, up to bimonoidal isotopy. It can therefore be expressed as a
  sequential composition of \emph{slices}, which are sums of at most
  one seam or additive symmetry and a finite number of identities.
\end{lemma}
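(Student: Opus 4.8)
The plan is to run a standard general-position (Morse-theoretic) argument on the \emph{height function} of the skeleton, and then transport the resulting decomposition up to the sheet diagram. Recall that the skeleton $S = S(f)$ is an anchored progressive plane diagram on $\Gamma$; its only features relevant to the vertical coordinate are its vertices (the seams) and the crossings of edges (the additive symmetries $\gamma$). I will call both of these the \emph{critical points} of $S$. There are finitely many of them.

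First I would perturb $S$ so that all critical points lie at distinct heights. Concretely, pick a small anchored ambient isotopy of the plane, supported away from the boundary, whose effect is to translate each seam and each crossing vertically by a generic amount. The configurations in which two critical points share a height form a finite union of hyperplanes in the space of height assignments, hence a set of codimension at least one; a generic choice of perturbation avoids all of them. This perturbation is realized as a \emph{regular isomorphism of sheet diagrams}: the skeleton is deformed by the anchored isotopy, while the node positions $x_{vi}$ and wire trajectories $\gamma_{ei}$ are dragged along continuously, so that at each time the data remain a valid sheet diagram (the wires stay fibrewise ordered on each sheet and the two boundary conditions persist, since they are tracked by the open-graph isomorphism of Lemma~\ref{lemma:open-graph-iso}). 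In particular this is a bimonoidal isotopy, so we may assume from now on that all critical heights of $S$ are distinct.

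Next I would slice. Order the critical heights as $0 < h_1 < \dots < h_m < 1$ and choose cut levels $0 = c_0 < c_1 < \dots < c_m = 1$ with $c_{k-1} < h_k < c_k$, so that each $c_k$ avoids every critical point and each open strip between $c_{k-1}$ and $c_k$ contains exactly one critical point. Restricting $S$ to such a strip yields a progressive plane diagram $S_k$ containing a single seam or a single crossing, with all remaining edges passing straight through as identity wires; because $\oplus$ on $\Gamma$ is horizontal juxtaposition and $S(D_1 \oplus D_2) = S(D_1) \oplus S(D_2)$, such an $S_k$ is precisely a sum of identities with at most one seam or one additive symmetry, i.e.\ a \emph{slice}. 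Since $S$ is the vertical composite $S_m \circ \dots \circ S_1$ and $S(D_2 \circ D_1) = S(D_2) \circ S(D_1)$, lifting each $S_k$ to the corresponding sliced sheet diagram $f_k$ (inheriting the nodes and wires from $f$) expresses $f = f_m \circ \dots \circ f_1$ by Definition~\ref{def:sheet-diagram-composition}.

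The main obstacle is the bookkeeping at the two interfaces of the argument rather than any deep idea. On the perturbation side, one must check that dragging the nodes and wires along a nontrivial deformation of the skeleton never violates the fibrewise ordering $\gamma_{ei}(t) < \gamma_{ej}(t)$ or the boundary conditions, so that the path is genuinely a regular isomorphism at every intermediate time; this is where the anchoring and the finiteness of the critical points are used. On the slicing side, one must verify that cutting $f$ at a generic height $c_k$ produces two valid sheet diagrams whose composite is $f$: the cut meets each sheet in a finite set of wire points and no seam, so each restriction satisfies the sheet-diagram conditions, and the linear-interpolation gluing of Definition~\ref{def:sheet-diagram-composition} reassembles the pieces. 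Once these two verifications are in place, the statement follows.
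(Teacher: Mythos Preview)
Your proposal is correct and follows essentially the same approach as the paper: the paper's proof is a single sentence noting that this is a straightforward generalization of the corresponding result for symmetric monoidal string diagrams in \cite{joyal1991geometry-1}, and what you have written is precisely that generalization carried out in detail---perturbing the skeleton into general position via an anchored isotopy (which lifts to a regular isomorphism of sheet diagrams since the extra node and wire data are fibred over the skeleton) and then cutting at regular heights.
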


\begin{proof}
  This is a straightforward generalization of the same result for
  symmetric monoidal string diagrams, which can be found in \cite{joyal1991geometry-1}.
\end{proof}

\begin{lemma} \label{lemma:tensor-slice}
  Let $f : \bigoplus_i A_i \rightarrow \bigoplus_j B_j$ and $g : \bigoplus_k C_k \rightarrow \bigoplus_l D_l$ be slices, where $A_i, B_j, C_k, D_l \in \Ob \Gamma$.
  Then there is a bimonoidal isotopy between $E_{BD}^{-1} \circ (\bigoplus_l f D_l) \circ E_{AD} \circ (\bigoplus_i A_i g)$ and $(\bigoplus_j B_j g) \circ E^{-1}_{BC} \circ (\bigoplus_k f C_k) \circ E^{-1}_{AC}$.
\end{lemma}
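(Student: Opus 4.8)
The plan is to case-split on how many of the two slices $f$ and $g$ actually contain a seam, since by Lemma~\ref{lemma:slice-decomposition} a slice is a sum of identities together with at most one seam or one additive symmetry. Write $\alpha = E_{BD}^{-1} \circ (\bigoplus_l f D_l) \circ E_{AD} \circ (\bigoplus_i A_i g)$ and $\beta = (\bigoplus_j B_j g) \circ E^{-1}_{BC} \circ (\bigoplus_k f C_k) \circ E^{-1}_{AC}$ for the two expressions; these are the two candidate definitions of $f \otimes g$ from Definition~\ref{def:sheet-diagram-tensor}. The guiding principle is that the only genuinely non-regular move, a tensor merge or explosion, is needed exactly when both $f$ and $g$ contribute a seam, because that is the only situation in which a multiplicative exchange of two nontrivial morphism generators is forced.

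First I would treat the case where at most one of $f, g$ contains a seam (the other being a sum of identities and at most one $\oplus$-symmetry). Here I claim $\alpha$ and $\beta$ are related by a single regular isomorphism. The point is that every morphism occurring in $\alpha$ and $\beta$ apart from the lone seam is built from the $\oplus$-symmetry (the reorderings $E$, together with any whiskered symmetry $\gamma_{U,V} \mapsto \gamma_{AU,AV}$), so commuting these past the single seam is an instance of naturality of the symmetry and reduces to symmetric monoidal bookkeeping. Concretely I would check that the skeletons $S(\alpha)$ and $S(\beta)$ denote the same morphism of the free symmetric monoidal category $\C$ and invoke Theorem~\ref{thm:joyal-street} to produce an anchored isomorphism between them; this lifts to a regular isomorphism of sheet diagrams because no seam carries a nontrivial interaction, so the node and wire data are matched canonically.

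Next I would handle the case where $f$ contains a seam $\phi$ and $g$ contains a seam $\psi$. Since every summand $A_i$ is paired with every summand $C_k$ in the domain $\bigoplus_i \bigoplus_k A_i C_k$, there is exactly one block $A_{i_0} C_{k_0}$ in which both $\phi$ and $\psi$ are active, while in every other block at most one of them is nontrivial. I would use regular isotopies to slide the remaining (singly active or identity) blocks out of the way and bring the $\phi, \psi$ pair of the distinguished block into the pure generator-tensor configuration of Definition~\ref{def:tensor-merge}, gathering all other blocks into a context $C(-)$. A tensor merge in context then carries this configuration to $C(\phi\psi)$, a tensor explosion in context carries $C(\phi\psi)$ to the corresponding block of $\beta$, and further regular isotopies reassemble $\beta$. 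Concatenating regular isomorphisms with one merge and one explosion yields a bimonoidal isotopy in the sense of Definition~\ref{def:bimonoidal-isotopy} connecting $\alpha$ to $\beta$; that it preserves interpretation follows from Lemma~\ref{lemma:tensor-merge-explosion-in-context}.

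The main obstacle is this both-seams case, and specifically the presence of identity summands inside the slices: because $f$ and $g$ need not be bare $\Gamma$-generators, Definition~\ref{def:tensor-merge} does not apply verbatim, and one must carve out the context $C(-)$ so that the distinguished block matches the generator--generator tensor formula exactly, checking that the reorderings $E$ restrict compatibly to the context and to the block. The structural case also hides a bookkeeping step, namely verifying that the two composites of $\oplus$-symmetries realize the same permutation of summands before Theorem~\ref{thm:joyal-street} can be applied; this is routine but must be carried out with care about the index orderings induced by $E_{AD}, E_{AC}$ and their counterparts.
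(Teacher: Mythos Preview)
Your strategy of splitting on the number of seams is different from the paper's, which runs an induction on the total number of identity summands in $f$ and $g$, peeling them off one at a time until only bare seams or symmetries remain. Your Case~1 (at most one seam) is essentially correct, though the phrase ``the lone seam'' is misleading: there are in fact $|I|$ (or $|L|$, etc.) whiskered copies of the seam in each expression, and the argument hinges on these copies carrying matching $\Gamma$-labels because the other slice merely permutes the whiskering objects. This deserves to be made explicit before invoking Theorem~\ref{thm:joyal-street}.

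The real gap is in Case~2. Your claim that ``there is exactly one block $A_{i_0} C_{k_0}$ in which both $\phi$ and $\psi$ are active'' is false in general: a $\Gamma$-generator may have several input and output summands, so $\phi$ consumes a range of $A_i$'s (say $i \in I_\phi$) and $\psi$ a range of $C_k$'s (say $k \in K_\psi$), making the interaction region a whole rectangle $I_\phi \times K_\psi$ of summands rather than a single one. The sub-diagram you must carve out is the full expression $E^{-1} \circ (\bigoplus_{l \in L_\psi} \phi D_l) \circ E \circ (\bigoplus_{i \in I_\phi} A_i \psi)$, i.e.\ exactly the $\alpha$-form of $\phi \otimes \psi$ for the bare generators $\phi,\psi$. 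Even after correcting this, the remaining work --- showing that the surrounding context (the $\psi$-seams whiskered by $A_i$ with $i \notin I_\phi$, the $\phi$-seams whiskered by $D_l$ with $l \notin L_\psi$, and the leftover pieces of the $E$ reorderings) is regularly isotopic to the corresponding context in $\beta$ --- is precisely the combinatorics your sketch omits and that you yourself flag as the ``main obstacle''. The paper's induction sidesteps this entirely: each step slides a single identity summand past the other slice via an explicit regular isotopy (drawn as a planar string-diagram manipulation), so that by the base case no context remains and a bare merge followed by a bare explosion suffices.
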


\begin{proof}
  We proceed by induction on the sum of numbers of identities in the summands of $f$ and $g$.
  When there are no identities in $f$ or $g$, there are three cases. If both $f$ and $g$ are seams, then the two expressions are isotopic via a tensor merge and explosion by construction. If both $f$ and $g$ are symmetries, then the two expressions only consist of symmetries and identities which induce the same permutation of the summands of their domain, so they are isotopic. Finally if one of $f$ and $g$ is a seam and the other is a symmetry, let us assume by symmetry that $f = \gamma_{A_1,A_2}$ and $g$ is a seam. The isotopy holds by pull through moves:

  \begin{figure}[H]
    \centering
    \begin{tikzpicture}[every node/.style={scale=0.8}]
      \node at (-2.5,0) {\import{figures/}{seam-swap-exchange-1.pdf_tex}};
      \node at (0,0) {\Large $=$};
      \node at (2.5,0) {\import{figures/}{seam-swap-exchange-2.pdf_tex}};
    \end{tikzpicture}
  \end{figure}
  \noindent Notice that in this transformation nothing interesting is happening on the third dimension: in the future, we will resort to two dimensional string diagrams for such isotopies.
  
  Now for the inductive case, assume there is an identity in $f = 1_{A_1} \oplus f'$. The isotopy holds
  as follows:
  \begin{figure}[H]
    \centering
    \begin{tikzpicture}
    \node at (0,0) {\def\svgscale{0.7}
    \small
\begingroup%
  \makeatletter%
  \providecommand\color[2][]{%
    \errmessage{(Inkscape) Color is used for the text in Inkscape, but the package 'color.sty' is not loaded}%
    \renewcommand\color[2][]{}%
  }%
  \providecommand\transparent[1]{%
    \errmessage{(Inkscape) Transparency is used (non-zero) for the text in Inkscape, but the package 'transparent.sty' is not loaded}%
    \renewcommand\transparent[1]{}%
  }%
  \providecommand\rotatebox[2]{#2}%
  \newcommand*\fsize{\dimexpr\f@size pt\relax}%
  \newcommand*\lineheight[1]{\fontsize{\fsize}{#1\fsize}\selectfont}%
  \ifx\svgwidth\undefined%
    \setlength{\unitlength}{142.04853191bp}%
    \ifx\svgscale\undefined%
      \relax%
    \else%
      \setlength{\unitlength}{\unitlength * \real{\svgscale}}%
    \fi%
  \else%
    \setlength{\unitlength}{\svgwidth}%
  \fi%
  \global\let\svgwidth\undefined%
  \global\let\svgscale\undefined%
  \makeatother%
  \begin{picture}(1,1.26717255)%
    \lineheight{1}%
    \setlength\tabcolsep{0pt}%
    \put(0,0){\includegraphics[width=\unitlength,page=1]{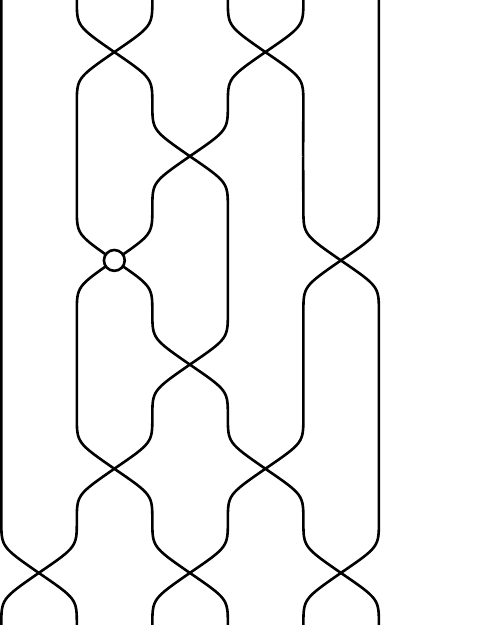}}%
    \put(0.28458585,0.73918399){\makebox(0,0)[lt]{\lineheight{1.25}\smash{\begin{tabular}[t]{l}$f' D_1$\end{tabular}}}}%
    \put(0,0){\includegraphics[width=\unitlength,page=2]{slice-exchange-1.pdf}}%
    \put(0.74393586,0.73918399){\makebox(0,0)[lt]{\lineheight{1.25}\smash{\begin{tabular}[t]{l}$f' D_2$\end{tabular}}}}%
    \put(0,0){\includegraphics[width=\unitlength,page=3]{slice-exchange-1.pdf}}%
    \put(0.13146917,0.10559771){\makebox(0,0)[lt]{\lineheight{1.25}\smash{\begin{tabular}[t]{l}$A_1 g$\end{tabular}}}}%
    \put(0,0){\includegraphics[width=\unitlength,page=4]{slice-exchange-1.pdf}}%
    \put(0.43770253,0.10559771){\makebox(0,0)[lt]{\lineheight{1.25}\smash{\begin{tabular}[t]{l}$A_2 g$\end{tabular}}}}%
    \put(0,0){\includegraphics[width=\unitlength,page=5]{slice-exchange-1.pdf}}%
    \put(0.74393586,0.10559771){\makebox(0,0)[lt]{\lineheight{1.25}\smash{\begin{tabular}[t]{l}$A_3 g$\end{tabular}}}}%
  \end{picture}%
\endgroup%
};
    \node at (2.5, 0) {\Large $=$};
    \node at (5,0) {\def\svgscale{0.7}
    \small
\begingroup%
  \makeatletter%
  \providecommand\color[2][]{%
    \errmessage{(Inkscape) Color is used for the text in Inkscape, but the package 'color.sty' is not loaded}%
    \renewcommand\color[2][]{}%
  }%
  \providecommand\transparent[1]{%
    \errmessage{(Inkscape) Transparency is used (non-zero) for the text in Inkscape, but the package 'transparent.sty' is not loaded}%
    \renewcommand\transparent[1]{}%
  }%
  \providecommand\rotatebox[2]{#2}%
  \newcommand*\fsize{\dimexpr\f@size pt\relax}%
  \newcommand*\lineheight[1]{\fontsize{\fsize}{#1\fsize}\selectfont}%
  \ifx\svgwidth\undefined%
    \setlength{\unitlength}{144.74853086bp}%
    \ifx\svgscale\undefined%
      \relax%
    \else%
      \setlength{\unitlength}{\unitlength * \real{\svgscale}}%
    \fi%
  \else%
    \setlength{\unitlength}{\svgwidth}%
  \fi%
  \global\let\svgwidth\undefined%
  \global\let\svgscale\undefined%
  \makeatother%
  \begin{picture}(1,0.82902396)%
    \lineheight{1}%
    \setlength\tabcolsep{0pt}%
    \put(0,0){\includegraphics[width=\unitlength,page=1]{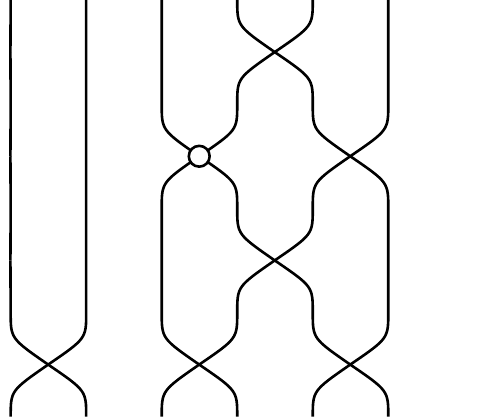}}%
    \put(0.44819109,0.51813997){\makebox(0,0)[lt]{\lineheight{1.25}\smash{\begin{tabular}[t]{l}$f' D_1$\end{tabular}}}}%
    \put(0,0){\includegraphics[width=\unitlength,page=2]{slice-exchange-2.pdf}}%
    \put(0.74871223,0.51813997){\makebox(0,0)[lt]{\lineheight{1.25}\smash{\begin{tabular}[t]{l}$f' D_2$\end{tabular}}}}%
    \put(0,0){\includegraphics[width=\unitlength,page=3]{slice-exchange-2.pdf}}%
    \put(0.1476699,0.10362799){\makebox(0,0)[lt]{\lineheight{1.25}\smash{\begin{tabular}[t]{l}$A_1 g$\end{tabular}}}}%
    \put(0,0){\includegraphics[width=\unitlength,page=4]{slice-exchange-2.pdf}}%
    \put(0.44819109,0.10362799){\makebox(0,0)[lt]{\lineheight{1.25}\smash{\begin{tabular}[t]{l}$A_2 g$\end{tabular}}}}%
    \put(0,0){\includegraphics[width=\unitlength,page=5]{slice-exchange-2.pdf}}%
    \put(0.74871223,0.10362799){\makebox(0,0)[lt]{\lineheight{1.25}\smash{\begin{tabular}[t]{l}$A_3 g$\end{tabular}}}}%
  \end{picture}%
\endgroup%
};
        \node at (-2.5, -5) {\Large $=$};
    \node at (0,-5) {\def\svgscale{0.7}
    \small
\begingroup%
  \makeatletter%
  \providecommand\color[2][]{%
    \errmessage{(Inkscape) Color is used for the text in Inkscape, but the package 'color.sty' is not loaded}%
    \renewcommand\color[2][]{}%
  }%
  \providecommand\transparent[1]{%
    \errmessage{(Inkscape) Transparency is used (non-zero) for the text in Inkscape, but the package 'transparent.sty' is not loaded}%
    \renewcommand\transparent[1]{}%
  }%
  \providecommand\rotatebox[2]{#2}%
  \newcommand*\fsize{\dimexpr\f@size pt\relax}%
  \newcommand*\lineheight[1]{\fontsize{\fsize}{#1\fsize}\selectfont}%
  \ifx\svgwidth\undefined%
    \setlength{\unitlength}{142.04853191bp}%
    \ifx\svgscale\undefined%
      \relax%
    \else%
      \setlength{\unitlength}{\unitlength * \real{\svgscale}}%
    \fi%
  \else%
    \setlength{\unitlength}{\svgwidth}%
  \fi%
  \global\let\svgwidth\undefined%
  \global\let\svgscale\undefined%
  \makeatother%
  \begin{picture}(1,0.8447817)%
    \lineheight{1}%
    \setlength\tabcolsep{0pt}%
    \put(0,0){\includegraphics[width=\unitlength,page=1]{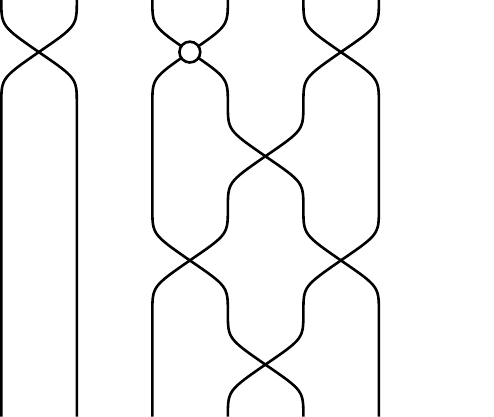}}%
    \put(0.43770253,0.73918399){\makebox(0,0)[lt]{\lineheight{1.25}\smash{\begin{tabular}[t]{l}$A_2 g$\end{tabular}}}}%
    \put(0,0){\includegraphics[width=\unitlength,page=2]{slice-exchange-3.pdf}}%
    \put(0.74393586,0.73918399){\makebox(0,0)[lt]{\lineheight{1.25}\smash{\begin{tabular}[t]{l}$A_3 g$\end{tabular}}}}%
    \put(0,0){\includegraphics[width=\unitlength,page=3]{slice-exchange-3.pdf}}%
    \put(0.13146917,0.73918399){\makebox(0,0)[lt]{\lineheight{1.25}\smash{\begin{tabular}[t]{l}$A_1 g$\end{tabular}}}}%
    \put(0,0){\includegraphics[width=\unitlength,page=4]{slice-exchange-3.pdf}}%
    \put(0.43770253,0.31679314){\makebox(0,0)[lt]{\lineheight{1.25}\smash{\begin{tabular}[t]{l}$f' D_1$\end{tabular}}}}%
    \put(0,0){\includegraphics[width=\unitlength,page=5]{slice-exchange-3.pdf}}%
    \put(0.74393586,0.31679314){\makebox(0,0)[lt]{\lineheight{1.25}\smash{\begin{tabular}[t]{l}$f' D_2$\end{tabular}}}}%
    \put(0,0){\includegraphics[width=\unitlength,page=6]{slice-exchange-3.pdf}}%
  \end{picture}%
\endgroup%
};
        \node at (2.5, -5) {\Large $=$};
    \node at (5,-5) {\def\svgscale{0.7}
    \small
\begingroup%
  \makeatletter%
  \providecommand\color[2][]{%
    \errmessage{(Inkscape) Color is used for the text in Inkscape, but the package 'color.sty' is not loaded}%
    \renewcommand\color[2][]{}%
  }%
  \providecommand\transparent[1]{%
    \errmessage{(Inkscape) Transparency is used (non-zero) for the text in Inkscape, but the package 'transparent.sty' is not loaded}%
    \renewcommand\transparent[1]{}%
  }%
  \providecommand\rotatebox[2]{#2}%
  \newcommand*\fsize{\dimexpr\f@size pt\relax}%
  \newcommand*\lineheight[1]{\fontsize{\fsize}{#1\fsize}\selectfont}%
  \ifx\svgwidth\undefined%
    \setlength{\unitlength}{142.04853191bp}%
    \ifx\svgscale\undefined%
      \relax%
    \else%
      \setlength{\unitlength}{\unitlength * \real{\svgscale}}%
    \fi%
  \else%
    \setlength{\unitlength}{\svgwidth}%
  \fi%
  \global\let\svgwidth\undefined%
  \global\let\svgscale\undefined%
  \makeatother%
  \begin{picture}(1,1.26717255)%
    \lineheight{1}%
    \setlength\tabcolsep{0pt}%
    \put(0,0){\includegraphics[width=\unitlength,page=1]{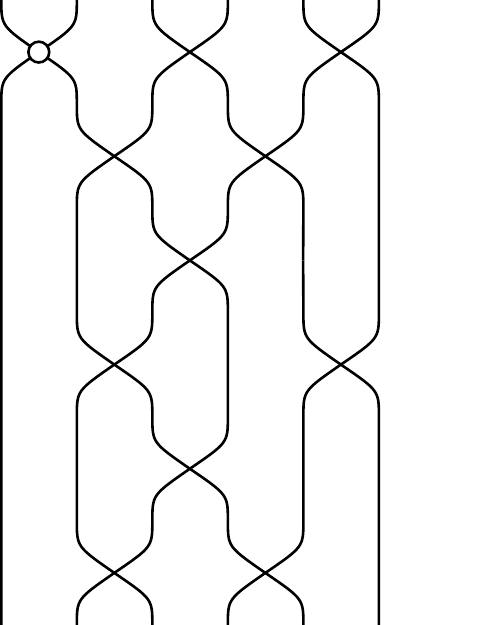}}%
    \put(0.13146917,1.16157483){\makebox(0,0)[lt]{\lineheight{1.25}\smash{\begin{tabular}[t]{l}$A_1 g$\end{tabular}}}}%
    \put(0,0){\includegraphics[width=\unitlength,page=2]{slice-exchange-4.pdf}}%
    \put(0.43770253,1.16157483){\makebox(0,0)[lt]{\lineheight{1.25}\smash{\begin{tabular}[t]{l}$A_2 g$\end{tabular}}}}%
    \put(0,0){\includegraphics[width=\unitlength,page=3]{slice-exchange-4.pdf}}%
    \put(0.74393586,1.16157483){\makebox(0,0)[lt]{\lineheight{1.25}\smash{\begin{tabular}[t]{l}$A_3 g$\end{tabular}}}}%
    \put(0,0){\includegraphics[width=\unitlength,page=4]{slice-exchange-4.pdf}}%
    \put(0.28458585,0.52798856){\makebox(0,0)[lt]{\lineheight{1.25}\smash{\begin{tabular}[t]{l}$f' D_1$\end{tabular}}}}%
    \put(0,0){\includegraphics[width=\unitlength,page=5]{slice-exchange-4.pdf}}%
    \put(0.74393586,0.52798856){\makebox(0,0)[lt]{\lineheight{1.25}\smash{\begin{tabular}[t]{l}$f' D_2$\end{tabular}}}}%
    \put(0,0){\includegraphics[width=\unitlength,page=6]{slice-exchange-4.pdf}}%
  \end{picture}%
\endgroup%
};
    \end{tikzpicture}
  \end{figure}
  \noindent The second equality uses the induction hypothesis on $f'$ and $g$,
  other steps are regular isotopies.
\end{proof}

\begin{lemma} \label{lemma:tensor-recursion}
  Let $f : \bigoplus_i A_i \rightarrow \bigoplus_j B_j$ and $g : \bigoplus_k C_k \rightarrow \bigoplus_l D_l$ be sheet diagrams, where $A_i, B_j, C_k, D_l \in \Ob \Gamma$.
  Then there is a bimonoidal isotopy between $E_{BD}^{-1} \circ (\bigoplus_l f D_l) \circ E_{AD} \circ (\bigoplus_i A_i g)$ and $(\bigoplus_j B_j g) \circ E^{-1}_{BC} \circ (\bigoplus_k f C_k) \circ E^{-1}_{AC}$.
\end{lemma}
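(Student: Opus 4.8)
The plan is to lift the slice-level exchange of Lemma~\ref{lemma:tensor-slice} to arbitrary diagrams by reducing everything to slices. First I would apply Lemma~\ref{lemma:slice-decomposition} to put both $f$ and $g$ in general position and write them as sequential composites of slices, $f = f_m \circ \cdots \circ f_1$ and $g = g_n \circ \cdots \circ g_1$, and then argue by induction on the total number of slices $m + n$. The base case $m = n = 1$ is precisely Lemma~\ref{lemma:tensor-slice}, which already produces a full bimonoidal isotopy between the two defining expressions via a tensor merge followed by a tensor explosion.

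For the inductive step I would peel a single slice off one factor, say $f = f' \circ f_1$ with $f_1$ a slice. The key algebraic inputs are purely formal: right whiskering is functorial, $(f' \circ f_1) D_l = (f' D_l) \circ (f_1 D_l)$, and $\oplus$ is functorial, so that $\bigoplus_l f D_l = (\bigoplus_l f' D_l) \circ (\bigoplus_l f_1 D_l)$; the analogous identities hold for left whiskering and for $g$. Inserting a cancelling pair of reordering isomorphisms $E$ (Definition~\ref{def:reorder}) at the intermediate object $\cod f_1$ then splits the first defining expression $\alpha = E_{BD}^{-1} \circ (\bigoplus_l f D_l) \circ E_{AD} \circ (\bigoplus_i A_i g)$ as an honest composite $\alpha = (f' \otimes \mathrm{id}_{\cod g}) \circ (f_1 \otimes g)$, where both tensors are taken in the same (first) defining form and the whiskering by $\cod g$ collapses to an identity. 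A symmetric peeling of the second expression $\beta$ yields $\beta = (f' \otimes g) \circ (f_1 \otimes \mathrm{id}_{\dom g})$. Since each of these four factors involves strictly fewer slices, the induction hypothesis applies to each, and I would recombine the resulting isotopies using Lemma~\ref{lemma:compositions-respect-isotopy}.

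Conceptually this is best seen as a grid argument: the two expressions $\alpha$ and $\beta$ are two linearisations of the same two-dimensional array whose cells are the slice-pairs $(f_a, g_b)$, with $\alpha$ running the whole of $g$ below the whole of $f$ and $\beta$ running the whole of $f$ below the whole of $g$. The isotopy is assembled by transporting each $f$-slice past each $g$-slice one cell at a time, every elementary swap being an instance of Lemma~\ref{lemma:tensor-slice} applied in a larger context (Lemma~\ref{lemma:tensor-merge-explosion-in-context}), with the individual isotopies concatenated by Lemma~\ref{lemma:compositions-respect-isotopy}. The interchange identities above are exactly the bookkeeping that realises the inductive peeling as such a sequence of adjacent transpositions.

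The main obstacle I anticipate is not conceptual but genuinely delicate bookkeeping around the reordering isomorphisms $E_{XY}$ and the fact that, as already noted before Lemma~\ref{lemma:compositions-respect-isotopy}, the derived tensor of Definition~\ref{def:sheet-diagram-tensor} contains many copies of $f$ and of $g$, one per summand of the opposite factor. The crux is to verify that the whiskering/$\oplus$-functoriality identities together with the naturality of the $E$'s (itself a regular isomorphism realised by symmetries of $\oplus$) really do exhibit both $\alpha$ and $\beta$ as the claimed composites, so that each elementary swap is legitimately a tensor merge or explosion in a \emph{valid} context rather than only formally. Once this grid normal form is established, the induction and the concatenation of the component isotopies are routine.
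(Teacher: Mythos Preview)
Your proposal is correct and follows essentially the same approach as the paper: decompose $f$ and $g$ into slices via Lemma~\ref{lemma:slice-decomposition}, then repeatedly apply Lemma~\ref{lemma:tensor-slice} to exchange neighbouring slices of $f$ and $g$ one at a time until the order is reversed. The paper states this in two sentences and omits the bookkeeping you describe around the $E$'s and the inductive peeling, but your grid argument is precisely the content of its phrase ``exchanging neighbouring slices of $f$ and $g$ until all slices of $f$ are below all slices of $g$.''
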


\begin{proof}
  Up to a regular isotopy, we can assume that $f$ and $g$ are in general position and therefore expressed as a sequential composition of slices. We can then apply Lemma~\ref{lemma:tensor-slice} repeatedly, exchanging neighbouring slices of $f$ and $g$ until all slices of $f$ are below all slices of $g$.
\end{proof}

\begin{lemma} \label{lemma:mult-monoidal-structure}
  $D(\Sigma)$ can be equipped with a monoidal structure $(D(\Sigma), \otimes, I)$, given on objects by $A \otimes B = N(A \cdot B)$ and on morphisms by Definition~\ref{def:sheet-diagram-tensor}
\end{lemma}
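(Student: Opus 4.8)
The plan is to verify that the data $(D(\Sigma), \otimes, I)$ satisfies the axioms of a \emph{strict} monoidal category, so that the associator and unitors are identities and only three things remain: that $\otimes$ is a well-defined bifunctor on isotopy classes, that associativity holds strictly, and that the unit laws hold strictly. Well-definedness of $\otimes$ on isotopy classes is already granted by Lemma~\ref{lemma:compositions-respect-isotopy}, so the real content is functoriality together with the strict equations. Throughout I work up to bimonoidal isotopy, since that is the equivalence relation defining the morphisms of $D(\Sigma)$.

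First I would record two auxiliary facts about the pieces from which Definition~\ref{def:sheet-diagram-tensor} builds $\otimes$. The first is that left and right whiskering are functorial: for a fixed object $X$ one has $X \cdot (f_2 \circ f_1) = (X \cdot f_2) \circ (X \cdot f_1)$ and $X \cdot 1_A = 1_{N(X \cdot A)}$, and symmetrically on the right. This is immediate from Definition~\ref{def:sheet-diagram-whiskering}, as whiskering uniformly adds one node to each seam and one wire to each sheet in the prescribed outermost position, an operation that manifestly commutes with vertical stacking and sends identities to identities. The second is that the reordering isomorphisms $E_{XY}$ of Definition~\ref{def:reorder} are natural in both families of summands: being composites of the symmetry of $\oplus$, their naturality squares are instances of the naturality of the symmetry in the already-established symmetric monoidal structure $(D(\Sigma), \oplus, O)$, hence hold up to regular isotopy by Theorem~\ref{thm:joyal-street}.

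With these in hand, the interchange law $(f_2 \circ f_1) \otimes (g_2 \circ g_1) = (f_2 \otimes g_2) \circ (f_1 \otimes g_1)$ is proved by expanding both sides using Definition~\ref{def:sheet-diagram-tensor}. The left-hand side contains whiskered copies of $f_2 \circ f_1$ and $g_2 \circ g_1$; splitting these by functoriality of whiskering yields whiskered copies of $f_1, f_2, g_1, g_2$ interleaved with $E$ maps. To match the right-hand side I would use Lemma~\ref{lemma:tensor-recursion} to pass freely between the ``$g$ first'' and ``$f$ first'' presentations of each $\otimes$, which lets the whiskerings of the $f_i$ slide past the whiskerings of the $g_j$; naturality of the $E$ maps then absorbs the intervening reorderings, after which the two sides coincide up to bimonoidal isotopy. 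Identity preservation $1_A \otimes 1_B = 1_{N(A \cdot B)}$ follows from the same expansion, since every whiskered factor is then an identity and each $E$ map cancels with its inverse.

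It remains to check strictness, and to identify the hard part. On objects, $(A \otimes B) \otimes C = N(N(A \cdot B) \cdot C) = N(A \cdot N(B \cdot C)) = A \otimes (B \otimes C)$ by Lemma~\ref{lemma:associativity-n}, while $I \otimes A = N(I \cdot A) = N(A) = A = N(A \cdot I) = A \otimes I$ because the objects of $D(\Sigma)$ are already normalized. On morphisms the unit laws $1_I \otimes f = f = f \otimes 1_I$ hold because whiskering by the empty product $I$ adds no nodes or wires, so the defining composite collapses to $f$ with trivial $E$ maps; and associativity $(f \otimes g) \otimes h = f \otimes (g \otimes h)$ follows by unfolding both bracketings, via Lemma~\ref{lemma:tensor-recursion} and the slice decomposition of Lemma~\ref{lemma:slice-decomposition}, into the same layered arrangement of triply whiskered slices separated by reordering isomorphisms. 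The hard part will be the interchange law: the bookkeeping of the many whiskered copies and $E$ maps produced by Definition~\ref{def:sheet-diagram-tensor} is delicate, and the crucial move of sliding the $f$-layers past the $g$-layers rests essentially on Lemma~\ref{lemma:tensor-recursion} rather than on any single regular isotopy; morphism associativity is obstructed by the same phenomenon, since the reindexing built into the two bracketings must be reconciled through naturality of the $E$ maps.
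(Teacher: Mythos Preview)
Your proposal is correct and follows the same approach as the paper: strict associativity and unitality on objects via Lemma~\ref{lemma:associativity-n} and the normalization $N$, and the interchange law via Lemma~\ref{lemma:tensor-recursion}. The paper's own proof is a two-line sketch that names exactly these two lemmas and leaves everything else implicit; your write-up simply unpacks the details (functoriality of whiskering, naturality of the $E$ maps, and the morphism-level unit and associativity equalities) that the paper suppresses.
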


\begin{proof}
  The product on objects is unital ($N(A \cdot I) = N(I \cdot A) = N(A)$) and associative by Lemma~\ref{lemma:associativity-n}.
  By Lemma~\ref{lemma:tensor-recursion}, the exchange law for $\otimes$ is satisfied, hence the result.
\end{proof}

\begin{lemma}
  $D(\Sigma)$ is bimonoidal with $(\otimes, I)$ distributing over $(\oplus, O)$ .
\end{lemma}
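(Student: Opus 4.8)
The plan is to supply the four families of structural isomorphisms required by Definition~\ref{def:bimonoidal}, check that they are natural isomorphisms, and then verify Laplaza's coherence axioms; the decisive observation is that, in $D(\Sigma)$, every structural isomorphism is realized by a \emph{seam-free} sheet diagram, so the whole problem collapses to equalities between permutations of summands. First I would note how much of the structure is already strict because of the left-to-right evaluation order built into $N$. On objects, $O \otimes A = N(O \cdot A) = O = N(A \cdot O) = A \otimes O$ since $N(O)$ is the empty sum, so I take the annihilators $\lambda^*_A$ and $\rho^*_A$ to be $1_O$. Writing $N(A) = \bigoplus_i A_i$, $N(B) = \bigoplus_j B_j$, $N(C) = \bigoplus_k C_k$, the objects $(A \oplus B) \otimes C$ and $(A \otimes C) \oplus (B \otimes C)$ are the \emph{same} ordered sum of products, so I take $\delta^{\#}_{A,B,C} = 1$; thus $D(\Sigma)$ is in fact left-semistrict. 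Only the left distributor is nontrivial: $A \otimes (B \oplus C)$ and $(A \otimes B) \oplus (A \otimes C)$ are the same multiset of summands in two different orders, and I define $\delta_{A,B,C}$ to be the reordering isomorphism between them, built from the symmetry for $\oplus$ exactly as in Definition~\ref{def:reorder} and represented by a sheet diagram containing only additive symmetries and identities. The associators and unitors are identities, since both monoidal structures are strict (Lemma~\ref{lemma:mult-monoidal-structure}).

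Next I would check naturality. For the annihilators it is immediate: for $f : A \to B$ the diagram $O \otimes f$ has domain and codomain $O$ and, having no summands, is the empty diagram $1_O$. For $\delta^{\#}$, naturality is the assertion that right tensoring distributes over $\oplus$ strictly at the level of morphisms, which follows from the definition of the tensor product (Definition~\ref{def:sheet-diagram-tensor}) together with Lemma~\ref{lemma:tensor-recursion}. For $\delta$, naturality in each of $A, B, C$ reduces to naturality of the symmetry of the symmetric monoidal structure $(\oplus, O)$ on $D(\Sigma)$: a morphism of the form $\bigoplus_m h_m$ acts summandwise while $\delta$ merely permutes summands, so the naturality squares commute by the coherence of $\oplus$.

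The heart of the proof, and the step I expect to be the main obstacle, is the verification of Laplaza's coherence axioms. The key simplification is that every structural isomorphism of $D(\Sigma)$---the trivial associators and unitors, the additive symmetry, both distributors, and the annihilators---is represented by a sheet diagram built solely from additive symmetries and identities, with no morphism-generator seam. Hence both sides of each axiom are seam-free sheet diagrams with equal domain and codomain. Two such diagrams inducing the same permutation on the positions of their summands have isomorphic skeletons, hence are regularly isomorphic by Theorem~\ref{thm:joyal-street} and therefore equal in $D(\Sigma)$; conversely a seam-free diagram is determined by the permutation it induces. Thus each coherence axiom reduces to a single identity between two composite permutations of summands.

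Finally I would discharge these permutation identities. The permutation induced by any such composite depends only on the numbers $p, q, r, \dots$ of summands of the object variables and not on the generators filling them, so it suffices to verify each axiom on one representative per shape. I would choose representatives whose objects are regular (Definition~\ref{defi:regular}), so that all summands are pairwise distinct products. Between two orderings of a list of pairwise distinct entries there is at most one type-preserving permutation, so the two sides of each axiom necessarily induce the same permutation and are therefore equal---this is precisely the seam-free instance of Laplaza's Theorem~\ref{thm:regular-coherence}. Combining this with Lemma~\ref{lemma:mult-monoidal-structure} and the symmetric monoidal structure $(\oplus, O)$ shows that $(D(\Sigma), \otimes, I, \oplus, O)$ satisfies Definition~\ref{def:bimonoidal}, as required. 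The genuine difficulty is thus concentrated entirely in the translation of each axiom into a summand permutation; the semistrictness established in the first paragraph ($\delta^{\#}$ and the annihilators being identities) collapses most cases, and regularity resolves the remainder without any case-by-case diagram chase.
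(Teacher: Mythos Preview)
Your proposal is correct and follows essentially the same approach as the paper: define $\lambda^*_A = \rho^*_A = 1_O$, $\delta^{\#}_{A,B,C} = 1$, and $\delta_{A,B,C}$ as the reordering permutation built from additive symmetries, then observe that every structural isomorphism is a seam-free diagram and invoke Laplaza's regular coherence (Theorem~\ref{thm:regular-coherence}) to conclude that the two sides of each axiom induce the same permutation. The only difference is presentational: the paper sorts the axioms into buckets (some trivial because all sides are $1_O$ or identities, some handled by $\oplus$- or $\otimes$-coherence alone, the rest by regular bimonoidal coherence), whereas you give a single uniform reduction to permutation identities; your version also makes naturality explicit, which the paper leaves implicit.
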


\begin{proof}
  The monoidal structure $(D(\Sigma), \oplus, O)$ is given by
  Theorem~\ref{thm:joyal-street}, and the monoidal structure
  $(D(\Sigma), \otimes, I)$ is given by
  Lemma~\ref{lemma:mult-monoidal-structure}.

  Since $N((A \oplus B)C) = N(AC) \oplus N(BC)$, we can define:
  $$\delta^{\#}_{A,B,C} : (A \oplus B) \otimes C \rightarrow A \otimes B \oplus B \otimes C = 1_{N(AC) \oplus N(BC)}$$
  
  For $\delta_{A,B,C} : A \otimes (B \oplus C) \rightarrow A \otimes B \oplus A \otimes C$, decompose $N(A) = \bigoplus_i A_i$ with $A_i \in \Ob \Gamma$.
  We have $$N(A(B \oplus C)) = \bigoplus_i N(A_i(B \oplus C)) = \bigoplus_i N(A_iB) \oplus N(A_i C)$$
  $$N(AC) \oplus N(BC) = \bigoplus_i N(A_i B) \oplus \bigoplus_i N(A_i C)$$
  Therefore we define $\delta_{A,B,C}$ as the reordering map (composition of symmetries) from $\bigoplus_i N(A_iB) \oplus N(A_iC)$
  to $\bigoplus_i N(A_i B) \oplus \bigoplus_i N(A_i C)$.

  Since $N(OA) = N(AO) = O$ for all $A$, we define $\lambda^* : O \otimes A \rightarrow O$ and
  $\rho_A^* : AO \rightarrow O$ as $1_O$ (the empty sheet diagram).

  We now need to check the coherence axioms of Appendix~\ref{app:coherence-axioms}. Let us consider the first axiom:
\[
\begin{tikzcd}[column sep=small]
A(B \oplus C) \arrow[rr, "{\delta_{A,B,C}}"] \arrow[dd, "{1_A \gamma'_{B,C}}"'] &            & AB \oplus AC \arrow[dd, "{\gamma'_{AB,AC}}"] \\
                                                                                & \text{(I)} &                                              \\
A(C\oplus B) \arrow[rr, "{\delta_{A,C,B}}"']                                    &            & AC \oplus AB                                
\end{tikzcd}
\]
In $D(\Sigma)$, all sides of this square are composites of the
additive symmetry $\gamma$.  Therefore it is sufficient to check that
both paths induce the same permutation, by coherence for symmetric
monoidal categories. Equivalently, one can also use coherence for
regular objects of bimonoidal categories
(Theorem~\ref{thm:regular-coherence}) by choosing $A$, $B$ and $C$
as sums of generators all distinct. One can then obtain commutation
for the general case by instantiation (substituting the generators by
the actual summands). The other axioms can be treated in similar ways:

  \begin{itemize}
  \item (I), (V), (VI), (VIII), (IX) hold by bimonoidal coherence for regular objects;
  \item (III) holds since $\delta^{\#} = 1$ and $\gamma_{A,B} 1_C = \gamma'_{AC,BC}$ by definition;
  \item (IV) simplifies thanks to $\delta^{\#} = 1$ and holds by monoidal coherence for $(\oplus, O)$
  \item (VII) simplifies thanks to $\delta^{\#} = 1$ and holds by monoidal coherence for $(\otimes, I)$
  \item (X), (XII), (XIII), (XIV), (XVI), (XVII), (XVIII) hold as all sides equal $1_O$
  \item (XIX), (XX), (XXI), (XXII) hold as all sides are identities
  \item (XXIII) and (XXIV) hold as $\delta_{I,A,B} = 1_{A \oplus B} = \delta^{\#}_{A,B,I}$
  \end{itemize}
\end{proof}

\begin{theorem} \label{thm:equiv-free-diagrams}
  $D(\Sigma)$ and $\overline{\Sigma}$ are bimonoidally equivalent, i.e. $D(\Sigma)$ is the free bimonoidal category on $\Sigma$.
\end{theorem}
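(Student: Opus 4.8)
The plan is to exhibit an explicit pair of mutually inverse bimonoidal functors between $\overline{\Sigma}$ and $D(\Sigma)$, relying on the universal property of $\overline{\Sigma}$ in one direction and on the interpretation of diagrams in the other. Since the preceding lemmas establish that $D(\Sigma)$ is a bimonoidal category, and since $D(\Sigma)$ contains the generators of $\Sigma$ (object generators as singleton sheet diagrams, morphism generators as sheet diagram generators), the freeness of $\overline{\Sigma}$ yields a unique bimonoidal functor $F : \overline{\Sigma} \to D(\Sigma)$ sending each generator to its corresponding diagram. On objects $F$ acts as the normalization $A \mapsto N(A)$, which is forced since the objects of $D(\Sigma)$ are normalized expressions.

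In the other direction, the interpretation of a sheet diagram as a bimonoidal morphism provides a candidate functor $G : D(\Sigma) \to \overline{\Sigma}$, which is well defined on bimonoidal isotopy classes by Lemma~\ref{lemma:isotopy-preserves-interpretation}. I would first check that $G$ is bimonoidal: compatibility with $\oplus$ and $\circ$ is immediate from the definitions and Theorem~\ref{thm:joyal-street}, while compatibility with $\otimes$ requires unwinding Definition~\ref{def:sheet-diagram-tensor} and verifying that the interpretation of the whiskered composite $E_{BD}^{-1} \circ (\bigoplus_l f D_l) \circ E_{AD} \circ (\bigoplus_i A_i g)$ agrees with the bimonoidal tensor of the interpretations; this follows from naturality of the distributors together with the exchange law.

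Next I would verify the two composites. For $G \circ F$, the composite fixes every generator of $\Sigma$ on the nose, so by uniqueness in the universal property of $\overline{\Sigma}$ it agrees with the identity up to the natural isomorphism supplied by the normalization morphisms $n_A : A \to N(A)$; this handles the objects as well, where $G \circ F$ sends $A$ to $N(A)$. For $F \circ G$, I must show that reconstructing a sheet diagram from its interpretation recovers the original diagram up to bimonoidal isotopy, i.e.\ that $F(G(D))$ and $D$ are isotopic for every sheet diagram $D$. On objects this is an equality, since normalized expressions are fixed by $N$.

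The main obstacle is precisely this last step, $F \circ G \cong \mathrm{id}_{D(\Sigma)}$, which is the completeness direction. The strategy is to put $D$ in general position and decompose it into slices via Lemma~\ref{lemma:slice-decomposition}, so that $D$ is a sequential composite of seams, additive symmetries, and identities. Applying $G$ produces a bimonoidal morphism expression built from the corresponding generators and structural isomorphisms, and applying $F$ rebuilds a sheet diagram from that expression. The delicate point is that the tensor product of sheet diagrams is a derived operation (Definition~\ref{def:sheet-diagram-tensor}) rather than a primitive one, so the reconstruction $F(G(D))$ assembles tensors through whiskering and the reordering maps $E_{XY}$ rather than matching the sheet structure of $D$ directly; Lemma~\ref{lemma:tensor-recursion}, together with the tensor merge and explosion isotopies of Definition~\ref{def:tensor-merge}, is exactly the tool that reconciles these two presentations of the tensor up to bimonoidal isotopy. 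Once each slice is shown to be recovered up to isotopy, compatibility of all three operations with isotopy (Lemma~\ref{lemma:compositions-respect-isotopy}) assembles the slice-wise isotopies into a global one, completing the argument that $F$ and $G$ are mutually inverse bimonoidal equivalences.
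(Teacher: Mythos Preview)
Your proposal is correct and follows the same overall strategy as the paper: the interpretation functor $[\cdot]$ in one direction, the freeness-induced functor $F$ in the other, with $n_A : A \to N(A)$ as the natural isomorphism for the composite on the $\overline{\Sigma}$ side. The paper's proof is considerably terser than yours: it dispatches $F \circ [\cdot] = \mathrm{id}_{D(\Sigma)}$ in one line (``identity on objects and morphism generators, therefore by induction identity on all morphisms'') rather than unwinding slice decompositions and tensor merges as you propose, and for $[\cdot] \circ F$ it verifies naturality of $n_A$ by a direct induction on morphism expressions---invoking regular coherence (Theorem~\ref{thm:regular-coherence}) for the structural-isomorphism case---rather than appealing to uniqueness in the universal property.
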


\begin{proof}
  The interpretation of diagrams is a well-defined function $[ \cdot ] : D(\Sigma) \rightarrow \overline{\Sigma}$ by Lemma~\ref{lemma:isotopy-preserves-interpretation} and is a bimonoidal functor by construction.
  For the reverse direction, by freeness of $\overline{\Sigma}$ there is a unique bimonoidal functor $F : \overline{\Sigma} \rightarrow D(\Sigma)$
  mapping each generator in $\Sigma$ to its representation in $D(\Sigma)$.
  \[
  \begin{tikzpicture}
    \node[inner sep=8pt] at (0,0) (a) {$\overline{\Sigma}$};
    \node at (2,0) (b) {$D(\Sigma)$};
    \draw[->,bend left] (a) edge node[above] {$F$} (b);
    \draw[->,bend left] (b) edge node[below] {$[ \cdot ]$} (a);
  \end{tikzpicture}
  \]
  Let us show that these form an equivalence.
  First, $F \circ [ \cdot ]$ is the identity on objects and morphism generators
  and therefore by induction it is the identity on all morphisms.
  Second, $[ \cdot ] \circ F$ is not the identity but $n_A : A \rightarrow N(A)$
  is a natural isomorphism from the identity to $[ \cdot ] \circ F$.
  Its naturality can be shown by induction on $f$:
  \[
\begin{tikzcd}
A \arrow[r, "f"] \arrow[d, "n_A"'] & B \arrow[d, "n_B"] \\
N(A) \arrow[r, "F(f)"']            & N(B)              
\end{tikzcd}
\]
For $f$ a generator, the vertical sides are identities (by assumption
that domains and codomains of generators are normalized, from
Section~\ref{sec:bimonoidal-signatures}). For $f$ a structural
isomorphism, the square commutes by regular coherence.  By induction,
it holds for all morphisms.
\end{proof}

\section{Baez's conjecture}

Recently, a conjecture attributed to Baez was confirmed by
\cite{elgueta2020groupoid}, who showed that the groupoid of finite
sets and bijections is biinitial in the 2-category of bimonoidal categories.  The
category of finite sets has indeed a bimonoidal structure, where disjoint union
of sets is the monoidal addition and cartesian product is the monoidal
multiplication.

This result can also be obtained via string diagrams. Indeed, the free
bimonoidal category on an empty signature, $\overline{\emptyset}$, is biinitial.
This is a direct consequence of the universal property: any bimonoidal
functor from $\overline{\emptyset}$ to a bimonoidal category $\mathcal{C}$ is
determined (up to equivalence) by the image of the generators of
$\overline{\emptyset}$, but there are no such generators.

Therefore, to prove Baez' conjecture it is enough to characterize the
free bimonoidal category on an empty signature. By
Theorem~\ref{thm:equiv-free-diagrams}, $\overline{\emptyset}$ is
bimonoidally equivalent to $D(\emptyset)$.  The objects of
$D(\emptyset)$ are finite sums of the multiplicative monoidal
unit. The morphisms of $D(\emptyset)$ are string diagrams on the empty
signature. We can analyze the geometry of such string diagrams.  All
the sheets in such diagrams are empty: they cannot have any wires on
them, since those wires would need to be annotated by an object from
the signature.  Similarly, the diagrams do not contain any seams,
since each seam contains at least one node which is annotated by a
morphism generator. Therefore, the diagrams are only made of identities
of empty sheets and additive symmetries.

\begin{figure}[H]
    \centering
    \scalebox{0.8}{
      \import{figures/}{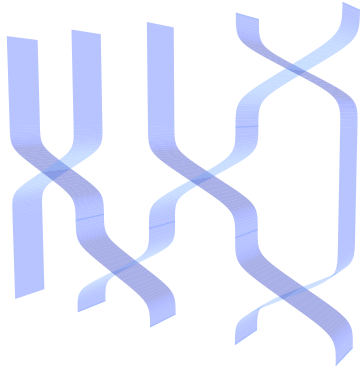}
    }
    \caption{A sheet diagram on the empty signature}
\end{figure}

Hence, a morphism in $D(\emptyset)$ induces a permutation of its domain,
which is equal to its codomain. Furthermore, morphisms in $D(\emptyset)$
are equivalence classes of string diagrams up to bimonoidal equivalence.
Two string diagrams induce the same permutation of their common domain
when their skeletons are isomorphic as symmetric monoidal diagrams,
and therefore when the diagrams are in bimonoidal equivalence. Therefore,
the category $D(\emptyset)$ is equivalent to the groupoid of finite sets,
hence the result.

\bibliographystyle{plainnat}
\bibliography{references}

\appendix

\section{Data structures for sheet diagrams} \label{app:sheetshow}

In this section we give a short primer on the declarative format used to represent
diagrams in SheetShow\footnote{Available at \url{https://wetneb.github.io/sheetshow/} and \url{https://github.com/wetneb/sheetshow} (source)}, the tool we used to render all sheet diagrams in this article.

The first step to understand the format is to explain the data
structure we use to represent monoidal string diagrams.
These are simpler than bimonoidal diagrams as they can be drawn in the plane.

\subsection{Data structure for monoidal string diagrams%
  \label{data-structure-for-monoidal-string-diagrams}%
}

A string diagrams in a monoidal category is in \emph{general position} when no two nodes share the same height. Any string diagram can be put in general position without changing its meaning.
A diagram in general position can be decomposed as a sequence of horizontal \emph{slices}, each of which contains exactly one node.
\begin{figure}[H]
  \centering
  \def\svgscale{0.65}
  \small
  \import{figures/}{monoidal_diagram.pdf_tex}
\end{figure}
\noindent One can therefore encode such a diagram as follows:
\begin{itemize}
\item The number of wires crossing the input boundary (or a list of the objects annotating them, in a typed context);
\item The list of slices, each of which can be described by the following data:
\begin{itemize}
\item The number of wires passing to the left of the node in the slice. We call this the \emph{offset};
\item The number of input wires consumed by the node;
\item The number of output wires produced by the node (or again, their list of types).
\end{itemize}
\end{itemize}

\noindent For the sample diagram above, this gives us the following encoding (with inputs at the bottom of the diagram):
\begin{verbatim}
inputs: 2
slices:
- offset: 0
  inputs: 1
  outputs: 2
- offset: 1
  inputs: 2
  outputs: 1
- offset: 0
  inputs: 2
  outputs: 1
\end{verbatim}

Each slice can be augmented to store details about the morphism in that slice (such as a label or types, for instance).
This data structure is well suited to reason about string diagrams and there are efficient algorithms to determine if two diagrams are equivalent up to exchanges. For more details about this data structure, see~\cite{delpeuch2018normalization-1}.

\subsection{Bimonoidal diagrams}

Sheet diagrams in bimonoidal categories are obtained by extruding symmetric monoidal string diagrams for
the additive monoidal structure $(\mathcal{C}, \oplus, O)$.
Therefore our data structure for bimonoidal diagrams is based on that for monoidal diagrams.


A bimonoidal diagram is described by:
\begin{itemize}
\item The number of input sheets, and the number of input wires on each of these input sheets;
\item The slices of the bimonoidal diagram, which are seams between sheets. They are each described by:
\begin{itemize}
\item The number of sheets passing to the left of the seam. We call this, again, the \emph{offset};
\item The number of input sheets joined by the seam;
\item The number of output sheets produced by the seam;
\item The nodes present on the seam.
\end{itemize}
\end{itemize}

Each seam can have multiple nodes on it. Each of these can connect to some wires on each input sheet
(not necessarily the same number of wires for each input sheet) and similarly for output sheets.
We describe them with the following data:
\begin{itemize}
\item The number of wires passing through the seam without touching a node, to the left of the node being
described. We call this the \emph{offset} of the node;
\item For each input sheet, the number of wires connected to the node;
\item For each output sheet, the number of wires connected to the node.
\end{itemize}
For instance:
\begin{figure}[H]
  \centering
  \begin{subfigure}{0.4\textwidth}
    \centering
\begin{verbatim}
inputs: [ 1, 2, 2 ]
slices:
- offset: 1
  inputs: 1
  outputs: 2
  nodes:
  - offset: 0
    inputs: [ 1 ]
    outputs: [ 1, 1 ]
- offset: 2
  inputs: 2
  outputs: 2
  nodes:
  - offset: 0
    inputs: [ 2, 2 ]
    outputs: [ 1, 1 ]
\end{verbatim}
  \end{subfigure}
  \begin{subfigure}{0.4\textwidth}
      \centering
  \def\svgscale{0.65}
  \small
  \import{figures/}{example-data-structure.pdf_tex}
  \end{subfigure}
\end{figure}

Again, additional metadata can be added on the geometry
to annotate it with labels, types, and represent symmetries for
the additive and multiplicative structures. For more details
about these, consult SheetShow's documentation:
\url{https://sheetshow.readthedocs.org/en/latest/}.

\section{Coherence axioms}
\label{app:coherence-axioms}

The following coherence axioms are taken from \citet{laplaza1972coherence}. Their axioms (II) and (XV) were removed as they only apply to symmetric bimonoidal categories.
In the following, the monoidal structure $(\mathcal{C}, \cdot, I)$ has coherence isomorphisms
\begin{align*}
  \alpha_{A,B,C} & : A(BC) \rightarrow (AB)C \\
  \lambda_A &: IA \rightarrow A \\
  \rho_A &: AI \rightarrow A
\end{align*}
and the monoidal structure $(\mathcal{C}, \oplus, O)$ has coherence isomorphisms
\begin{align*}
  \alpha'_{A,B,C} &: A\oplus (B\oplus C) \rightarrow (A\oplus B) \oplus C \\
  \lambda'_A &: O \oplus A \rightarrow A \\
  \rho'_A &: A \oplus O \rightarrow A \\
  \gamma'_{A,B} &: A \oplus B \rightarrow B \oplus A
\end{align*}

\begin{figure}[H]
  \centering
  \begin{subfigure}{0.49\textwidth}
    \centering

\begin{tikzcd}[column sep=small]
A(B \oplus C) \arrow[rr, "{\delta_{A,B,C}}"] \arrow[dd, "{1_A \gamma'_{B,C}}"'] &            & AB \oplus AC \arrow[dd, "{\gamma'_{AB,AC}}"] \\
                                                                                & \text{(I)} &                                              \\
A(C\oplus B) \arrow[rr, "{\delta_{A,C,B}}"']                                    &            & AC \oplus AB                                
\end{tikzcd}
  \end{subfigure}
  \begin{subfigure}{0.49\textwidth}
    \centering

\begin{tikzcd}[column sep=small]
(A \oplus B) C \arrow[rr, "{\delta^{\#}_{A,B,C}}"] \arrow[dd, "{\gamma'_{A,B} 1_C}"] &              & AC \oplus BC \arrow[dd, "{\gamma'_{AC,BC}}"] \\
                                                                                     & \text{(III)} &                                              \\
(B \oplus A)C \arrow[rr, "{\delta^{\#}_{B,A,C}}"']                                   &              & BC \oplus AC                                
\end{tikzcd}
  \end{subfigure}
  \begin{subfigure}{0.99\textwidth}
\centering

\begin{tikzcd}
(A \oplus (B \oplus C))D \arrow[dd, "{\alpha'_{A,B,C} 1_D}"'] \arrow[rr, "{\delta^{\#}_{A,B\oplus C,D}}"] &  & AD \oplus (B \oplus C)D \arrow[rr, "{1_{AD} \oplus \delta^{\#}_{B,C,D}}"] &  & AD \oplus (BD \oplus CD) \arrow[dd, "{\alpha'_{AD,BD,CD}}"] \\
                                                                                                          &  & \text{(IV)}                                                               &  &                                                             \\
((A \oplus B) \oplus C)D \arrow[rr, "{\delta^{\#}_{A\oplus B,C,D}}"']                                     &  & (A \oplus B)D \oplus CD \arrow[rr, "{\delta^{\#}_{A,B,D}\oplus 1_{CD}}"'] &  & (AD \oplus BD) \oplus CD                                   
\end{tikzcd}

  \end{subfigure}
  \begin{subfigure}{0.99\textwidth}
    \centering

\begin{tikzcd}
A (B \oplus (C \oplus D)) \arrow[dd, "{1_A \alpha'_{B,C,D}}"'] \arrow[rr, "{\delta_{A,B,C\oplus D}}"] &  & AB \oplus A(C\oplus D) \arrow[rr, "{1_{AB} \oplus \delta_{A,C,D}}"] &  & AB\oplus (AC\oplus AD) \arrow[dd, "{\alpha'_{AB,AC,AD}}"] \\
                                                                                                      &  & \text{(V)}                                                          &  &                                                           \\
A((B\oplus C)\oplus D) \arrow[rr, "{\delta_{A, B\oplus C,D}}"']                                       &  & A(B\oplus C)\oplus AD \arrow[rr, "{\delta_{A,B,C}\oplus 1_{AD}}"']  &  & (AB \oplus AC)\oplus AD                                  
\end{tikzcd}

  \end{subfigure}
  \begin{subfigure}{0.99\textwidth}
    \centering
\begin{tikzcd}
A(B(C\oplus D)) \arrow[dd, "{\alpha_{A,B,C\oplus D}}"'] \arrow[rr, "{1_A \delta_{B,C,D}}"] &  & A(BC \oplus BD) \arrow[rr, "{\delta_{A,BC,BD}}"] &  & A(BC)\oplus A(BD) \arrow[dd, "{\alpha_{A,B,C}\oplus \alpha_{A,B,D}}"] \\
                                                                                           &  & \text{(VI)}                                      &  &                                                                       \\
(AB)(C\oplus D) \arrow[rrrr, "{\delta_{AB,C,D}}"']                                         &  &                                                  &  & (AB)C\oplus (AB)D                                                    
\end{tikzcd}
    
  \end{subfigure}
  \begin{subfigure}{0.99\textwidth}
    \centering

\begin{tikzcd}
(A \oplus B)(CD) \arrow[rrrr, "{\delta^{\#}_{A,B,CD}}"] \arrow[dd, "{\alpha_{A\oplus B, C, D}}"'] &  &                                                        &  & A(CD) \oplus B(CD) \arrow[dd, "{\alpha_{A,C,D} \alpha_{B,C,D}}"] \\
                                                                                                  &  & \text{(VII)}                                           &  &                                                                  \\
((A\oplus B)C)D \arrow[rr, "{\delta^{\#}_{A,B,C} 1_D}"']                                          &  & (AC \oplus BC)D \arrow[rr, "{\delta^{\#}_{AC,BC,D}}"'] &  & (AC)D \oplus (BC)D                                              
\end{tikzcd}

  \end{subfigure}
  \begin{subfigure}{0.99\textwidth}
    \centering

\begin{tikzcd}
A((B\oplus C)D) \arrow[rr, "{1_A\delta^{\#}_{B,C,D}}"] \arrow[dd, "{\alpha_{A,B\oplus C,D}}"'] &  & A(BD \oplus CD) \arrow[rr, "{\delta_{A,BD,CD}}"]       &  & A(BD) \oplus A(CD) \arrow[dd, "{\alpha_{A,B,D} \alpha_{A,C,D}}"] \\
                                                                                               &  & \text{(VIII)}                                          &  &                                                                  \\
(A(B\oplus C))D \arrow[rr, "{\delta_{A,B,C} 1_D}"']                                            &  & (AB \oplus AC)D \arrow[rr, "{\delta^{\#}_{AB,AC,D}}"'] &  & (AB)D \oplus (AC)D                                              
\end{tikzcd}

  \end{subfigure}
\end{figure}
\begin{figure}[H]
  \ContinuedFloat
  \centering
  \begin{subfigure}{0.99\textwidth}
    \centering

\begin{tikzcd}
A(C\oplus D) \oplus B(C\oplus D) \arrow[rr, "{\delta_{A,C,D} \delta_{B,C,D}}"]                               &             & (AC \oplus AD) \oplus (BC \oplus BD) \arrow[d, "{\alpha'_{AC\oplus AD, BC, BD}}"]               \\
                                                                                                             &             & ((AC \oplus AD) \oplus BC)\oplus BD \arrow[d, "{\alpha'^{-1}_{AC,AD,BC} \oplus 1_{BD}}"]        \\
                                                                                                             &             & (AC\oplus (AD\oplus BC))\oplus BD \arrow[dd, "{(1_{AC} \oplus \gamma'_{AD,BC}) \oplus 1_{BD}}"] \\
(A\oplus B)(C\oplus D) \arrow[uuu, "{\delta^{\#}_{A,B,C\oplus D}}"] \arrow[ddd, "{\delta_{A\oplus B,C,D}}"'] & \text{(IX)} &                                                                                                 \\
                                                                                                             &             & (AC \oplus (BC \oplus AD))\oplus BD                                                             \\
                                                                                                             &             & ((AC \oplus BC) \oplus AD) \oplus BD \arrow[u, "{\alpha'^{-1}_{AC, BC, AD} \oplus 1_{BD}}"']    \\
(A\oplus B)C \oplus (A\oplus B)D \arrow[rr, "{\delta^{\#}_{A,B,C} \delta^{\#}_{A,B,D}}"']                    &             & (AC \oplus BC) \oplus (AD \oplus BD) \arrow[u, "{\alpha'_{AC\oplus BC, AD, BD}}"']             
\end{tikzcd}
  \end{subfigure}
  \begin{subfigure}{0.49\textwidth}

    $$\text{(X): } \lambda^*_O = \rho^*_O : O \otimes O \rightarrow O$$
  \end{subfigure}
  \begin{subfigure}{0.49\textwidth}
    \centering

\begin{tikzcd}
O(A \oplus B) \arrow[dd, "\lambda^*_{A \oplus B}"'] \arrow[rr, "{\delta_{O,A,B}}"] &             & OA \oplus OB \arrow[dd, "\lambda^*_A \oplus \lambda^*_B"] \\
                                                                                   & \text{(XI)} &                                                           \\
O                                                                                  &             & O \oplus O \arrow[ll, "\lambda'_O"]                      
\end{tikzcd}

  \end{subfigure}
  \begin{subfigure}{0.49\textwidth}
    \centering


\begin{tikzcd}
(A\oplus B)O \arrow[dd, "\rho^*_{A \oplus B}"'] \arrow[rr, "{\delta^{\#}_{A,B,O}}"] &              & AO \oplus BO \arrow[dd, "\rho^*_A \oplus \rho^*_B"] \\
                                                                                    & \text{(XII)} &                                                     \\
O                                                                                   &              & O \oplus O \arrow[ll, "\lambda'_O"]                
\end{tikzcd}
    
  \end{subfigure}
  \begin{subfigure}{0.49\textwidth}


    $$\text{(XIII): } \rho_O = \lambda^*_O : O I \rightarrow O$$

  \end{subfigure}
  \begin{subfigure}{0.49\textwidth}

    $$\text{(XIV): } \lambda_O = \rho^*_O : I O \rightarrow O$$

  \end{subfigure}
  \begin{subfigure}{0.49\textwidth}
    \centering
    
    

\begin{tikzcd}
O(AB) \arrow[rr, "{\alpha_{O,A,B}}"] \arrow[dd, "\lambda^*_{AB}"'] &              & (OA)B \arrow[dd, "\lambda^*_A 1_B"] \\
                                                                   & \text{(XVI)} &                                     \\
O                                                                  &              & OB \arrow[ll, "\lambda^*_B"]       
\end{tikzcd}

  \end{subfigure}
  \begin{subfigure}{0.49\textwidth}
\centering

\begin{tikzcd}
A(OB) \arrow[rr, "{\alpha_{A,O,B}}"] \arrow[dd, "1_A \lambda^*_B"'] &               & (AO)B \arrow[dd, "\rho^*_A 1_B"] \\
                                                                    & \text{(XVII)} &                                  \\
AO \arrow[rd, "\rho^*_A"']                                          &               & OB \arrow[ld, "\lambda^*_B"]     \\
                                                                    & O             &                                 
\end{tikzcd}
  \end{subfigure}
  \begin{subfigure}{0.49\textwidth}
\centering

\begin{tikzcd}
A(BO) \arrow[rr, "1_A \rho^*_B"] \arrow[dd, "{\alpha_{A,B,O}}"'] &                & AO \arrow[dd, "\rho^*_A"] \\
                                                                 & \text{(XVIII)} &                           \\
(AB)O \arrow[rr, "\rho^*_{AB}"']                                 &                & O                        
\end{tikzcd}
  \end{subfigure}
\end{figure}
\begin{figure}
  \ContinuedFloat
  \centering
  \begin{subfigure}{0.49\textwidth}
\centering

\begin{tikzcd}[column sep=small]
A(O \oplus B) \arrow[rr, "{\delta_{A,O,B}}"] \arrow[dd, "1_A \lambda'_B"'] &              & AO \oplus AB \arrow[dd, "\rho^*_A \oplus 1_{BA}"] \\
                                                                           & \text{(XIX)} &                                                   \\
AB                                                                         &              & O \oplus AB \arrow[ll, "\lambda'_{AB}"]          
\end{tikzcd}
  \end{subfigure}
  \begin{subfigure}{0.49\textwidth}
\centering

\begin{tikzcd}[column sep=small]
(O \oplus B) A \arrow[rr, "{\delta^{\#}_{O,B,A}}"] \arrow[dd, "\lambda'_B 1_A"'] &             & OA \oplus BA \arrow[dd, "\lambda^*_A \oplus 1_{BA}"] \\
                                                                                 & \text{(XX)} &                                                      \\
BA                                                                               &             & O \oplus BA \arrow[ll, "\lambda'_{BA}"]             
\end{tikzcd}
  \end{subfigure}
  \begin{subfigure}{0.49\textwidth}
    \centering

\begin{tikzcd}[column sep=small]
A(B \oplus O) \arrow[rr, "{\delta_{A,B,O}}"] \arrow[dd, "1_A \rho'_B"'] &              & AB \oplus AO \arrow[dd, "1_{AB} \oplus \rho^*_A"] \\
                                                                        & \text{(XXI)} &                                                   \\
AB                                                                      &              & AB \oplus O \arrow[ll, "\rho'_{AB}"]             
\end{tikzcd}

  \end{subfigure}
  \begin{subfigure}{0.49\textwidth}
\centering

\begin{tikzcd}[column sep=small]
(A \oplus O)B \arrow[rr, "{\delta^{\#}_{A,O,B}}"] \arrow[dd, "\rho'_A 1_B"'] &               & AB \oplus OB \arrow[dd, "1_{AB} \oplus \lambda^*_B"] \\
                                                                             & \text{(XXII)} &                                                      \\
AB                                                                           &               & AB \oplus O \arrow[ll, "\rho'_{AB}"]                
\end{tikzcd}

  \end{subfigure}
  \begin{subfigure}{0.55\textwidth}


\begin{tikzcd}
I(A \oplus B) \arrow[rr, "{\delta_{I,A,B}}"] \arrow[rdd, "\lambda_{A \oplus B}"'] &                & IA \oplus IB \arrow[ldd, "\lambda_A \oplus \lambda_B"] \\
                                                                                  & \text{(XXIII)} &                                                        \\
                                                                                  & A \oplus B     &                                                       
\end{tikzcd}

  \end{subfigure}
  \begin{subfigure}{0.55\textwidth}


\begin{tikzcd}
(A \oplus B)I \arrow[rr, "{\delta^{\#}_{A,B,I}}"] \arrow[rdd, "\rho_{A \oplus B}"'] &               & AI \oplus BI \arrow[ldd, "\rho_A \oplus \rho_B"] \\
                                                                                    & \text{(XXIV)} &                                                  \\
                                                                                    & A \oplus B    &                                                 
\end{tikzcd}

  \end{subfigure}
  \label{fig:coherence-axioms}
  \caption{Coherence axioms}
\end{figure}

\section{Axioms of a bimonoidal functor} \label{app:bimonoidal-functor}

\begin{definition} \cite[Def. 2.5.1]{elgueta2020groupoid}
A {\em (strong) bimonoidal functor} between bimonoidal categories $(\C, \cdot^\C, I^\C, \oplus^\C, O^\C )$ and $(\D, \cdot^\D, I^\D, \oplus^\D, O^\D)$ is a functor $F:\C\to\D$, isomorphisms 
$$\varepsilon^\cdot:I^\D \to F(I^\C)\hspace*{1cm}\varepsilon^\oplus:O^\D \to F(O^\C)$$
and natural isomorphisms
$$\mu_{A,B}^\cdot:F(A) \cdot^\D F(B) \to F(A\cdot^\C B)\hspace*{1cm}
\mu_{A,B}^\oplus:F(A) \oplus^\D F(B) \to F(A\oplus^\C B)$$
So that 
$(F,\eta^\cdot,\mu^\cdot)$ is a strong monoidal functor from $(\C, \cdot^\C, I^\C)$ to $(\D, \cdot^\D, I^\D)$,
$(F,\eta^\oplus,\mu^\oplus)$ is a strong monoidal functor from $(\C, \oplus^\C, O^\C)$ to $(\D, \oplus^\D, O^\D)$, where the following diagrams also commute:

$$
\xymatrixcolsep{15mm}
\mbox{\footnotesize
\xymatrix{
FA\cdot^\D F(B \oplus^\C C) \ar[d]_{\mu_{A, B\oplus^\C C}^\cdot} \ar[r]^{1_{FA} \cdot^\D \mu_{B,C}^\oplus} & FA\cdot^\D (FB \oplus^\C FC) \ar[r]^{\delta_{FA,FB,FC}^\D} & FA \cdot^\D FB \oplus^\D FA\cdot^\D FC  \ar[d]^{\mu_{A,B}^\cdot \oplus^\D \mu_{A,C}^\cdot}\\
F(A\cdot^\C (B\oplus^\C D)) \ar[r]_{F(\delta_{A,B,C}^\C)} & F(A\cdot^\C B \oplus^\C A\cdot^\C D) \ar[r]_{\mu_{A\cdot^\C B, A\cdot^{\D} C}^\oplus} & F(A \cdot^\C B) \oplus^\D F(A \cdot^\C C)
}}
$$

$$
\xymatrixcolsep{20mm}
\xymatrix{
FA\cdot^\D FO^\C\ar[d]_{\mu_{A,O^C}^\cdot}  \ar[r]^{1_{FA} \cdot^\D \varepsilon^\oplus} & FA\cdot^\D O^\D \ar[r]^{\hspace*{.6cm}\rho_O^{*,\D}} & O^\D \ar@{=}[d]\\
F(A\cdot^\C O^\C) \ar[r]_{\rho_O^{*,\C}} & FO^C \ar[r]_{\varepsilon^\oplus} & O^\D \\ 
}
$$

$$
\xymatrixcolsep{15mm}
\mbox{\footnotesize
\xymatrix{
F(A\oplus^\C B) \cdot^\D  FC \ar[d]_{\mu_{A\oplus^\C B, C}^\cdot} \ar[r]^{ \mu_{A,B}^\oplus \cdot^\D1_{FC} }  & (FA \oplus^\C FB) \cdot^\D FC \ar[r]^{\delta_{FA,FB,FC}^{\#,\D}} & FA \cdot^\D FC \oplus^\D FB\cdot^\D FC  \ar[d]^{\mu_{A,C}^\cdot \oplus^\D \mu_{B,C}^\cdot}\\
F((A\oplus^\C B)\cdot^\C C))\ar[r]_{F(\delta_{A,B,C}^{\#,\C})} & F(A\cdot^\C C \oplus^\C B\cdot^\C C) \ar[r]_{\mu_{A\cdot^\C C, B\cdot^{\D} C}^\oplus} & F(A \cdot^\C C) \oplus^\D F(B \cdot^\C C)
}}
$$

$$
\xymatrixcolsep{20mm}
\xymatrix{
FO^\C \cdot^\D FA \ar[d]_{\mu_{O^C,A}^\cdot}  \ar[r]^{ \varepsilon^\oplus \cdot^\D 1_{FA} } & O^\D \cdot^\D  FA \ar[r]^{\hspace*{.6cm}\lambda_O^{*,\D}} & O^\D \ar@{=}[d]\\
F(O^\C \cdot^\C  A) \ar[r]_{\lambda_O^{*,\C}} & FO^C \ar[r]_{\varepsilon^\oplus} & O^\D \\ 
}
$$

\end{definition}

\end{document}